\title{Dual Averaging With Non-Strongly-Convex Prox-Functions:\\ New Analysis and Algorithm} 
\author{Renbo Zhao\thanks{Department of Business Analytics, Tippie College of Business, University of Iowa, {renbo-zhao@uiowa.edu}}}
\def\UrlSpecials{\do\~{\kern -.15em\lower .7ex\hbox{~}\kern .04em}} \catcode`~=13 
\newcommand{\tnorm}[1]{{\left\vert\kern-0.25ex\left\vert\kern-0.25ex\left\vert #1 
    \right\vert\kern-0.25ex\right\vert\kern-0.25ex\right\vert}}
\newcommand{\tnormt}[1]{{\vert\kern-0.25ex\vert\kern-0.25ex\vert #1 
    \vert\kern-0.25ex\vert\kern-0.25ex\vert}}
\newcommand{\where}{\;\; \mbox{where}\;\;}
\newcommand{\andd}{\;\; \mbox{and}\;\;}
\newcommand{\ipt}{\lranglet}
\newcommand{\cone}{\mathsf{cone}\,}
\newcommand{\normt}[1]{\Vert#1\Vert}
\newcommand{\abst}[1]{\vert#1\vert}
\newcommand{\nn}{\nonumber}
\newcommand{\prox}{\mathsf{prox}}
\newcommand{\dom}{\mathsf{dom}\,}
\newcommand{\inter}{\mathsf{int}\,}
\newcommand{\bdry}{\mathsf{bd}\,}
\newcommand{\Diag}{\mathsf{Diag}\,}
\newcommand{\cl}{\mathsf{cl}\,}
\newcommand{\ri}{\mathsf{ri}\,}
\newcommand{\aff}{\mathsf{aff}\,}
\newcommand{\lin}{\mathsf{lin}\,}
\newcommand{\epi}{\mathsf{epi}\,}
\newcommand{\conv}{\mathsf{conv}\,}
\newcommand{\diam}{\mathsf{diam}\,}
\newcommand{\ran}{\mathsf{ran}\,}
\newcommand{\dist}{\mathsf{dist}\,}
\newcommand{\calA}{\mathcal{A}}
\newcommand{\calB}{\mathcal{B}}
\newcommand{\calC}{\mathcal{C}}
\newcommand{\calD}{\mathcal{D}}
\newcommand{\calI}{\mathcal{I}}
\newcommand{\calK}{\mathcal{K}}
\newcommand{\calL}{\mathcal{L}}
\newcommand{\calN}{\mathcal{N}}
\newcommand{\calO}{\mathcal{O}}
\newcommand{\calP}{\mathcal{P}}
\newcommand{\calQ}{\mathcal{Q}}
\newcommand{\calR}{\mathcal{R}}
\newcommand{\calS}{\mathcal{S}}
\newcommand{\calU}{\mathcal{U}}
\newcommand{\calV}{\mathcal{V}}
\newcommand{\calW}{\mathcal{W}}
\newcommand{\calX}{\mathcal{X}}
\newcommand{\tilcalQ}{\widetilde{\calQ}} 
\newcommand{\tilcalS}{\widetilde{\calS}} 
\newcommand{\tilcalU}{\widetilde{\calU}}
\newcommand{\barcalA}{\bar{\calA}}
\newcommand{\barcalL}{\bar{\calL}}
\newcommand{\barcalS}{\bar{\calS}} 
\newcommand{\barcalU}{\bar{\calU}}
\newcommand{\rmP}{\mathrm{P}}
\newcommand{\bbR}{\mathbb{R}}
\newcommand{\bbU}{\mathbb{U}}
\newcommand{\bbW}{\mathbb{W}}
\newcommand{\bbX}{\mathbb{X}}
\newcommand{\bbY}{\mathbb{Y}}
\newcommand{\barbbR}{\overline{\bbR}}
\newcommand{\scA}{\mathscr{A}}
\DeclareMathAlphabet{\mathbsf}{OT1}{cmss}{bx}{n}
\newcommand{\rvA}{\mathsf{A}}
\newcommand{\rvM}{\mathsf{M}}
\newcommand{\rvT}{\mathsf{T}}
\newcommand{\tilf}{\tilde{f}}
\newcommand{\tilh}{\tilde{h}}
\newcommand{\hats}{\hat{s}}
\newcommand{\tilu}{\tilde{u}}
\newcommand{\hatx}{\hat{x}}
\newcommand{\tilx}{\widetilde{x}}
\newcommand{\bark}{\bar{k}}
\newcommand{\bars}{\bar{s}}
\newcommand{\baru}{\bar{u}}
\newcommand{\barx}{\bar{x}}
\newcommand{\lranglet}[2]{\langle{#1},{#2}\rangle}
\newcommand{\eqa}{\stackrel{\rm(a)}{=}}
\newcommand{\eqb}{\stackrel{\rm(b)}{=}}
\newcommand{\eqc}{\stackrel{\rm(c)}{=}}
\newcommand{\lea}{\stackrel{\rm(a)}{\le}}
\newcommand{\leb}{\stackrel{\rm(b)}{\le}}
\newcommand{\gea}{\stackrel{\rm(a)}{\ge}}
\newcommand{\geb}{\stackrel{\rm(b)}{\ge}}
\newcommand{\gec}{\stackrel{\rm(c)}{\ge}}
\newcommand{\gsd}{\stackrel{\rm(d)}{>}}
\DeclareMathOperator*{\argmax}{arg\,max}
\DeclareMathOperator*{\argmin}{arg\,min}
\newtheorem{theorem}{Theorem} 
\newtheorem{lemma}{Lemma}
\newtheorem{prop}{Proposition}
\theoremstyle{definition}
\newtheorem{example}{Example} 
\newtheorem{assump}{Assumption} 
\newtheorem{definition}{Definition} 
\theoremstyle{remark}
\newtheorem{remark}{Remark}
\newcommand{\qednew}{\nobreak \ifvmode \relax \else
      \ifdim\lastskip<1.5em \hskip-\lastskip
      \hskip1.5em plus0em minus0.5em \fi \nobreak
      \vrule height0.75em width0.5em depth0.25em\fi}
\numberwithin{equation}{section}
\numberwithin{assump}{section}
\numberwithin{theorem}{section}
\numberwithin{corollary}{section}
\numberwithin{lemma}{section} 
\numberwithin{example}{section}
\numberwithin{remark}{section}
\numberwithin{definition}{section}
\numberwithin{prop}{section}
\newcommand{\Log}{{\rm Log}}
\newcommand{\Etp}{{\rm Etp}}
\begin{document}

\maketitle

\begin{abstract}

{We present new analysis and algorithm of the dual-averaging-type (DA-type) methods for solving the composite convex optimization problem ${\min}_{x\in\bbR^n} \, f(\rvA x) + h(x)$, where $f$ is a convex and globally Lipschitz function,  $\rvA$ is a linear operator, and $h$ is a ``simple'' and convex function that is used as the prox-function in the DA-type methods. We open new avenues of analyzing and developing DA-type methods, by going beyond the canonical setting where the prox-function $h$ is assumed to be strongly convex (on its domain). To that end, we identify two new sets of assumptions on $h$ (and also $f$ and $\rvA$) and show that they hold broadly for many important classes of non-strongly-convex functions. 
Under the first set of assumptions, we show that  the original DA method still has a $O(1/k)$ primal-dual convergence rate. Moreover, we analyze the affine invariance of this method and its convergence rate. Under the second set of assumptions, we develop a new DA-type method with dual monotonicity, and show that it has a $O(1/k)$ primal-dual convergence rate. Finally, we consider the case where $f$ is only convex and Lipschitz on $\calC:=\rvA(\dom h)$, and construct its globally convex and Lipschitz extension based on the Pasch-Hausdorff envelope. Furthermore, we  characterize the sub-differential and Fenchel conjugate of this extension using the convex analytic objects associated with $f$ and $\calC$.
} 

\end{abstract}

\section{Introduction}\label{sec:intro}

Dual averaging (DA)~\cite{Nest_09} is a fundamental algorithm for solving convex nonsmooth optimization problems. 
In this work we are interested in analyzing DA for the following optimization problem:
\begin{equation}
P_*:= {\min}_{x\in\bbX} \;\{P(x):= f(\rvA x) + h(x)\}.\tag*{$(\rmP)$} \label{eq:P}
\end{equation}
In~\ref{eq:P}, $\rvA:\bbX\to\bbY^*$ is a linear operator, where $\bbX := (\bbR^n,\normt{\cdot}_\bbX)$ and $\bbY:= (\bbR^m,\normt{\cdot}_\bbY)$ are 
normed spaces with dual spaces denoted by $\bbX^* := (\bbR^n,\normt{\cdot}_{\bbX,*})$ and $\bbY^*:= (\bbR^m,\normt{\cdot}_{\bbY,*})$, respectively. 
(Throughout this work, we will simply use $\normt{\cdot}$ to denote the norms on $\bbX$ and $\bbY$, and $\normt{\cdot}_*$ to denote the norms on $\bbX^*$ and $\bbY^*$, when no ambiguity arises.) 
In addition, 
\begin{itemize}[leftmargin=*,itemsep=5pt,parsep=0pt,topsep=1pt]
\item $f:\bbY^*\to \bbR$ is a convex and globally $L$-Lipschitz function, namely
\begin{equation}
\abst{f(z)-f(z')}\le L\normt{z-z'}_*, \quad \forall\, z,z'\in \bbY^*.  \label{eq:global_Lips_f}
\end{equation}
We shall assume that a subgradient of $f$ can be easily computed at any point $z\in\bbY^*$, but {\em do not} assume that the structure of $f$ is so ``simple'' such that the proximal 
operator associated with 
$f$, namely $\prox_{f}(z):= \argmin_{z'\in\bbY^*}\; f(z') + (1/2)\normt{z-z'}_2^2$ for $z\in \bbY^*$, can be easily computed. 
\item $h:\bbX\to \barbbR$ (where $\barbbR:= \bbR\cup\{+\infty\}$) is a proper, closed  and convex function that is ``simple'', in the sense that 
for any $u\in \bbX^*$, we can efficiently find an optimal solution (whenever it exists) to the following problem
\begin{equation}
{\min}_{x\in\bbX} \; \ipt{u}{x} + h(x). \label{eq:subprob} 
\end{equation}
Similar to $f$, we do not assume that $\prox_{h}(x)$ 
can be easily computed for some $x\in\bbX$.  (In fact, the optimization problem involved in 
$\prox_{h}(x)$ is harder to solve than the one 
in~\eqref{eq:subprob} in general.\footnote{
Let $\bbX := (\bbR^n,\normt{\cdot}_2)$ be a Euclidean space and $h^*$ be the Fenchel conjugate of $h$. 
Viewed from the perspective of $h^*$, 
solving the problem in~\eqref{eq:subprob} amounts to computing a subgradient of $h^*$ at $-u\in\bbX$, whereas computing $\prox_{h}(-u)$ amounts to computing $\prox_{h^*}(-u)$ (by Moreau decomposition theorem).})
\end{itemize}
In addition, to make~\ref{eq:P} well-posed, we assume that $P_*> -\infty$.  However, 
we do not need to assume that~\ref{eq:P} has  an optimal solution. 

When $h$ is {\em strongly convex} on its domain, denoted by $\dom h:=\{x\in\bbX:h(x)<+\infty\}$, it effectively acts as a ``prox-function''~\cite{Nest_09}.  Some typical examples of $h$ include i) $h(x)= (1/2)\normt{x}_2^2 + \iota_\calC(x)$, where $\calC\ne \emptyset$ is a closed convex set and $\iota_\calC$ denotes its indicator function, and ii) $h(x)= \sum_{i=1}^n x_i\ln x_i - x_i  + \iota_{\Delta_n}(x)$, where $\Delta_n:=\{x\in\bbR^n:\sum_{i=1}^n x_i=1, \,x\ge 0\}$ denotes the unit simplex. Note that in both examples, the sets $\calC$ and $\Delta_n$ are effectively  {the constraint sets of}~\ref{eq:P}. Indeed, in general, any (closed and convex) constraint set of~\ref{eq:P} can be incorporated into $h$ via its indicator function, and as a result, 
$\dom h$ becomes  the (effective) {\em feasible region of}~\ref{eq:P}. 

With the strongly convex prox-function $h$, the DA method for solving~\ref{eq:P} is shown in Algorithm~\ref{algo:DA}.
Throughout this work, we shall choose the two step-size sequences as follows: 
\begin{equation}
\alpha_k = k+1, \quad \beta_k = k(k+1)/2, \quad \forall\,k\ge 0. \tag*{(Step)} \label{eq:step}
\end{equation}
Based on the above choices, in two seminal works, Grigas~\cite[Section~3.3.1]{Grigas_16}  showed that the DA method converges with rate $O(1/k)$, 
and Bach~\cite[Section~3]{Bach_15} analyzed a version of the mirror descent (MD) method for solving~\ref{eq:P}, and obtained similar computational guarantees as those in \cite[Section~3.3.1]{Grigas_16}. In fact, this comes with no coincidence --- by properly choosing the subgradient of $h$ in the definition of the Bregman divergence induced by $h$, one can indeed establish the equivalence between the MD method in~\cite{Bach_15} and the DA method in Algorithm~\ref{algo:DA} (for details, see~\cite[Section~3.4]{Bach_15} and~\cite[Section~4.1]{Grigas_15}).

The strong convexity of the prox-function plays a critical role in analyzing the DA method for convex nonsmooth optimization problems (see e.g.,~\cite{Nest_09,Grigas_16}). In fact, in 
the literature on the DA method (and its variants), strong convexity has become an integrated component in the definition of the prox-function. 
(We review two exceptions in Section~\ref{sec:related_work}.) 
At the same time, the requirement of strong convexity greatly limits the class of prox-functions that one can work with. In fact, there are many simple functions (for which~\eqref{eq:subprob} can be efficiently solved), which naturally arise in various applications, are not strongly convex on their domains, e.g.,  the log-function ${\rm Log}(x):= -\sum_{i=1}^n \ln x_i$ and the 
entropy function ${\rm Etp}(x):= \sum_{i=1}^n x_i\ln x_i - x_i$ (and also ${\rm Log}+\iota_\calC$ and ${\rm Etp}+\iota_\calC$, where $\calC$ is closed and convex but not bounded). 
This naturally leads  to the following intriguing question: 

\vspace{-2.5ex}
\begin{center}
\mbox{\em Does DA enjoy ``good'' convergence rate even if $h$ is non-strongly-convex on its domain?}
\end{center}
\vspace{-1.5ex}

In the first part of this work, we provide an affirmative answer to this question, and show that under relatively mild assumptions on $h$ (and also $f$ and $\rvA$), the DA method in Algorithm~\ref{algo:DA} indeed has similar computational guarantees to the ``canonical'' case, where $h$ is strongly convex. 
To quickly gain a concrete feeling of our results, let us consider the following simple instance of the problem in~\ref{eq:P}:
\begin{equation}
{\min}_{x\in\bbR^n} \; \big\{P(x):= {\max}_{j\in[m]}\, \ipt{A_j}{x}  - \textstyle \sum_{i=1}^n b_i\ln x_i + {\rm Etp}(b) \big\}, 
\label{P-toy}
\end{equation} 
where the data matrix  $A$ is (entry-wise) {\em  positive}, i.e., $A_{ji}>0$ 
for $j\in[m]$ and $i\in[n]$, 
$A_j\in \bbR^n$ denotes the $j$-th row of $A$ for $j\in[m]$ and $b_i\ge 1$ for $i\in[n]$. 
Let $a_i\in\bbR^m$ denote the $i$-th column of $A$ for $i\in[n]$. In fact, the problem~\eqref{P-toy} 
arises as the dual problem of the following problem:
\begin{equation}
-{\min}_{y\in\bbR^m}\; \big\{D(y) := \textstyle-\sum_{i=1}^n b_i\ln(a_i^\top y)+\iota_{\Delta_m}(y)\big\}, \label{D-toy}
\end{equation}
which appears in some instances of the positron emission tomography problem~\cite{BenTal_01}. Our results suggest that the DA method, 
when applied to the problem~\eqref{P-toy} with the simple parameter choices in~\ref{eq:step}, has the following primal-dual convergence rate guarantee. Specifically, define $\barx^k:= (1/\beta_k)\sum_{i=0}^{k-1} \alpha_i x_i$  and $\bars^k:= s^k/\beta_k $ for $k\ge 1$, and we have 
\begin{align}
P(\barx^k) - D(\bars^k)\le \frac{8\max_{j,j'\in[m]}\, \normt{A_j - A_{j'}}_2^2}{\mu (k+1)}, \quad \forall\, k\ge 1,  \label{eq:conv_toy}
\end{align}
for some constant $\mu >0$. At first glance, such a result may seem somewhat surprising, for two reasons. First, the (nonsmooth) objective function in~\eqref{P-toy} is not strongly convex. According to the classical complexity results of convex optimization (see e.g.,~Nemirovski and Yudin~\cite{Nemi_79}), for convex problems without strong convexity and additional structural assumptions 
(e.g., the proximal operator of $f$ is easily computable), one would expect a $O(1/\sqrt{k})$ convergence rate for a first-order method (and especially subgradient method) that solves it. Second, the log-like function $h(x):= -\sum_{i=1}^n b_i\ln x_i$ is used as the prox-function in the DA method, but it is not strongly convex (on its domain), and one would wonder whether the DA method is even well-defined (in Step~\ref{step:proj}) or converges at all, let alone the $O(1/{k})$ convergence rate. So what are the reasons for the ``nice'' convergence rate in~\eqref{eq:conv_toy}? In fact, as we will see later, the mystery of such a result 
can be precisely explained by two reasons: 
i) all of the primal iterates $\{x^k\}_{k\ge 0}$ produced by the DA method {\em automatically} lie in some convex compact set $\bar\calS\subseteq\dom h$, 
and ii) $h$ is strongly convex on $\bar\calS$.  

\begin{algorithm}[t!]
\caption{Dual Averaging for Solving~\ref{eq:P}}\label{algo:DA}
\begin{algorithmic}
\State {\bf Input}: Pre-starting point $x^{-1}\in\bbX$, step-size sequences $\{\alpha_k\}_{k\ge 0}$ and $\{\beta_k\}_{k\ge 0}$ chosen as in~\ref{eq:step} \vspace{-2ex}
\State {\bf Pre-start}: Compute $g^{-1}\in\partial f(\rvA x^{-1})$, $x^0=\argmin_{x\in\bbX}\; \ipt{g^{-1}}{\rvA x} + h(x)$, $s^0=0$ \vspace{1ex} 
\State {\bf At iteration $k\ge 0$}:
\begin{enumerate}[leftmargin = 6ex]
\item Compute   $g^k \in\partial f(\rvA x^k)$
\item $s^{k+1}:= s^k + \alpha_k g^k$
\item \label{step:proj} $x^{k+1}:=  \argmin_{x\in \bbX}\; 
\ipt{s^{k+1}}{\rvA x}+\beta_{k+1} h(x)$ 
\end{enumerate}
\end{algorithmic}
\end{algorithm}

Of course, the two facts above are not specific to the problem in~\eqref{P-toy}. By judiciously exploiting the structure of the dual problem of~\ref{eq:P},  our analysis reveals that they can happen 
fairly generally for the DA method in solving~\ref{eq:P}.  
Specifically, the first fact  holds as long as the domains of $f^*$ and $h^*$ 
 satisfy a certain {\em inclusion condition} (see Assumption~\ref{assum:Q} for details),  
 where 
$f^*$ and $h^*$ denote the Fenchel conjugates of $f$ and $h$, respectively. 
In addition, there exist several broad families of simple convex functions $h$, including $\Log(\cdot)$ and $\Etp(\cdot)$, 
that are strongly convex on any (nonempty) {\em convex compact} set in its domain --- 
the details are elaborated in Section~\ref{sec:assump}. 

The problem class described above extends the scope of the DA method beyond the strongly convex prox-function $h$, however, there are some problems that do not fall within this class. As a simple example, we still consider the problem in~\eqref{P-toy}, but with  (entry-wise) {\em non-negative} data matrix $A$, i.e., $A_{ji}\ge 0$ for $j\in[m]$ and $i\in[n]$, and $A_j\ne 0$ for $j\in[m]$. (In this case, the dual problem~\eqref{D-toy} arises in applications such as optimal expected log-investment~\cite{Cover_84} and  
learning of multivariate-Hawkes processes~\cite{Zhou_13}.)  In fact, one can easily see that in this situation, the DA method in Algorithm~\ref{algo:DA} is not even well-defined --- specifically, the minimization problem in Step~\ref{step:proj} may not even have an optimal solution  (see Remark~\ref{rmk:well_defined} for details).  

This challenging problem motivates the second part of this work, wherein  we identify 
another new problem class that includes the problem~\eqref{P-toy} with {non-negative} data matrix $A$,    and develop a new DA-type algorithm 
to solve it. 
This new algorithm has a  similar structure to Algorithm~\ref{algo:DA} and employs the same step-size sequences in~\ref{eq:step}, but as a key difference, it generates  dual iterates that keep or improve the dual objective value. We conduct a 
geometric analysis of this algorithm, and show that to obtain an $\varepsilon$-primal-dual gap, the number of iterations needed is of order $O(1/\varepsilon)$.


\setlength\parindent{0pt}

\noindent
\subsection{Main Contributions}

At a high level, our contributions can be categorized into in three main aspects. 
\begin{itemize}[leftmargin = 0ex,label={},topsep=0pt,itemsep=5pt]
\item First, we {\em identify two new problem classes of}~\ref{eq:P} that subsume  and go beyond the ``canonical'' setting where  the prox-function $h$ is strong convex.  
We show that the first problem class can still be solved by the original DA method in Algorithm~\ref{algo:DA}, and {\em develop a new DA-type algorithm} 
for solving the second problem class. Our new models on the prox-function $h$ may also be useful in extending the scope of other first-order methods that involve prox-functions. 
\item Second, we {\em conduct convergence rate analyses} for both the original DA method and the new DA-type algorithm, which 
tackle the two aforementioned problem classes, respectively. 
We show that both methods 
converge  at rate $O(1/k)$ in terms of the primal-dual gap. 

\item 
Third, we {\em develop convex analytic results} that provide 
certificates for important classes of 
convex functions  to satisfy our assumptions on $h$, 
which in turn demonstrates the relatively broad scope of our new models on $h$. These results are algorithm-independent and may be of independent interest. 
\end{itemize}

\vspace{1ex}

At a more detailed level, our main contributions are summarized as follows.
\begin{enumerate}[label=(\arabic*),leftmargin=16pt,itemsep=5pt,parsep=0pt,topsep=1pt]
\item  \label{item:DA_contri}
We show that the original  DA method in Algorithm~\ref{algo:DA} has a primal-dual convergence rate of $O(1/k)$ 
when applied to solving~\ref{eq:P}, under two assumptions: (i) the prox-function $h$ is strongly convex on any  nonempty convex compact set inside 
$\dom h$, and (ii) $-\rvA^*(\cl \dom f^*)\subseteq \inter \dom h^*$ (where $\cl$ and $\inter$ stands for closure and interior, respectively), both of which are {\em strictly weaker} than the strong convexity assumption of $h$ (cf.~Lemma~\ref{lem:relax}). In addition, 
we show that the first assumption above (on $h$)  is satisfied broadly by non-strongly-convex functions ---  in particular, it is satisfied by any separable, very strictly convex and Legendre function 
whose Hessian ``blows up'' on the boundary of its domain (cf.~Lemma~\ref{lem:sep}).

\item We develop a new DA-type method in Algorithm~\ref{algo:MDA} for solving~\ref{eq:P} under two assumptions. The first one is the same as assumption (i) in Point~\ref{item:DA_contri} above, and the second one assumes that $\dom h^*$ is open. This new method has a simple structure similar to the original DA method in Algorithm~\ref{algo:DA}, but as a key difference,  it generates  dual iterates that keep or improve the dual objective value. We show that to obtain an $\varepsilon$-primal-dual gap, the number of iterations needed by this new method is of order $O(1/\varepsilon)$ (cf.\ Remark~\ref{rmk:interpret_algo2}). In addition, based on the notion of {\em affine attainment}, we provide certificates to identify important non-strongly-convex functions $h$ such that $\dom h^*$ is open (cf.\ Section~\ref{sec:assump_dom_open}).

\item We provide a detailed discussion on the affine invariance of Algorithm~\ref{algo:DA}  and its convergence rate  analysis. Specifically, we first show that under the bijective affine re-parameterization, the re-parameterized problem still satisfies the two assumptions in Point~\ref{item:DA_contri}, and hence Algorithm~\ref{algo:DA} is well-defined on this problem. We then show that i) Algorithm~\ref{algo:DA} is affine-invariant, and ii) if the norm $\normt{\cdot}_\bbX$ is induced by some set that is intrinsic to~\ref{eq:P}, then the convergence rate analysis of Algorithm~\ref{algo:DA} is also affine-invariant (cf.\ Section~\ref{sec:aff_inv_analysis}).

\item We relax the globally convex and Lipschitz assumptions of $f$, by only assuming that $f$ is convex and $L$-Lipschitz on $\calC:=\rvA(\dom h)$. 
Indeed, by leveraging the notion of Pasch-Hausdorff envelope (see~\cite[Section~12]{Bauschke_11}), we can obtain a globally convex and $L$-Lipschitz extension of $f$, denoted by $F_L$. 
We characterize 
$\partial F_L(z)$ for $z\in \calC$, as well as $F_L^*$ 
and $\dom F_L^*$, in terms of convex analytic objects associated with $f$ and $\calC$. 
These results go beyond the scope of Algorithms~\ref{algo:DA} and~\ref{algo:MDA}, and apply to any ({feasible}) first-order method that requires $f$ 
to be globally convex and Lipschitz. 
 
\end{enumerate}

\noindent
\subsection{Notations}
Let $\bbU:=(\bbR^d, \normt{\cdot})$ be a normed space. 
For a nonempty set $\calU\subseteq \bbU$, we denote its interior, relative interior, boundary, closure, affine hull, convex hull, conic hull and complement by $\inter\calU$, $\ri\calU$, $\bdry\calU$, $\cl \calU$,  $\aff \calU$, $\conv \calU$, $\cone \calU$ and $\calU^c$, respectively. We call $\calU$ solid if $\inter\calU\ne\emptyset$. 
Given two nonempty sets $\calA,\calB\subseteq\bbU$, define 
\begin{equation}
\dist_{\normt{\cdot}}(\calA, \calB):= {\inf} \{\normt{u-u'}: u\in\calA,\; u'\in \calB \}, \label{eq:def_dist}
\end{equation}
and for any $u\in\bbU$, define $$\dist_{\normt{\cdot}}(u, \calB):= {\inf} \{\normt{u-u'}: u'\in\calB \}.$$ 
Given an affine subspace $\calA\subseteq\bbU$, denote the linear subspace associated with $\calA$ by $\lin \calA$, namely $\lin \calA:= \calA - u_0$, for any $u_0\in\calA$. 
Given a linear operator $\rvT:\bbU\to \bbU^*$, denote its adjoint by $\rvT^*:\bbU\to \bbU^*$, namely $\ipt{\rvT u}{u'} = \ipt{\rvT^* u'}{ u}$ for all $u,u'\in\bbU$, and define its operator norm $\normt{\rvT}:= \max_{\normt{u}=1}\; \normt{\rvT u}_*$. If $\rvT$ is self-adjoint (i.e., $\rvT = \rvT^*$), define its minimum eigenvalue $$\lambda_{\min}(\rvT):= {\min}_{\normt{u}=1}\, \ipt{\rvT u}{u}\in\bbR,$$ and  we call $\rvT$ positive definite (denoted by $\rvT\succ 0$) if $\lambda_{\min}(\rvT)>0$. 
With slight overload of notation, we denote a real symmetric and positive definite matrix $X$ by $X\succ 0$. 
For a proper closed convex function $\psi:\bbU\to \barbbR$,  let $\psi^*:\bbU^* \to \barbbR $ denote its Fenchel conjugate, namely
$$\psi^*(w)= {\sup}_{u\in\bbU}\; \ipt{w}{u} - \psi(u).$$ 
In addition, define $\bbR_{++}:=(0,+\infty)$,  $\bbR_+:=[0,+\infty)$ and $\bbR_{--}:=(-\infty,0)$, and  let $e_j\in \bbR^d$ denote the $j$-th standard coordinate vector    
(i.e., the $j$-th column of the identity matrix $I_d$) and $e:= \sum_{j=1}^d e_j$. 
Also, define $$\Delta_d:=\{x\in\bbR^d:e^\top x=1, \,x\ge 0\}.$$

\section{Assumptions and Their Implications} \label{sec:assump}


\noindent 
We introduce the following two assumptions, 
either of which is strictly weaker than the strong convexity assumption of $h$ on its domain (see Lemma~\ref{lem:relax} below). 
 
\begin{assump} \label{assum:h}
For any nonempty convex compact set $\calS \subseteq \dom h$, there exists $\mu_\calS > 0$ such that $h$ is $\mu_\calS$-strongly-convex  on $\calS$ w.r.t.\ $\normt{\cdot}_{\bbX}$, i.e., for all $x,x'\in \calS$ and all $\lambda\in(0,1)$, 
\begin{equation}
h(\lambda x + (1-\lambda) x')\le \lambda h(x) + (1-\lambda) h(x') - \frac{\lambda(1-\lambda)\mu_\calS}{2}\normt{x-x'}_\bbX^2 . 
\end{equation}
For singleton $\calS$, we let $\mu_\calS:=1$. 
  For non-singleton $\calS$, we let $\mu_\calS$ take the tightest value, i.e.,  
\begin{equation}
\mu_\calS: = \inf\left\{\frac{\lambda h( x) + (1-\lambda)h(x')-h((1-\lambda)x'+\lambda x)  }{(\lambda(1-\lambda)/2)\normt{x'-x}_\bbX^2 }: \; x, x'\in\calS,\, x\ne x',\, \lambda\in (0,1)\right\}. 
\label{eq:def_muS}
\end{equation}
\end{assump}

\begin{remark}[Effects of $\normt{\cdot}_\bbX$ on $\mu_\calS$] \label{rmk:choice_norm}
Since all norms are equivalent on finite-dimensional normed spaces, the choice of $\normt{\cdot}_\bbX$ only affects the value of $\mu_\calS$, but not its positivity. In other words, if $h$ satisfies Assumption~\ref{assum:h} under a particular norm on $\bbX$, then it satisfies Assumption~\ref{assum:h} under any other norm on $\bbX$. That said, in Section~\ref{sec:aff_inv_analysis}, we will see that to make $\mu_\calS$ invariant to certain affine re-parameterization of~\ref{eq:P}, it is important that we choose $\normt{\cdot}_\bbX$ in an appropriate way. 
\end{remark}

Assumption~\ref{assum:h} has the following important implications about $h^*:\bbX^*\to\barbbR$, namely the Fenchel conjugate of $h$. 

\begin{lemma} \label{lem:h^*_diff}
Under Assumption~\ref{assum:h}, $\inter\dom h^*\ne \emptyset$ and 
$h^*$ is continuously differentiable on $\inter\dom h^*$. 
In addition, if $\dom h$ is non-singleton, then $h$ is strictly convex on $\dom h$. 
\end{lemma}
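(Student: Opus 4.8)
The plan is to exploit the fundamental duality between strong convexity of $h$ (restricted to compact sets) and differentiability of $h^*$. The first claim, that $\inter\dom h^*\neq\emptyset$, I would establish by contradiction: if $\dom h^*$ had empty interior, it would lie in a proper affine subspace of $\bbX^*$, which (by the standard conjugacy correspondence between lineality directions of $h^*$ and directions of affine behavior of $h$, or equivalently between $\aff\dom h^*$ and the directions in which $h$ has no curvature) would force $h$ to be affine along some line $\ell$. But taking any nondegenerate segment $\calS\subseteq\ell\cap\dom h$ — which is a nonempty convex compact set — Assumption~\ref{assum:h} yields $\mu_\calS>0$, i.e. strict convexity on $\calS$, contradicting affinity. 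One has to be slightly careful when $\dom h$ itself is a singleton or low-dimensional; in that case $h^*$ is (up to an affine term) a support function of a lower-dimensional set and the claim still holds because $\dom h^*=\bbX^*$, so I would dispatch that degenerate case separately at the outset.

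For continuous differentiability of $h^*$ on $\inter\dom h^*$, I would use the following well-known equivalence (see e.g. Rockafellar or Bauschke--Combettes): for a proper closed convex function, $h^*$ is differentiable at $w\in\inter\dom h^*$ if and only if $\partial h^*(w)$ is a singleton, and this holds if and only if the subdifferential map is single-valued there; moreover $\nabla h^*$ is automatically continuous on the interior of the domain whenever it exists throughout that interior. So the work reduces to showing $\partial h^*(w)$ is a singleton for each $w\in\inter\dom h^*$. Suppose $x_1,x_2\in\partial h^*(w)$ with $x_1\neq x_2$. By Fenchel--Young/inversion, $w\in\partial h(x_1)\cap\partial h(x_2)$, and in fact $w\in\partial h(x)$ for every $x$ on the segment $[x_1,x_2]$ (subdifferentials are "monotone-flat" along segments joining two points sharing a common subgradient — this is the standard fact that $h$ restricted to $[x_1,x_2]$ is affine with slope $w$). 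Now $\calS:=[x_1,x_2]$ is a nonempty convex compact subset of $\dom h$, so Assumption~\ref{assum:h} gives $\mu_\calS>0$ and hence strict convexity of $h$ on $[x_1,x_2]$ — again contradicting affinity. Hence $\partial h^*(w)$ is a singleton for all $w\in\inter\dom h^*$, so $h^*$ is differentiable there, and continuity of the gradient follows from the general theory.

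For the last sentence, assume $\dom h$ is non-singleton and suppose toward a contradiction that $h$ fails to be strictly convex on $\dom h$: then there exist distinct $x_1,x_2\in\dom h$ and $\lambda\in(0,1)$ with $h(\lambda x_1+(1-\lambda)x_2)=\lambda h(x_1)+(1-\lambda)h(x_2)$, which by convexity forces $h$ to be affine on the whole segment $[x_1,x_2]$. Taking $\calS=[x_1,x_2]$, a nonempty convex compact subset of $\dom h$, Assumption~\ref{assum:h} supplies $\mu_\calS>0$, and plugging into the strong-convexity inequality at this same $\lambda$ gives a strict inequality, contradiction. Thus $h$ is strictly convex on $\dom h$.

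The main obstacle, and the place I would be most careful, is the passage "two points with a common subgradient $\Rightarrow$ $h$ affine on the segment between them, with that common subgradient as slope." This is the linchpin that converts Assumption~\ref{assum:h} (a statement about arbitrary compact subsets) into a contradiction with a single flat segment. It is elementary — it follows from writing the subgradient inequalities at $x_1$ and $x_2$ against the point $x_\lambda=\lambda x_1+(1-\lambda)x_2$ and adding — but it must be invoked correctly, and one should also confirm that the segment genuinely lies in $\dom h$ (true, since $\dom h$ is convex and contains both endpoints) so that $\mu_\calS$ is defined and positive. Beyond this, the differentiability-from-singleton-subdifferential and automatic-continuity-of-the-gradient facts are purely citations to standard convex analysis, so no real difficulty arises there.
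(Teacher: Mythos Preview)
Your proposal is correct and largely parallels the paper's proof: the strict-convexity claim and the $\inter\dom h^*\ne\emptyset$ claim are handled in essentially the same way (the paper makes the latter fully explicit via biconjugacy, writing $h(x+\lambda d)=h(x)+\lambda\ipt{d}{u_0}$ for $d\perp\lin\aff\dom h^*$, but the underlying idea matches your sketch).

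The one place you and the paper genuinely diverge is the differentiability of $h^*$ on $\inter\dom h^*$. The paper argues \emph{forward}: for $u\in\inter\dom h^*$, the function $h-\ipt{u}{\cdot}$ is coercive (citing~\cite[Fact~2.11]{Bauschke_97}), and together with the already-established strict convexity of $h$ this yields a unique maximizer of $\ipt{u}{\cdot}-h$, hence differentiability of $h^*$ at $u$. You argue \emph{backward}: if $\partial h^*(w)$ contained two distinct points, Fenchel inversion places $w$ in $\partial h$ at both, forcing $h$ to be affine on the joining segment and contradicting Assumption~\ref{assum:h} directly on that segment. Your route is slightly more self-contained in that it avoids the coercivity citation; the paper's route is slightly more constructive in that it exhibits $\nabla h^*(u)$ as the unique argmax. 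Both then appeal to the standard fact (Rockafellar, Theorem~25.5) that differentiability on an open set upgrades automatically to continuous differentiability. One small point you leave implicit: your argument shows $\abst{\partial h^*(w)}\le 1$, and you need $\partial h^*(w)\ne\emptyset$ to conclude it is a singleton; this is immediate from $w\in\inter\dom h^*$, but worth stating.
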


\begin{proof}
The first part of the lemma trivially holds if $\dom h$ is a singleton, in which case $h^*$ is an affine function. Thus we focus on non-singleton $\dom h$, and we first show that in this case, $h$ is strictly convex on $\dom h$. Indeed, for any $x,y\in \dom h$, $x\ne y$,  since $[x,y]$ is convex and compact, by Assumption~\ref{assum:h}, there exists $\mu>0$ such that for all $\lambda \in(0,1)$, 
\begin{equation}
h(\lambda x + (1-\lambda) y)\le \lambda h(x) + (1-\lambda) h(y) - \frac{\lambda(1-\lambda)\mu}{2}\normt{x-y}^2 < \lambda h(x) + (1-\lambda) h(y). \label{eq:strict_conv_h}
\end{equation}
Next, we show that $\inter\dom h^*\ne \emptyset$ by contradiction. Suppose that $\inter\dom h^*= \emptyset$, then its affine hull $\calA  \subsetneqq\bbX^*$. 
Define $\calL:=\lin\calA$, then there exists $ d\in \bbX$ such that $d\ne 0$ and $d\perp \calL$, i.e., $\ipt{d}{u} = 0$ for all $u\in\calL$. Now, fix any $u_0\in\calA$. Since $h$ is closed and convex, for any $x\in \dom h$ and  $\lambda\in\bbR$, 
\begin{align}
h(x+\lambda d) &= {\sup}_{u\in\calA}\; \ipt{x+\lambda d}{u} - h^*(u)\\
& =  {\sup}_{u\in\calA}\; \ipt{x}{u} - h^*(u) + \lambda \ipt{d}{u_0} \label{eq:d_u0}\\
& =  h(x) + \lambda \ipt{d}{u_0},
\end{align}
where~\eqref{eq:d_u0} follows from that $\calA = u_0 + \calL$ and  $d\perp \calL$. 
This implies that for $\lambda\in (0,1)$,
\begin{equation}
h(x+\lambda d)  = \lambda (h(x) + \ipt{d}{u_0}) + (1-\lambda) h(x) = \lambda  h(x+d) + (1-\lambda) h(x),
\end{equation}
contradicting~\eqref{eq:strict_conv_h}.  
Lastly, we show that $h^*$ is continuously differentiable on $\inter\dom h^*$. 
For any $u\in \inter\dom h^*$, from~\cite[Fact 2.11]{Bauschke_97}, we know that $h - \ipt{u}{\cdot}$ is coercive, and with the strict convexity of $h$, we know that $\argmax_{x\in\bbX} \; \ipt{u}{x} - h(x)$ exists and is unique, and hence $h^*$ is differentiable at $u$. In addition, since $h^*$ is proper, closed and convex,   
by~\cite[Theorem 25.5]{Rock_70}, we know that $\nabla h^*$ is continuous on $\inter\dom h^*$. This completes the proof. 
\end{proof}



%
%
\noindent 
Before stating our second assumption, for notational convenience, let us define
\begin{equation}
\calQ:=\cl \dom f^*\quad \andd\quad \calU:=-\rvA^*(\calQ).  \label{eq:def_QU}
\end{equation}

\vspace{.5ex}

\begin{assump} \label{assum:Q}
$\calU\subseteq \inter \dom h^*$. 
\end{assump}

\noindent 
\begin{remark}[Verifying Assumption~\ref{assum:Q}] \label{rmk:verify}
Two remarks are in order. First, in many applications, the convex functions $f$ and $h$ have  relatively simple analytic structures, which enable us to find 
(the domains of) their Fenchel conjugates $f^*$ and $h^*$ relatively easily. 
Take the problem in~\eqref{P-toy} with (entry-wise) positive data matrix $A$ as an example. In this case, since $f(z) = {\max}_{j\in[m]}\, z_j$, $\rvA: x\mapsto Ax$ and $h(x) = - \textstyle \sum_{i=1}^n b_i\ln x_i$, we clearly have $f^*(y):= \iota_{\Delta_m}(y)$ and $h^*(u):= -\sum_{i=1}^n b_i\ln(-u_i)$, and hence 
$\calQ:= \Delta_m$ and $\calU = -\conv\{A_i\}_{i=1}^m\subseteq -\bbR_{++}^n =\dom h^* = \inter \dom h^*$, which verifies Assumption~\ref{assum:Q}. For examples of the Fenchel conjugates of many important convex functions and their calculus rules, we refer readers to~\cite[Chapter~4]{Beck_17b}. 
Second, in some scenarios, we do not need to know $h^*$ in order to find  $\dom h^*$, and in fact, 
finding $\dom h^*$ sometimes can be much easier compared to finding $h^*$ (and the same applies to $f^*$). 
As an important case, consider $h = h_1 + h_2$ 
for some ``simple'' convex functions $h_1$ and $h_2$ such that  $h_1^*$ and $h_2^*$ can be easily found (in closed forms). For example, we can let $h_1:x\mapsto -\sum_{i=1}^n \ln x_i$ and $h_2 := \iota_{\calC}$, where  
$\calC$ is a closed convex set  that satisfies $\calC\cap\bbR_{++}^n\ne \emptyset$.    If $\ri\dom h_1\cap \ri\dom h_2 \ne\emptyset$, then 
we know that $h^* = h_1^* \,\square\, h_2^*$, i.e., the infimal convolution of $h_1$ and $h_2$.  
Note that even if both $h_1^*$ and $h_2^*$ have simple structures, their infimal convolution can often be difficult to compute. 
In contrast, $\dom h^*$ can be easily obtained 
in this case, namely 
$\dom h^* = \dom h_1^* + \dom h_2^*$. For more details and examples, we refer readers to Proposition~\ref{prop:F_L*} and Remark~\ref{rmk:F_L*}. 
\end{remark}

\vspace{.5ex}

\begin{remark}[Well-Definedness of Algorithm~\ref{algo:DA}] \label{rmk:well_defined}
Since $h$ may not be strongly convex on its domain, the minimization problem in Step~\ref{step:proj} of Algorithm~\ref{algo:DA} may not have an optimal solution, making Algorithm~\ref{algo:DA} ill-defined.
It turns out that, on top of Assumption~\ref{assum:h},  if Assumption~\ref{assum:Q} holds, then  Algorithm~\ref{algo:DA} is well-defined for any pre-starting point $x^{-1}\in\bbX$ (see Lemma~\ref{lem:well_posed} in Section~\ref{sec:DA_analysis}). 
On the other hand, if Assumption~\ref{assum:Q} fails, then there exist problem instances on which Algorithm~\ref{algo:DA} is ill-defined. To see this, take the problem in~\eqref{P-toy} as an example, wherein 
the data matrix $A\in\bbR_+^{m\times n}$ satisfy that $a_1 = e_1$ and $A_1=e_1$. 
From Remark~\ref{rmk:verify}, we know that  $\calU = -\conv\{A_i\}_{i=1}^m\not\subseteq -\bbR_{++}^n= \inter \dom h^*$, implying that Assumption~\ref{assum:Q} fails. 
In this case,  if we choose $x^{-1} = e_1$, then $g^{-1} = e_1$ and the problem $\min_{x\in\bbR^n}\; \ipt{A_1}{x} - \textstyle \sum_{i=1}^n b_i\ln x_i$,  whose optimal solution defines $x^0$, has no optimal solution at all. This makes Algorithm~\ref{algo:DA} ill-defined. 
\end{remark}

Assumptions~\ref{assum:h} and~\ref{assum:Q} together  have the following important implications. 

\begin{lemma} \label{lem:compact}
Under Assumptions~\ref{assum:h} and~\ref{assum:Q}, define 
\begin{equation}
\barcalS := \conv(\nabla h^*(\calU)).  \label{eq:def_barS}
\end{equation}
Then $\calQ$, $\calU$ and 
$\barcalS $ are all nonempty, convex and compact, and $\barcalS\subseteq \dom h$.  
Furthermore, 
$h$ is $\mu_{\barcalS}$-strongly-convex on $\barcalS$, where $\mu_{\barcalS}>0$ is defined in~\eqref{eq:def_muS}. 
\end{lemma}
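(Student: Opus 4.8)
The plan is to establish the claims in the natural order: first nonemptiness, then convexity, then compactness, then the inclusion $\barcalS \subseteq \dom h$, and finally the strong convexity statement. For nonemptiness of $\calQ = \cl\dom f^*$: since $f$ is proper closed convex (being real-valued and convex), $\dom f^*\ne\emptyset$, hence $\calQ\ne\emptyset$; then $\calU = -\rvA^*(\calQ)\ne\emptyset$, and by Assumption~\ref{assum:Q} together with Lemma~\ref{lem:h^*_diff} (which gives $\inter\dom h^*\ne\emptyset$ and differentiability of $h^*$ there), $\nabla h^*$ is well-defined on $\calU\subseteq\inter\dom h^*$, so $\nabla h^*(\calU)\ne\emptyset$ and thus $\barcalS\ne\emptyset$. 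Convexity of $\barcalS$ is immediate since it is a convex hull; convexity of $\calQ$ holds because $\dom f^*$ is convex and closure preserves convexity; convexity of $\calU$ follows since $\rvA^*$ is linear and $\calU$ is the image (negated) of a convex set.

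For compactness, the key observation is that $f$ is globally $L$-Lipschitz with respect to $\normt{\cdot}_*$ on $\bbY^*$, which by a standard result (e.g.~\cite[Chapter~4]{Beck_17b} or a direct conjugacy argument) forces $\dom f^* \subseteq \{y\in\bbY: \normt{y}_{\bbY}\le L\}$, a bounded set; hence $\calQ = \cl\dom f^*$ is closed and bounded, therefore compact. Then $\calU = -\rvA^*(\calQ)$ is the continuous image of a compact set, hence compact. For $\barcalS$: since $\calU$ is compact and $\calU\subseteq\inter\dom h^*$, and $\nabla h^*$ is continuous on $\inter\dom h^*$ (Lemma~\ref{lem:h^*_diff}), the image $\nabla h^*(\calU)$ is compact; taking the convex hull of a compact set in finite dimensions yields a compact set (Carathéodory), so $\barcalS$ is compact. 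I would also note here that $\barcalS\subseteq\dom h$: for each $u\in\calU\subseteq\inter\dom h^*$, the point $\nabla h^*(u)$ is the unique maximizer of $\ipt{u}{\cdot}-h$ (as in the proof of Lemma~\ref{lem:h^*_diff}), which in particular lies in $\dom h$; thus $\nabla h^*(\calU)\subseteq\dom h$, and since $\dom h$ is convex, $\barcalS = \conv(\nabla h^*(\calU))\subseteq\dom h$.

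The last claim is then essentially a direct application of Assumption~\ref{assum:h}: $\barcalS$ is a nonempty convex compact subset of $\dom h$, so there exists $\mu_{\barcalS}>0$ with $h$ being $\mu_{\barcalS}$-strongly convex on $\barcalS$, and $\mu_{\barcalS}$ is exactly the quantity defined in~\eqref{eq:def_muS}. I do not expect a serious obstacle here; the one point requiring a little care is the compactness of $\calQ$, where one must invoke the Lipschitz-to-bounded-conjugate-domain fact rather than take it for granted, and the fact that $\barcalS$ being compact uses finite-dimensionality (the convex hull of a compact set need not be compact in infinite dimensions, but here $\bbX^*=\bbR^n$ so Carathéodory applies). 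A secondary subtlety is ensuring $\nabla h^*$ is genuinely defined and continuous on all of $\calU$: this is guaranteed precisely because Assumption~\ref{assum:Q} places $\calU$ inside $\inter\dom h^*$, which is why both assumptions are needed together.
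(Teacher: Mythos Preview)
Your proposal is correct and follows essentially the same route as the paper's own proof: the paper invokes \cite[Corollary~13.3.3]{Rock_70} for the boundedness of $\dom f^*$ from the global Lipschitz property of $f$, uses Lemma~\ref{lem:h^*_diff} for the continuity of $\nabla h^*$ on $\inter\dom h^*$, takes the continuous image of the compact set $\calU$ and then its convex hull, and concludes $\barcalS\subseteq\dom h$ via $\ran\nabla h^*\subseteq\dom h$ and the convexity of $\dom h$. Your write-up is simply a more detailed version of the same argument, with the same dependencies on Assumptions~\ref{assum:h} and~\ref{assum:Q}.
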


\begin{proof}
Since $f$ is convex and globally Lipschitz, $\dom f^*$ is nonempty, convex and bounded (cf.~\cite[Corollary~13.3.3]{Rock_70}). Thus $\calQ=\cl \dom f^*$ is nonempty, convex and compact, and so is $\calU$. 
Since 
$\calU\subseteq \inter \dom h^*$ and $\nabla h^*$ is continuous on $\inter \dom h^*$ (cf.~Lemma~\ref{lem:h^*_diff}), 
we know that $\nabla h^*(\calU)\ne \emptyset$ is compact. As a result, $\calS\ne \emptyset$ is convex and compact. 
Since $\ran \nabla h^* \subseteq \dom h$ (where $\ran \nabla h^*$ denotes the range of $\nabla h^*$), we have $\nabla h^*(\calU)\subseteq \dom h$, and since $\dom h$ is convex, 
we have $\calS\subseteq \dom h$. 
\end{proof}

Lastly, we show that Assumptions~\ref{assum:h} and~\ref{assum:Q} 
are strictly weaker than the strong convexity assumption of $h$. 

\begin{lemma}\label{lem:relax}
If $h$ is strongly convex on its domain, then Assumptions~\ref{assum:h} and~\ref{assum:Q} hold, but the converse may not be true. 
\end{lemma}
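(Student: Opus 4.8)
The plan is to treat the two halves of the statement separately: the forward implication is a routine verification, and for the failure of the converse I will reuse the paper's own running instance~\eqref{P-toy}.

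For the forward direction, suppose $h$ is $\mu$-strongly convex on $\dom h$ with $\mu>0$. Assumption~\ref{assum:h} is then immediate: for any nonempty convex compact $\calS\subseteq\dom h$, the defining inequality holds in particular for all $x,x'\in\calS$ and $\lambda\in(0,1)$, so every difference quotient in~\eqref{eq:def_muS} is at least $\mu$ and hence $\mu_\calS\ge\mu>0$ (resp.\ $\mu_\calS=1>0$ for singletons). For Assumption~\ref{assum:Q}, the key step is to show $\dom h^*=\bbX^*$, which makes $\inter\dom h^*=\bbX^*\supseteq\calU$ trivially; here $\calU=-\rvA^*(\calQ)$ is a well-defined (indeed compact) subset of $\bbX^*$ because $\dom f^*$ is nonempty and bounded (cf.~\cite[Cor.~13.3.3]{Rock_70}), so $\calQ=\cl\dom f^*$ is nonempty and compact. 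To get $\dom h^*=\bbX^*$ I would fix $x_0\in\ri\dom h$ (nonempty since $\dom h$ is nonempty and convex) and $g_0\in\partial h(x_0)$, combine the subgradient inequality at $x_0$ with $\mu$-strong convexity to obtain $h(x)\ge h(x_0)+\ipt{g_0}{x-x_0}+\tfrac\mu2\normt{x-x_0}_\bbX^2$ for all $x\in\bbX$, and then bound
\[
h^*(w)\ \le\ \ipt{w}{x_0}-h(x_0)+\tfrac{1}{2\mu}\normt{w-g_0}_{\bbX,*}^2\ <\ +\infty \qquad \forall\, w\in\bbX^*,
\]
using $\sup_y\{\ipt{w-g_0}{y}-\tfrac\mu2\normt{y}_\bbX^2\}=\tfrac{1}{2\mu}\normt{w-g_0}_{\bbX,*}^2$. (Equivalently one can write $h=q+\tfrac\nu2\normt{\cdot}_2^2$ with $q$ proper closed convex and $\nu>0$ and use $\dom h^*=\dom q^*+\bbX^*=\bbX^*$, or simply invoke~\cite{Bauschke_11}.)

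For the converse, I would exhibit the instance~\eqref{P-toy} with an entrywise positive matrix $A$: take $h=\Log$ (so $\dom h=\bbR_{++}^n$), $f(z)=\max_{j\in[m]}z_j$, and $\rvA:x\mapsto Ax$. As recorded in Remark~\ref{rmk:verify}, $\dom h^*=-\bbR_{++}^n$ (which is open) and $\calU=-\conv\{A_i\}_{i=1}^m\subseteq-\bbR_{++}^n=\inter\dom h^*$, so Assumption~\ref{assum:Q} holds; moreover $\Log$ satisfies Assumption~\ref{assum:h}, since any convex compact $\calS\subseteq\bbR_{++}^n$ is contained in a box $\prod_i[\epsilon_i,M_i]$ with $\epsilon_i>0$, on which $\nabla^2\Log(x)=\diag(x_i^{-2})\succeq\diag(M_i^{-2})\succ0$ (cf.\ also Lemma~\ref{lem:sep}). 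Yet $\Log$ is not strongly convex on $\bbR_{++}^n$, because $x_i^{-2}\to0$ as $x_i\to+\infty$, so no single $\mu>0$ works over the whole domain. This furnishes an $h$ satisfying both assumptions that is not strongly convex on its domain. There is no serious obstacle in this proof; the only points needing a little care are the identity $\dom h^*=\bbX^*$ and verifying Assumption~\ref{assum:h} for $\Log$ without circularly invoking Lemma~\ref{lem:sep}, both handled by the explicit estimates above.
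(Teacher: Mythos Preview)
Your proposal is correct and follows the same approach as the paper's own proof, just with substantially more detail filled in. The paper simply asserts that strong convexity of $h$ ``clearly'' gives Assumption~\ref{assum:h} and $\dom h^*=\bbX^*$ (hence Assumption~\ref{assum:Q}), and cites~\eqref{P-toy} for the counterexample; you have supplied the explicit estimates behind both claims.
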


\begin{proof}
If $h$ is strongly convex on its domain, then clearly i) Assumption~\ref{assum:h} holds  and ii) $\inter \dom h^* = \dom h^* = \bbX^*$ and Assumption~\ref{assum:Q} holds. To see that the converse may not be true, we can simply use~\eqref{P-toy} as a counterexample. 
\end{proof}



\subsection{Certificates for Assumption~\ref{assum:h}} \label{sec:suff_assump_h}
One sufficient condition to ensure that Assumption~\ref{assum:h} holds is shown in 
the following lemma.

\begin{lemma} \label{lem:suff_h}
Let 
$h$ 
be closed, convex and 
twice continuously differentiable on $\inter\dom h\ne\emptyset$. 
Given a nonempty convex compact set $\calS \subseteq \dom h$, if there exists $z\in \inter\dom h$ such that 
\begin{equation}
\kappa_{\calS_z}:={\inf}_{x\in \calS_z^o}\; \lambda_{\min} (\nabla^2 h(x)) > 0,
\label{eq:kappa_S}
\end{equation}
where 
$\calS_z:= \conv(\calS\cup\{z\})$ and $\calS_z^o:=\calS_z\cap\inter\dom h$,  
then $h$ is $\mu_\calS$-strongly convex on $\calS$ and $\mu_\calS\ge \kappa_{\calS_z}$. 
\end{lemma}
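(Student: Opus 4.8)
Abbreviate $\kappa:=\kappa_{\calS_z}$, and assume $\calS$ is non-singleton (otherwise strong convexity on $\calS$ is vacuous). By the definition of $\mu_\calS$ in~\eqref{eq:def_muS}, it will suffice to prove that
\[
h(\lambda x+(1-\lambda)x')\le \lambda h(x)+(1-\lambda)h(x')-\frac{\lambda(1-\lambda)\kappa}{2}\normt{x-x'}_\bbX^2
\]
for every pair of distinct $x,x'\in\calS$ and every $\lambda\in(0,1)$; this forces $\mu_\calS\ge\kappa>0$ and, a fortiori, $\mu_\calS$-strong convexity of $h$ on $\calS$. The obstacle is that $x$ or $x'$ may sit on $\bdry\dom h$, where $h$ is not assumed to be (twice, or even once) differentiable, so one cannot simply integrate the Hessian lower bound along $[x,x']$. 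The plan is to push the chord $[x,x']$ a little way toward the interior point $z$, establish the estimate on the pushed-in chord via~\eqref{eq:kappa_S}, and then let the perturbation vanish.

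So fix distinct $x,x'\in\calS$ and $\lambda\in(0,1)$, set $w:=\lambda x+(1-\lambda)x'\in\calS$, and for $t\in(0,1]$ put $x_t:=(1-t)x+tz$ and $x'_t:=(1-t)x'+tz$. Since $z\in\inter\dom h=\ri\dom h$ (the two coincide because $\inter\dom h\ne\emptyset$) and $x,x'\in\dom h\subseteq\cl\dom h$, the segment principle~\cite[Theorem~6.1]{Rock_70} gives $x_t,x'_t\in\inter\dom h$; and convexity of $\calS_z=\conv(\calS\cup\{z\})$ (which contains $\calS$ and $z$) gives $x_t,x'_t\in\calS_z$. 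Hence $x_t,x'_t\in\calS_z^o$, and since $\calS_z^o=\calS_z\cap\inter\dom h$ is convex (an intersection of convex sets), the whole segment $[x_t,x'_t]$ lies in $\calS_z^o\subseteq\inter\dom h$, where $h$ is $C^2$ with $\lambda_{\min}(\nabla^2 h(\cdot))\ge\kappa$. Applying the standard Taylor/integration argument to $s\mapsto h((1-s)x_t+sx'_t)$ on $[0,1]$ (whose second derivative is at least $\kappa\normt{x'_t-x_t}_\bbX^2$) then yields
\[
h\big(\lambda x_t+(1-\lambda)x'_t\big)\le \lambda h(x_t)+(1-\lambda)h(x'_t)-\frac{\lambda(1-\lambda)\kappa}{2}\normt{x_t-x'_t}_\bbX^2 .
\]
Substituting $x_t-x'_t=(1-t)(x-x')$ and $\lambda x_t+(1-\lambda)x'_t=(1-t)w+tz$ turns this into, for every $t\in(0,1)$,
\[
h\big((1-t)w+tz\big)\le \lambda h\big((1-t)x+tz\big)+(1-\lambda)h\big((1-t)x'+tz\big)-\frac{\lambda(1-\lambda)\kappa}{2}(1-t)^2\normt{x-x'}_\bbX^2 . \qquad (\star)
\]

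Finally I would let $t\downarrow 0$ in $(\star)$. On the left, $(1-t)w+tz\to w\in\dom h$, so closedness (lower semicontinuity) of $h$ gives $\liminf_{t\downarrow 0}h((1-t)w+tz)\ge h(w)$. On the right, the segment $[x,z]$ lies in $\dom h$, so $t\mapsto h((1-t)x+tz)$ is finite and convex on $[0,1]$; convexity bounds its $\limsup$ at $0$ above by $h(x)$ while lower semicontinuity bounds its $\liminf$ at $0$ below by $h(x)$, hence $h((1-t)x+tz)\to h(x)$, and similarly $h((1-t)x'+tz)\to h(x')$. Passing to $\liminf_{t\downarrow 0}$ on both sides of $(\star)$ (and using $(1-t)^2\to 1$) produces exactly the target inequality, completing the argument. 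The one genuinely delicate point is this limiting step, i.e.\ reconciling the boundary of $\dom h$, where smoothness may fail, with its interior, where~\eqref{eq:kappa_S} lives; once the contraction-and-limit device is in place, the rest is bookkeeping.
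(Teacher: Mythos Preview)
Your proposal is correct and follows essentially the same approach as the paper's proof: push the endpoints toward the interior point $z$ so the chord lies in $\calS_z^o$, apply the Hessian lower bound there, and pass to the limit using convexity (for the $\limsup$ on the right) and lower semicontinuity (for the $\liminf$ on the left). The only cosmetic differences are that the paper uses a discrete sequence $\lambda_k\downarrow 0$ in place of your continuous parameter $t\downarrow 0$, and that you spell out explicitly why the full segment $[x_t,x'_t]$ lies in $\calS_z^o$, whereas the paper invokes~\eqref{eq:kappa_S} directly.
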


\begin{proof}
See Appendix~\ref{app:proof_suff_h}. 
\end{proof}

\noindent 
By Lemma~\ref{lem:suff_h}, we immediately 
have the following examples of $h$ that satisfy Assumption~\ref{assum:h}.

\begin{example}\label{eg:sep_h}
The following examples of $h$ satisfy Assumption~\ref{assum:h}:
\begin{itemize}
\item $h(x):= \sum_{i=1}^n -\ln x_i$ for $x>0$
\item $h(x):= \sum_{i=1}^n x_i \ln x_i - x_i$ for $x\ge 0$
\item $h(x):= \sum_{i=1}^n \exp(x_i)$ for $x\in\bbR^n$
\item $h(x):= \sum_{i=1}^n x_i^{-p}$ for $x>0$, where $p > 0$
\item $h(X):= -\ln\det (X)$ for $X\succ 0$
\end{itemize}
\end{example}

\noindent
By definition, 
given an instance of $h$ that satisfies Assumption~\ref{assum:h}, we can generate new instances satisfying Assumption~\ref{assum:h} by incorporating indicator functions of some closed convex sets into it. 

\begin{lemma}
If $h$ satisfies Assumption~\ref{assum:h}, then for any closed convex set $\calC$ such that $\calC \cap \dom h\ne \emptyset$, 
the function $h+\iota_\calC$ is proper, closed and convex, and satisfies Assumption~\ref{assum:h} as well.
\end{lemma}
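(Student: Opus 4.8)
The plan is to dispatch the three structural properties (proper, closed, convex) quickly and then focus on the only substantive point, namely that $h+\iota_\calC$ inherits Assumption~\ref{assum:h}.

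\textbf{Structural properties.} Since $\calC\neq\emptyset$, the indicator $\iota_\calC$ is proper, closed and convex; together with the hypotheses on $h$, convexity of $h+\iota_\calC$ is immediate from convexity of the summands. For properness I would note that $h+\iota_\calC$ never attains $-\infty$ (neither summand does) and that $\dom(h+\iota_\calC)=\dom h\cap\calC\neq\emptyset$ by hypothesis, on which $h+\iota_\calC=h<+\infty$; hence $h+\iota_\calC$ is proper. For closedness the cleanest route is to observe $\epi(h+\iota_\calC)=\epi h\cap(\calC\times\bbR)$, an intersection of two closed sets, so the epigraph is closed.

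\textbf{Inheriting Assumption~\ref{assum:h}.} Write $g:=h+\iota_\calC$, so $\dom g=\dom h\cap\calC$. Let $\calS\subseteq\dom g$ be any nonempty convex compact set. Then $\calS$ is also a nonempty convex compact subset of $\dom h$, so Assumption~\ref{assum:h} applied to $h$ supplies $\mu_\calS>0$ such that $h$ is $\mu_\calS$-strongly convex on $\calS$. The key observation is that $g$ and $h$ coincide on $\calS$: since $\calS\subseteq\calC$ we have $\iota_\calC\equiv 0$ on $\calS$, and since $\calS$ is convex every convex combination of points of $\calS$ again lies in $\calS\subseteq\calC$. Substituting $g=h$ into the defining strong-convexity inequality for $h$ on $\calS$ therefore yields the very same inequality with $h$ replaced by $g$, i.e.\ $g$ is $\mu_\calS$-strongly convex on $\calS$ w.r.t.\ $\normt{\cdot}_\bbX$. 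As this holds for every nonempty convex compact $\calS\subseteq\dom g$, Assumption~\ref{assum:h} holds for $g$; indeed the tightest constant $\mu_\calS$ defined in~\eqref{eq:def_muS} is literally unchanged when passing from $h$ to $g$, because that infimum ranges only over points of $\calS$ and their convex combinations, all of which remain in $\calS$.

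\textbf{Main obstacle.} I anticipate no real obstacle here; the single thing to get right is the observation that intersecting $\dom h$ with $\calC$ does not destroy the ``strong convexity on convex compact subsets'' property, precisely because any convex compact subset of $\dom h\cap\calC$ already lies entirely in $\calC$ and is closed under the convex combinations appearing in the strong-convexity inequality, so the indicator term contributes nothing and the estimate transfers verbatim from $h$.
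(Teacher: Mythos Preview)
Your proposal is correct and matches the paper's approach: the paper states this lemma immediately after noting that it holds ``by definition'' and gives no further proof, so your argument is precisely the natural elaboration the paper leaves to the reader. The key observation you identify --- that any nonempty convex compact $\calS\subseteq\dom(h+\iota_\calC)$ is already a nonempty convex compact subset of $\dom h$ on which $\iota_\calC$ vanishes, so the strong-convexity inequality transfers verbatim --- is exactly the content of ``by definition.''
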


\noindent
{\bf Connections  to the very strictly convex Legendre functions.} 
In fact, all of the examples in Example~\ref{eg:sep_h} fall under the class of ({separable}) very strictly convex  Legendre functions, which  was introduced in~\cite{Bauschke_00}. Let us provide the formal definitions of this class of functions below.  

\begin{definition}[{Legendre  function;~\cite[Definition~2.1]{Bauschke_00}}]
Let $h$ be 
a closed and convex function with $\inter\dom h\ne \emptyset$. We call $h$ {Legendre} if it is 
 i) {\em essentially smooth}, namely   it is (continuously) differentiable on $\inter\dom h$, and furthermore, if $\bdry\dom h\ne \emptyset$, then  for any $\{x^k\}_{k\ge 0}\subseteq \inter\dom h$ such that $x^k\to x\in \bdry\dom h$, we have $\normt{\nabla h(x^k)}_*\to +\infty$,
  and ii)  {\em essentially strictly convex}, namely it is strictly convex on $\inter\dom h$.  
\end{definition}

\noindent
The class of Legendre functions enjoy the following  nice properties. 

\begin{lemma}[{\cite[Theorem~26.5]{Rock_70}}]\label{lem:legendre}
If $h$ is Legendre, so is $h^*$, and $\nabla h:\inter\dom h\to \inter\dom h^*$ is a homeomorphism, whose inverse $(\nabla h)^{-1} = \nabla h^*$. 
\end{lemma}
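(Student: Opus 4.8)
The plan is to decompose the statement into two parts: (I) the Legendre property is self-dual, so that $h^*$ is again Legendre, and (II) granting (I), the gradients $\nabla h$ and $\nabla h^*$ are mutually inverse homeomorphisms between $\inter\dom h$ and $\inter\dom h^*$. Part (II) is the portion I would write out in full; part (I) is the substantive input, and within it the gradient blow-up of $h^*$ on $\bdry\dom h^*$ is the step I expect to be the main obstacle.

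For part (II), I would start from the structural characterization of essential smoothness~\cite[Theorem~26.1]{Rock_70}: if $h$ is essentially smooth then $\partial h(x)=\{\nabla h(x)\}$ for every $x\in\inter\dom h$ and $\partial h(x)=\emptyset$ for every $x\notin\inter\dom h$; in particular $\dom\partial h=\inter\dom h$. Granting part (I), the same holds for $h^*$, i.e.\ $\dom\partial h^*=\inter\dom h^*$ and $\partial h^*(y)=\{\nabla h^*(y)\}$ there. I would then invoke the conjugate subgradient inversion~\cite[Theorem~23.5]{Rock_70}: since $h$ is closed, proper and convex, $y\in\partial h(x)\iff x\in\partial h^*(y)\iff h(x)+h^*(y)=\ipt{x}{y}$. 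Fix $x\in\inter\dom h$ and put $y:=\nabla h(x)$; then $x\in\partial h^*(y)$, hence $\partial h^*(y)\ne\emptyset$, hence $y\in\inter\dom h^*$ (using $\dom\partial h^*=\inter\dom h^*$), and then $\partial h^*(y)=\{\nabla h^*(y)\}$ forces $\nabla h^*(\nabla h(x))=x$. Repeating this with $h$ and $h^*$ interchanged gives $\nabla h(\nabla h^*(y))=y$ for all $y\in\inter\dom h^*$. Thus $\nabla h:\inter\dom h\to\inter\dom h^*$ is a bijection with inverse $\nabla h^*$; both maps are continuous by~\cite[Theorem~25.5]{Rock_70} (a convex function differentiable throughout an open set is continuously differentiable on it --- the fact already used in the proof of Lemma~\ref{lem:h^*_diff}), so $\nabla h$ is a homeomorphism.

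It remains to establish part (I), namely that $h^*$ is essentially smooth and essentially strictly convex; since $h$ is Legendre I may use both of its defining properties. The essential strict convexity of $h^*$ is the easy half: if $h^*$ were affine on a nondegenerate segment $[y_1,y_2]\subseteq\inter\dom h^*$, then supporting $h^*$ at the midpoint produces a single vector $x$ with $x\in\partial h^*(y)$ for every $y\in[y_1,y_2]$, hence $y\in\partial h(x)$ for every such $y$, so $\partial h(x)\supseteq[y_1,y_2]$ contains more than one point, contradicting essential smoothness of $h$ via~\cite[Theorem~26.1]{Rock_70}. Differentiability of $h^*$ on $\inter\dom h^*$ is the dual statement: if $x_1\ne x_2$ both lie in $\partial h^*(y)$ for some $y\in\inter\dom h^*$, then $[x_1,x_2]\subseteq\partial h^*(y)$ and the Fenchel--Young equality makes $h$ affine on $[x_1,x_2]$; essential smoothness of $h$ places $x_1,x_2$ in $\inter\dom h$, and then essential strict convexity of $h$ is violated. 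The genuinely delicate point, and the crux of the argument, is the gradient blow-up for $h^*$: that $\normt{\nabla h^*(y_k)}_*\to+\infty$ whenever $y_k\to y\in\bdry\dom h^*$. I would attempt this by contradiction --- if $x_k:=\nabla h^*(y_k)$ had a bounded subsequence $x_k\to x$, then passing to the limit in $h(x_k)+h^*(y_k)=\ipt{x_k}{y_k}$ via lower semicontinuity of $h$ and $h^*$ gives $h(x)+h^*(y)\le\ipt{x}{y}$, whence $x\in\partial h^*(y)$ and so $y\in\dom\partial h^*$; the task is then to upgrade this to $y\in\inter\dom h^*$, contradicting $y\in\bdry\dom h^*$, and this upgrade is exactly where the full smoothness/strict-convexity duality is needed. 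Since that blow-up claim is precisely the content of Rockafellar's analysis, the efficient route --- and the one I would take in the paper --- is simply to cite~\cite[Theorems~26.1 and~26.3]{Rock_70} for part (I) and present only the self-contained deduction (II) in detail.
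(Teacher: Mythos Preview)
Your proposal is correct, but note that the paper does not actually prove this lemma at all: it is stated purely as a citation to~\cite[Theorem~26.5]{Rock_70} and invoked as a black box. So there is no ``paper's own proof'' to compare against --- the paper simply imports the result.

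That said, your write-up is a sound reconstruction of the argument behind Rockafellar's theorem. The decomposition into (I) self-duality of the Legendre property and (II) the mutual-inverse homeomorphism claim is the natural one, and your execution of (II) via~\cite[Theorems~23.5, 25.5, 26.1]{Rock_70} is clean and correct. You are also right that within (I) the gradient blow-up of $h^*$ at $\bdry\dom h^*$ is the nontrivial step, and that it is most efficiently handled by citing~\cite[Theorem~26.3]{Rock_70} (the duality between essential smoothness and essential strict convexity). Since the paper's intent here is clearly just to quote the result, your level of detail exceeds what is needed; if you were to include anything, a one-line pointer to~\cite[Theorem~26.5]{Rock_70} would match the paper's treatment.
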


\begin{definition}[{Very strictly convex function;~\cite[Definition~2.8]{Bauschke_00}}] \label{def:vsc}
Let $h$ be proper, closed and convex with $\inter\dom h\ne \emptyset$. We call $h$ {very strictly convex}
if it is twice continuously differentiable on $\inter\dom h$ and $\nabla^2 h(x)\succ 0$ for all $x\in\inter\dom h$.
\end{definition}

\begin{remark}[Strict convexity, very strict convexity and Assumption~\ref{assum:h}]  \label{rmk:sc_vsc}
Note that a strictly convex function may not be very strict convex or satisfy Assumption~\ref{assum:h}. A prototypical counterexample would be $h(x):= x^4$ for $x\in\bbR$, since $h''(0) = 0$. 
On the other hand, if $h$ is very strictly convex, it is clearly strictly convex on $\inter\dom h$, but may not be strictly convex on $\bdry\dom h$. To see this, consider $h(s,t):= s^3/t$ with $\dom h = \bbR_+\times \bbR_{++}$ and $h(0,0):= 0$. 
Note that in this case, $h$ does not satisfy Assumption~\ref{assum:h} either. Lastly, if $h$ satisfies Assumption~\ref{assum:h}, it may not be twice differentiable on $\inter\dom h$ and hence not very strictly convex. However,  from the  proof of Lemma~\ref{lem:h^*_diff}, if $\dom h$ is non-singleton, then $h$ is indeed strictly convex (on its domain). 
\end{remark}

From Remark~\ref{rmk:sc_vsc}, we know that very strict convexity does not imply Assumption~\ref{assum:h}. That said, in the special case where $\dom h = \bbX$, the implication  is indeed true. 

\begin{lemma} \label{lem:vsc_legendre_full_domain}
If $h$ is very strictly convex, then it is $\mu_\calS$-strongly convex on any nonempty and compact set $\calS\subseteq \inter\dom h$, and 
\begin{equation}
\mu_\calS\ge \eta_{\calS}:={\min}_{x\in\calS}\; \lambda_{\min}(\nabla^2 h(x))>0. 
\end{equation}
In particular, if $\dom h = \bbX$, then $h$ satisfies Assumption~\ref{assum:h}. 
\end{lemma}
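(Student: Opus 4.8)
\emph{Proof approach.} The plan is to reduce the statement to the one-dimensional restriction of $h$ along line segments and then apply a routine second-order argument. One may take $\calS$ non-singleton, since for a singleton $\calS$ we have $\mu_\calS:=1>0$ by convention and nothing is to be proven. First I would record that $\eta_\calS$ is a well-defined positive number: since $h$ is very strictly convex, $\nabla^2 h$ is continuous on $\inter\dom h$, and $\lambda_{\min}(\cdot)$ is continuous (indeed $1$-Lipschitz w.r.t.\ the operator norm induced by $\normt{\cdot}_\bbX$) on self-adjoint operators, so $x\mapsto\lambda_{\min}(\nabla^2 h(x))$ is continuous on $\inter\dom h$ and strictly positive there (as $\nabla^2 h(x)\succ 0$); since $\calS\subseteq\inter\dom h$ is nonempty and compact, the minimum $\eta_\calS=\min_{x\in\calS}\lambda_{\min}(\nabla^2 h(x))$ is attained and positive.

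Next I would carry out the 1D reduction. Fix $x,x'\in\calS$ with $x\ne x'$ and $\lambda\in(0,1)$. Since $\inter\dom h$ is convex and contains $\calS$, the whole segment $[x',x]$ lies in $\inter\dom h$, so $\phi(t):=h((1-t)x'+tx)$ is twice continuously differentiable on a neighborhood of $[0,1]$ with $\phi''(t)=\ipt{\nabla^2 h((1-t)x'+tx)(x-x')}{x-x'}\ge\lambda_{\min}(\nabla^2 h((1-t)x'+tx))\normt{x-x'}_\bbX^2$. When $\calS$ is convex the point $(1-t)x'+tx$ stays in $\calS$, so $\phi''(t)\ge\eta_\calS\normt{x-x'}_\bbX^2$ for all $t\in[0,1]$; for a general compact $\calS$ the same bound holds with $\eta_\calS$ replaced by the minimum of $\lambda_{\min}(\nabla^2 h(\cdot))$ over the compact set $\conv\calS\subseteq\inter\dom h$, which suffices for what follows since only convex compact sets enter Assumption~\ref{assum:h}. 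Then $g(t):=\phi(t)-\tfrac{\eta_\calS}{2}\normt{x-x'}_\bbX^2 t^2$ satisfies $g''\ge 0$ on $[0,1]$, hence is convex there; evaluating convexity of $g$ at $\lambda=\lambda\cdot 1+(1-\lambda)\cdot 0$ and rearranging gives $h((1-\lambda)x'+\lambda x)\le\lambda h(x)+(1-\lambda)h(x')-\tfrac{\eta_\calS\lambda(1-\lambda)}{2}\normt{x-x'}_\bbX^2$. As $x,x',\lambda$ were arbitrary, $h$ is $\eta_\calS$-strongly convex on $\calS$, so by the tightest-constant definition~\eqref{eq:def_muS}, $\mu_\calS\ge\eta_\calS>0$. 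For the last assertion, if $\dom h=\bbX$ then $\inter\dom h=\bbX$, so every nonempty convex compact $\calS\subseteq\bbX$ meets the hypotheses and has $\mu_\calS\ge\eta_\calS>0$, while a singleton $\calS$ has $\mu_\calS=1>0$; hence Assumption~\ref{assum:h} holds.

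I expect the main obstacle to be merely the domain bookkeeping --- guaranteeing that every connecting segment stays inside $\inter\dom h$, where $h$ is $C^2$ with positive definite Hessian --- together with the routine continuity of $x\mapsto\lambda_{\min}(\nabla^2 h(x))$ that makes $\eta_\calS$ positive and attained; once the bound $\phi''\ge\eta_\calS\normt{x-x'}_\bbX^2$ is in hand, the passage to the strong-convexity inequality is standard and could equally be obtained from the integral form of the second-order Taylor remainder of $\phi$ about $t=\lambda$.
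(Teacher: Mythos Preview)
Your proposal is correct and is simply a fully detailed version of what the paper's proof records in one line as ``follows from Definition~\ref{def:vsc} and the compactness of $\calS$'' (plus $\dom h=\inter\dom h$ when $\dom h=\bbX$). You even go beyond the paper by flagging that the inequality $\mu_\calS\ge\eta_\calS$ as stated really uses convexity of $\calS$ (so the connecting segment stays in $\calS$), a subtlety the paper's terse proof does not address but which is harmless since only convex compact sets enter Assumption~\ref{assum:h}.
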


\begin{proof}
The first part of the lemma follows from Definition~\ref{def:vsc} and the compactness of $\calS$. The second part of the lemma follows from $\dom h=\inter\dom h$ (since $ \dom h = \bbX$). 
\end{proof}

\noindent 
Lemma~\ref{lem:vsc_legendre_full_domain} deals with the case where $\dom h = \bbX$, or equivalently, $\bdry\dom h=\emptyset$. Let us now focus on the case where $\bdry\dom h\ne \emptyset$. 
We call $h$  {\em separable} if $h(x) = \sum_{i=1}^n h_i(x_i)$ for univariate functions $h_i:\bbR\to \barbbR$, $i\in[n]$. The next result shows that if $h$ is a separable, very strictly convex and Legendre function whose Hessian ``blows up'' on the boundary of its domain, then it satisfies Assumption~\ref{assum:h}. 

\begin{lemma} \label{lem:sep}
Let $h$ be separable, very strictly convex and Legendre, 
and 
$\normt{\nabla^2 h(x^k)}\to +\infty$ for any $\{x^k\}_{k\ge 0}\subseteq \inter\dom h$ such that $x^k\to x\in \bdry \dom h\ne \emptyset$. 
Then $h$ satisfies Assumption~\ref{assum:h}. 
\end{lemma}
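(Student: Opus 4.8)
Fix an arbitrary nonempty convex compact set $\calS\subseteq\dom h$; we must produce $\mu_\calS>0$ as in Assumption~\ref{assum:h}. The plan is to route everything through Lemma~\ref{lem:suff_h}: since $h$ is Legendre it is closed and convex, and since it is very strictly convex it is twice continuously differentiable on $\inter\dom h\ne\emptyset$ with $\nabla^2 h\succ 0$ there, so it suffices to exhibit a single point $z\in\inter\dom h$ for which the quantity $\kappa_{\calS_z}$ in~\eqref{eq:kappa_S} is strictly positive, where $\calS_z:=\conv(\calS\cup\{z\})$ and $\calS_z^o:=\calS_z\cap\inter\dom h$. If $\bdry\dom h=\emptyset$, i.e.\ $\dom h=\bbX$, the claim already follows from Lemma~\ref{lem:vsc_legendre_full_domain}, so we may assume $\bdry\dom h\ne\emptyset$. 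By Remark~\ref{rmk:choice_norm} we may take $\normt{\cdot}_\bbX$ to be the Euclidean norm; then by separability $\nabla^2 h(x)=\Diag(h_1''(x_1),\dots,h_n''(x_n))$ for $x\in\inter\dom h$, so that $\lambda_{\min}(\nabla^2 h(x))=\min_i h_i''(x_i)$ and $\normt{\nabla^2 h(x)}=\max_i h_i''(x_i)$.

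First I would record the structural consequences of separability: since $h$ is proper, closed and convex, so is each $h_i$ on $\bbR$; moreover $\dom h=\prod_i\dom h_i$, $\inter\dom h=\prod_i\inter\dom h_i$ (hence each $\inter\dom h_i\ne\emptyset$), and $\cl\dom h=\prod_i\cl\dom h_i$; and very strict convexity forces each $h_i$ to be twice continuously differentiable with $h_i''>0$ on the open interval $\inter\dom h_i$. Next I would extract a one-dimensional version of the blow-up hypothesis: if $t^k\in\inter\dom h_i$ with $t^k\to t^*\in\bdry\dom h_i$, then $h_i''(t^k)\to+\infty$. This follows by applying the given hypothesis to the sequence $y^k$ obtained by putting $t^k$ in coordinate $i$ and freezing all other coordinates equal to those of some fixed $z\in\inter\dom h$: then $y^k\in\inter\dom h$, $y^k\to y^*\in\bdry\dom h$, and $\nabla^2 h(y^k)$ is diagonal with only $h_i''(t^k)$ non-constant, so $\normt{\nabla^2 h(y^k)}\to+\infty$ forces $h_i''(t^k)\to+\infty$.

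Now pick any $z\in\inter\dom h$ and set $\calS_z:=\conv(\calS\cup\{z\})$, a nonempty compact convex subset of $\dom h$; since $z\in\calS_z^o$ we have $\calS_z^o\ne\emptyset$, and $\kappa_{\calS_z}\ge 0$ because $h_i''>0$ on $\inter\dom h_i$. I claim $\kappa_{\calS_z}>0$. If not, there are $x^k\in\calS_z^o$ with $\min_i h_i''(x_i^k)\to 0$; passing to a subsequence we may assume $x^k\to x^*\in\calS_z$ and that the minimizing index is a fixed $i$, so $h_i''(x_i^k)\to 0$ with $x_i^k\in\inter\dom h_i$ and $x_i^k\to x_i^*\in\dom h_i$. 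If $x_i^*\in\inter\dom h_i$, continuity of $h_i''$ there gives $h_i''(x_i^k)\to h_i''(x_i^*)>0$, a contradiction. If instead $x_i^*\in\bdry\dom h_i$ (a finite endpoint), the one-dimensional blow-up from the previous paragraph gives $h_i''(x_i^k)\to+\infty$, again a contradiction. Hence $\kappa_{\calS_z}>0$, and Lemma~\ref{lem:suff_h} yields that $h$ is $\mu_\calS$-strongly convex on $\calS$ with $\mu_\calS\ge\kappa_{\calS_z}>0$. As $\calS$ was arbitrary, $h$ satisfies Assumption~\ref{assum:h}.

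The routine parts are the product formulas for $\dom h$ and its interior and closure, and the elementary compactness/continuity bookkeeping. The one genuinely delicate point is the derivation of the one-dimensional blow-up, and this is precisely where separability is indispensable: freezing all but one coordinate leaves the diagonal structure of $\nabla^2 h$ intact, so the coordinatewise second derivatives can be isolated and the blow-up of $\normt{\nabla^2 h}$ transferred to a single $h_i''$. I do not anticipate any further obstacle beyond verifying carefully that the ``frozen'' sequences land in $\inter\dom h$ and converge to a genuine point of $\bdry\dom h$.
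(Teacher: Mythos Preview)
Your proof is correct and follows essentially the same strategy as the paper's: reduce the Hessian blow-up hypothesis to the one-dimensional statement $h_i''(t^k)\to+\infty$ along sequences approaching $\bdry\dom h_i$, then verify the hypothesis $\kappa_{\calS_z}>0$ of Lemma~\ref{lem:suff_h}. The only notable difference is in how $\kappa_{\calS_z}>0$ is established: the paper builds an auxiliary function $g(x)=\min_{i\in\calI(x)}h_i''(x_i)$ on $\dom h\setminus\{a\}$ (after reducing WLOG to $\dom h_i=[a_i,+\infty)$), argues that $g$ is closed, and uses a sub-level-set compactness argument; you instead argue directly by contradiction, extracting a convergent subsequence and splitting into the two cases $x_i^*\in\inter\dom h_i$ versus $x_i^*\in\bdry\dom h_i$. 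Your route is a bit more elementary and avoids the WLOG normalization of the domains, while the paper's closed-function construction packages both cases into a single lower-semicontinuity statement; the underlying content is the same.
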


\begin{proof}
See Appendix~\ref{app:proof_sep}. 
\end{proof}

\noindent 
Note that all of the examples in Example~\ref{eg:sep_h} satisfy the conditions in Lemma~\ref{lem:sep}, which provides another way for us to see that these examples satisfy Assumption~\ref{assum:h}. 
In general, it is unclear whether the entire class of very strictly convex Legendre functions satisfy Assumption~\ref{assum:h} (and we leave this to future investigation). 
Nevertheless, as long as $h$ is very strictly convex  and Legendre,  
the ``essential'' results in Lemmas~\ref{lem:h^*_diff} and~\ref{lem:compact} still hold under Assumption~\ref{assum:Q} (see Lemma~\ref{lem:vsc_suff} below),  
and as a result, all of our results in the subsequent sections still hold in this case.  
{\em In view of this, $h$ being very strictly convex  and Legendre can be regarded as an alternative assumption to Assumption~\ref{assum:h}.}



\begin{lemma}\label{lem:vsc_suff}
Under Assumption~\ref{assum:Q}, if $h$ is very strictly convex and Legendre, then $\inter\dom h^*\ne \emptyset$ and  $h^*$ is continuously differentiable on $\inter\dom h^*$. In addition, $\barcalS\subseteq \inter\dom h$ is nonempty, convex and compact, and $h$ is $\mu_{\barcalS}$-strongly-convex on $\barcalS$. 
\end{lemma}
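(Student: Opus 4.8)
The plan is to assemble the statement from the Legendre machinery already recorded in Lemma~\ref{lem:legendre}, together with the compactness bookkeeping from the proof of Lemma~\ref{lem:compact}, and to bring in very strict convexity only at the very end through Lemma~\ref{lem:vsc_legendre_full_domain}. The point is that the differentiability of $h^*$ here comes from the Legendre property alone, so the argument parallels Lemma~\ref{lem:h^*_diff} but with a different (simpler) route.

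For the first claim I would not use Assumption~\ref{assum:Q} at all. Since $h$ is Legendre, Lemma~\ref{lem:legendre} gives that $h^*$ is Legendre as well, so in particular $\inter\dom h^*\ne\emptyset$, and that $\nabla h:\inter\dom h\to\inter\dom h^*$ is a homeomorphism with inverse $(\nabla h)^{-1}=\nabla h^*$. A homeomorphism has a continuous inverse, so $\nabla h^*$ is defined and continuous on $\inter\dom h^*$; being also differentiable there, $h^*$ is continuously differentiable on $\inter\dom h^*$.

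For the second claim I would reuse the outer-set part of the proof of Lemma~\ref{lem:compact} essentially verbatim: $f$ convex and globally Lipschitz forces $\dom f^*$ to be nonempty, convex and bounded (cf.~\cite[Corollary~13.3.3]{Rock_70}), hence $\calQ=\cl\dom f^*$ is nonempty, convex and compact, and therefore so is its image $\calU=-\rvA^*(\calQ)$ under the linear map $-\rvA^*$. Now invoke Assumption~\ref{assum:Q}: $\calU\subseteq\inter\dom h^*$, so $\nabla h^*(\calU)$ is well defined, is the continuous image of a compact set — hence nonempty and compact — and, because $\nabla h^*$ maps $\inter\dom h^*$ into $\inter\dom h$ by Lemma~\ref{lem:legendre}, it is contained in $\inter\dom h$. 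Passing to $\barcalS=\conv(\nabla h^*(\calU))$ (cf.~\eqref{eq:def_barS}), nonemptiness and convexity are immediate, compactness follows since the convex hull of a compact subset of $\bbR^n$ is compact, and $\barcalS\subseteq\inter\dom h$ because $\inter\dom h$ is convex and contains $\nabla h^*(\calU)$.

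Finally, $\barcalS$ is a nonempty compact subset of $\inter\dom h$, so applying Lemma~\ref{lem:vsc_legendre_full_domain} with $\calS=\barcalS$ yields that $h$ is $\mu_{\barcalS}$-strongly convex on $\barcalS$ with $\mu_{\barcalS}\ge\eta_{\barcalS}:=\min_{x\in\barcalS}\lambda_{\min}(\nabla^2 h(x))>0$, where $\mu_{\barcalS}$ is the tightest constant of~\eqref{eq:def_muS}. There is no genuine obstacle here; the only points that need a moment of care are (i) that continuity of $\nabla h^*$ on $\inter\dom h^*$ and the inclusion $\nabla h^*(\inter\dom h^*)\subseteq\inter\dom h$ are read off directly from the homeomorphism statement in Lemma~\ref{lem:legendre}, rather than re-derived as in Lemma~\ref{lem:h^*_diff}, and (ii) that very strict convexity is used \emph{only} in the last step, via Lemma~\ref{lem:vsc_legendre_full_domain}, and plays no role in establishing the differentiability of $h^*$.
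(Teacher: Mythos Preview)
Your proposal is correct and follows essentially the same approach as the paper's own proof: both use Lemma~\ref{lem:legendre} to obtain that $h^*$ is Legendre (hence $\inter\dom h^*\ne\emptyset$ and $h^*$ is continuously differentiable there), then combine Assumption~\ref{assum:Q} with the continuity of $\nabla h^*$ and $\ran\nabla h^*=\inter\dom h$ to conclude that $\barcalS\subseteq\inter\dom h$ is nonempty, convex and compact, and finally invoke Lemma~\ref{lem:vsc_legendre_full_domain} for the strong convexity. Your version is slightly more explicit in re-deriving the compactness of $\calQ$ and $\calU$, but the logical skeleton is identical.
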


\begin{proof}
Since $h$ is Legendre, by Lemma~\ref{lem:legendre}, we know that $h^*$ is Legendre, and by definition, $\inter\dom h^*\ne \emptyset$ and $h^*$ is continuously differentiable on $\inter\dom h^*$. 
Since $\calU\subseteq \inter \dom h^*$ and $\nabla h^*$ is continuous on $\inter \dom h^*$,
we know that $\nabla h^*(\calU)\ne \emptyset$ is compact. As a result, $\barcalS$ is nonempty, convex and compact. Since $\ran\nabla h^*=\inter\dom h$, we have $\nabla h^*(\calU)\subseteq \inter\dom h$, and since  $\inter\dom h$ is convex, we have $\barcalS\subseteq \inter\dom h$. 
Since $h$ is very strictly convex, by Lemma~\ref{lem:vsc_legendre_full_domain}, we know that $h$ is $\mu_{\barcalS}$-strongly-convex on $\barcalS$. 
%
\end{proof}

\section{Convergence Rate Analysis of  Algorithm~\ref{algo:DA} } \label{sec:DA_analysis}

For ease of exposition, in this section, 
we will base our analysis of Algorithm~\ref{algo:DA} on Assumptions~\ref{assum:h} and~\ref{assum:Q}. Readers should keep in mind that by Lemma~\ref{lem:vsc_suff}, our analysis still work with  Assumptions~\ref{assum:h} replaced by $h$ being very strictly convex  and Legendre.

To start, let us define 
\begin{equation}
\bars^0 := g^{-1}\quad \andd \quad \bars^k:= s^k/\beta_k, \quad \forall\, k\ge 1.  \label{eq:def_bars}
\end{equation}

\noindent
Note that Step~\ref{step:proj} in Algorithm~\ref{algo:DA} is {\em well-defined} when $h$ is strongly convex (on its domain), namely, 
the minimization problem therein has a unique optimal solution. 
Of course,  this is not the case in general when $h$ is not strongly convex. However, as suggested by the lemma below, under Assumptions~\ref{assum:h} and~\ref{assum:Q}, Algorithm~\ref{algo:DA} is indeed {well-defined}.

\begin{lemma} \label{lem:well_posed}
Under Assumptions~\ref{assum:h} and~\ref{assum:Q}, in Algorithm~\ref{algo:DA}, 
 $\bars^k\in \calQ$ and $x^{k}= \nabla h^*(-\rvA^*\bars^{k})\in\barcalS$ for $k\ge 0$. 
\end{lemma}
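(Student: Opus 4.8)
The plan is to argue by induction on $k$, exploiting the structure of Step~\ref{step:proj} as a conjugate-gradient computation. The key algebraic identity is that Step~\ref{step:proj} computes $x^{k+1} = \argmin_{x\in\bbX}\,\ipt{s^{k+1}}{\rvA x} + \beta_{k+1} h(x)$, which (after dividing by $\beta_{k+1}$) is the same as $x^{k+1} = \argmin_{x\in\bbX}\, \ipt{\rvA^*\bars^{k+1}}{x} + h(x)$, i.e.\ $x^{k+1}$ is a maximizer of $\ipt{-\rvA^*\bars^{k+1}}{x} - h(x)$. So I first need to establish that $-\rvA^*\bars^{k+1} \in \inter\dom h^*$, because then by Lemma~\ref{lem:h^*_diff} the maximizer exists, is unique, and equals $\nabla h^*(-\rvA^*\bars^{k+1})$; this simultaneously shows Step~\ref{step:proj} is well-defined. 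The same reasoning applies to the pre-start computation of $x^0$, which is $\argmin_{x}\,\ipt{g^{-1}}{\rvA x} + h(x) = \argmin_x\, \ipt{\rvA^* g^{-1}}{x} + h(x)$, so it suffices to have $-\rvA^* g^{-1} \in \inter\dom h^*$, i.e.\ $-\rvA^*\bars^0 \in \inter\dom h^*$ by the definition $\bars^0 = g^{-1}$.

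The heart of the argument is therefore the claim $\bars^k \in \calQ = \cl\dom f^*$ for all $k\ge 0$; combined with Assumption~\ref{assum:Q} (which says $-\rvA^*(\calQ)\subseteq\inter\dom h^*$, i.e.\ $\calU\subseteq\inter\dom h^*$), this gives $-\rvA^*\bars^k\in\inter\dom h^*$, hence $x^k = \nabla h^*(-\rvA^*\bars^k)$ is well-defined, and moreover $x^k \in \nabla h^*(\calU)\subseteq\barcalS$ since $-\rvA^*\bars^k \in \calU$ and $\barcalS = \conv(\nabla h^*(\calU))$. To prove $\bars^k\in\calQ$: for $k=0$, $\bars^0 = g^{-1}\in\partial f(\rvA x^{-1})$, and any subgradient of $f$ at a point lies in $\dom f^* \subseteq \calQ$ (indeed $g\in\partial f(z)$ iff $z\in\partial f^*(g)$, which forces $g\in\dom f^*$). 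For the inductive step, $\bars^{k+1} = s^{k+1}/\beta_{k+1} = (s^k + \alpha_k g^k)/\beta_{k+1} = (\beta_k \bars^k + \alpha_k g^k)/\beta_{k+1}$; with the step-size choice~\ref{eq:step} one checks $\beta_{k+1} = \beta_k + \alpha_k$ (since $\tfrac{(k+1)(k+2)}{2} = \tfrac{k(k+1)}{2} + (k+1)$), so $\bars^{k+1}$ is a convex combination of $\bars^k$ and $g^k$. By the induction hypothesis $\bars^k\in\calQ$, and $g^k\in\partial f(\rvA x^k)\subseteq\dom f^*\subseteq\calQ$; since $\calQ$ is convex, $\bars^{k+1}\in\calQ$. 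One subtlety: to even speak of $g^k\in\partial f(\rvA x^k)$ one needs $x^k$ to be defined, which is why the induction must carry the well-definedness of $x^k$ (equivalently $-\rvA^*\bars^k\in\inter\dom h^*$) alongside $\bars^k\in\calQ$; the two statements are proved together in a single induction.

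The main obstacle — really the only non-routine point — is the special case $k=0$ in which $\beta_0 = 0$, so $\bars^0 = s^0/\beta_0$ is not literally defined by the formula $s^k/\beta_k$; this is exactly why the excerpt sets $\bars^0 := g^{-1}$ by hand in~\eqref{eq:def_bars}, and one should note that with this convention the recursion $\bars^{k+1} = (\beta_k\bars^k + \alpha_k g^k)/\beta_{k+1}$ still holds for $k=0$ because $s^0 = 0$ forces $s^1 = \alpha_0 g^0$ and $\beta_1 = \alpha_0 = 1$, giving $\bars^1 = g^0$, consistent with the convex-combination reading once we read "$\beta_0\bars^0$" as $0\cdot g^{-1} = 0$. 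So the induction goes through cleanly; everything else is bookkeeping. I would also remark that the conclusion $x^k\in\barcalS$ together with Lemma~\ref{lem:compact} (which gives $\barcalS$ convex compact, $\barcalS\subseteq\dom h$, and $h$ strongly convex on $\barcalS$) is what later lets the convergence analysis treat $h$ as if it were strongly convex — but that is beyond the present lemma.
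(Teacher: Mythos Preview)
Your proposal is correct and follows essentially the same approach as the paper's proof: an induction on $k$ showing $\bars^k\in\calQ$ (base case via $g^{-1}\in\partial f(\rvA x^{-1})\subseteq\dom f^*$, inductive step via the convex-combination identity $\bars^{k+1} = \tfrac{\beta_k}{\beta_{k+1}}\bars^k + \tfrac{\alpha_k}{\beta_{k+1}}g^k$ enabled by $\beta_{k+1}=\beta_k+\alpha_k$), then invoking Assumption~\ref{assum:Q} and Lemma~\ref{lem:h^*_diff} to conclude $x^k=\nabla h^*(-\rvA^*\bars^k)\in\barcalS$. Your handling of the $\beta_0=0$ edge case and the observation that well-definedness of $x^k$ must be carried through the induction are both correct and match the paper's reasoning.
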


\begin{proof}
Note that by the definition of $\{\bars^k\}_{k\ge 0}$,  
we have 
\begin{equation}
x^{k}:=  {\argmin}_{x\in \bbX}\; \ipt{\bars^{k}}{\rvA x} + h(x), \qquad \forall\, k\ge 0.  \label{eq:def_x_bars}
\end{equation}
Let us prove Lemma~\ref{lem:well_posed} by induction. When $k=0$, $\bars^0=  g^{-1} \in \partial f(\rvA x^{-1}),$ 
which implies that $\bars^0\in \dom f^*\subseteq \calQ$. By Assumption~\ref{assum:Q}, we know that $-\rvA^*\bars^{0}\in\calU\subseteq \inter\dom h^*$. By Lemma~\ref{lem:h^*_diff}, we know that $h^*$ is differentiable at $-\rvA^*\bars^{0}$, and by~\eqref{eq:def_x_bars}, 
we know that $x^{0} = \nabla h^*(-\rvA^*\bars^{0}) \in\barcalS$. Now, suppose that $\bars_k\in \calQ$ and $x^{k}= \nabla h^*(-\rvA^*\bars^{k})\in\barcalS$ for some $k\ge 0$. Note that in~\ref{eq:step}, we have 
\begin{equation}
\beta_{k+1} = \beta_{k} + \alpha_k, \quad \forall\, k\ge 0,  \label{eq:beta_alpha}
\end{equation}
and hence for all $k\ge 0$, 
\begin{equation}
\bars^{k+1} = \frac{s^{k+1}}{\beta_{k+1}} = \frac{s^k + \alpha_k g^k}{\beta_{k} + \alpha_k} = \frac{\beta_{k}  }{\beta_{k} + \alpha_k} \bars^k + \frac{\alpha_k }{\beta_{k} + \alpha_k} g^k. 
\end{equation}
Since $g^k\in\calQ$ and $\bars^k\in\calQ$, and $\calQ$ is convex, we know that $\bars^{k+1} \in\calQ$.
 Repeating the same argument above, 
we know that $x^{k+1} = \nabla h^*(-\rvA^*\bars^{k+1}) \in\barcalS$. 
\end{proof}

Next, we provide primal-dual convergence rate of the DA method in Algorithm~\ref{algo:DA}. To that end, let us write down the (Fenchel-Rockefeller) dual problem of~\ref{eq:P}:  
\begin{equation}
-D_* := -{\min}_{y\in\bbY} \;\,\{D(y):=  h^*( - \rvA^*y) + f^*(y)\}.\tag*{(D)} \label{eq:D}
\end{equation}

\begin{theorem}\label{thm:pdgap}
In Algorithm~\ref{algo:DA}, 
define  
\begin{equation}
\barx^k:= (1/\beta_k)\textstyle\sum_{i=0}^{k-1} \alpha_i x_i \quad \andd\quad  \tilx^k\in \argmin_{x\in\{x_0, \ldots\,x_{k-1}\}} P(x),\quad \forall \,  k\ge 1.  \label{eq:def_barx_tilx}
\end{equation}
Under  Assumptions~\ref{assum:h} and~\ref{assum:Q}, 
for any pre-starting point $x^{-1}\in \bbX$,  we have 
\begin{align}
&\max\{P(\barx^k) + D(\bars^k),  P(\tilx^k)+D(\bars^k)\}\le \frac{8\diam_{\normt{\cdot}_*}(\calU)^2}{\mu_{\barcalS}(k+1)}, \quad \forall \,  k\ge 1, \label{eq:comp_guarantee_DA}  \\
&\qquad \where\quad \diam_{\normt{\cdot}_*}(\calU):= \textstyle\max_{u,u'\in\calU}\, \normt{u-u'}_*=  \max_{y,y'\in\calQ}\, \normt{\rvA^* (y-y')}_*. \label{eq:def_diamU}
\end{align}
\end{theorem}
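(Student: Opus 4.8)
First I would set up the standard dual-averaging potential, being careful that every strong-convexity inequality is invoked only on the compact set $\barcalS$, where $h$ is strongly convex (Lemma~\ref{lem:compact}) and where, by Lemma~\ref{lem:well_posed}, all iterates $x^k$ lie. Introduce the affine model $\ell_k(x):=\sum_{i=0}^{k-1}\alpha_i\big(f(\rvA x^i)+\ipt{g^i}{\rvA x-\rvA x^i}\big)$, so that $\nabla\ell_k=\rvA^*s^k$, and the potential $\psi_k:=\min_{x\in\bbX}\{\ell_k(x)+\beta_k h(x)\}$, which by Lemma~\ref{lem:well_posed} is attained at $x^k=\nabla h^*(-\rvA^*\bars^k)$ for $k\ge1$. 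The first ingredient is the upper bound $\psi_k\le-\beta_k D(\bars^k)$: the Fenchel--Young equality $f(\rvA x^i)-\ipt{g^i}{\rvA x^i}=-f^*(g^i)$ (which also certifies $g^i\in\dom f^*$) gives $\ell_k(x)=\ipt{s^k}{\rvA x}-\sum_i\alpha_i f^*(g^i)$; carrying out the minimization over $x$ (finite and attained since $-\rvA^*\bars^k\in\inter\dom h^*$, cf.\ Lemma~\ref{lem:h^*_diff}) yields $\psi_k=-\sum_i\alpha_i f^*(g^i)-\beta_k h^*(-\rvA^*\bars^k)$, and convexity of $f^*$ together with $\sum_i\alpha_i=\beta_k$ gives $\sum_i\alpha_i f^*(g^i)\ge\beta_k f^*(\bars^k)$, hence $\psi_k\le-\beta_k\big(f^*(\bars^k)+h^*(-\rvA^*\bars^k)\big)=-\beta_k D(\bars^k)$.

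The second and main ingredient is the one-step recursion $\psi_{k+1}\ge\psi_k+\alpha_k P(x^k)-\alpha_k^2\diam_{\normt{\cdot}_*}(\calU)^2/(2\beta_k\mu_{\barcalS})$ for $k\ge1$. Using $\ell_{k+1}(x)=\ell_k(x)+\alpha_k\big(f(\rvA x^k)+\ipt{g^k}{\rvA x-\rvA x^k}\big)$ and $\beta_{k+1}=\beta_k+\alpha_k$, I would split $\psi_{k+1}=\big[\ell_k(x^{k+1})+\beta_k h(x^{k+1})\big]+\alpha_k\big[f(\rvA x^k)+\ipt{g^k}{\rvA x^{k+1}-\rvA x^k}+h(x^{k+1})\big]$. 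For the first bracket, $\ell_k+\beta_k h$ is $\beta_k\mu_{\barcalS}$-strongly convex on $\barcalS$, its global minimizer $x^k$ lies in $\barcalS$ and so does $x^{k+1}$, so $\ell_k(x^{k+1})+\beta_k h(x^{k+1})\ge\psi_k+\tfrac{\beta_k\mu_{\barcalS}}{2}\normt{x^{k+1}-x^k}_\bbX^2$ (the ``strong-convexity below the minimum'' estimate, valid on $\barcalS$). For the second bracket, write $f(\rvA x^k)+h(x^k)=P(x^k)$ and bound $h(x^{k+1})-h(x^k)\ge\ipt{-\rvA^*\bars^k}{x^{k+1}-x^k}$ from $-\rvA^*\bars^k\in\partial h(x^k)$ (first-order optimality in~\eqref{eq:def_x_bars}); collecting the linear terms into $\ipt{\rvA^*(g^k-\bars^k)}{x^{k+1}-x^k}$ and applying Young's inequality $\tfrac{\beta_k\mu_{\barcalS}}{2}\normt{d}_\bbX^2+\ipt{v}{d}\ge-\normt{v}_*^2/(2\beta_k\mu_{\barcalS})$, the claim follows once we note $\normt{\rvA^*(g^k-\bars^k)}_*\le\diam_{\normt{\cdot}_*}(\calU)$, which holds since $g^k,\bars^k\in\calQ$ (Lemma~\ref{lem:well_posed}) and $\calU=-\rvA^*\calQ$. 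The base case $k=0$ needs separate treatment because $\beta_0=0$ supplies no curvature: here I would write $\psi_1=P(x^0)+\ipt{\rvA^*(g^0-\bars^0)}{x^1-x^0}+\big(h(x^1)-h(x^0)-\ipt{-\rvA^*\bars^0}{x^1-x^0}\big)$, use $-\rvA^*\bars^0=-\rvA^*g^{-1}\in\partial h(x^0)$ from the pre-start together with $\mu_{\barcalS}$-strong convexity of $h$ on $\barcalS$ (both $x^0,x^1\in\barcalS$) to extract $\tfrac{\mu_{\barcalS}}{2}\normt{x^1-x^0}_\bbX^2$, and conclude via Young's inequality that $\psi_1\ge\alpha_0 P(x^0)-\alpha_0^2\diam_{\normt{\cdot}_*}(\calU)^2/(2\beta_1\mu_{\barcalS})$.

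Finally I would telescope. For $k\ge1$, adding the base case to the recursion gives $\psi_k\ge\sum_{i=0}^{k-1}\alpha_i P(x^i)-\tfrac{\diam_{\normt{\cdot}_*}(\calU)^2}{2\mu_{\barcalS}}\big(\tfrac{\alpha_0^2}{\beta_1}+\sum_{i=1}^{k-1}\tfrac{\alpha_i^2}{\beta_i}\big)$. Substituting~\ref{eq:step} gives $\tfrac{\alpha_0^2}{\beta_1}=1$ and $\tfrac{\alpha_i^2}{\beta_i}=\tfrac{2(i+1)}{i}$, so the bracket equals $E_k:=2k-1+2\sum_{i=1}^{k-1}\tfrac1i$, and since $\sum_{i=1}^{k-1}\tfrac1i\le k-1$ we get $E_k\le 4k-3\le 8k=16\beta_k/(k+1)$ for every $k\ge1$. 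Combining with $\psi_k\le-\beta_k D(\bars^k)$ and with $\sum_{i=0}^{k-1}\alpha_i P(x^i)\ge\beta_k P(\barx^k)$ (convexity of $P$ and $\sum_i\alpha_i=\beta_k$) as well as $\sum_{i=0}^{k-1}\alpha_i P(x^i)\ge\beta_k P(\tilx^k)$ (definition of $\tilx^k$), and dividing by $\beta_k$, yields that both $P(\barx^k)+D(\bars^k)$ and $P(\tilx^k)+D(\bars^k)$ are at most $\diam_{\normt{\cdot}_*}(\calU)^2 E_k/(2\mu_{\barcalS}\beta_k)\le 8\diam_{\normt{\cdot}_*}(\calU)^2/(\mu_{\barcalS}(k+1))$, which is~\eqref{eq:comp_guarantee_DA}; the second equality in~\eqref{eq:def_diamU} is immediate from $\calU=-\rvA^*\calQ$.

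The step I expect to be the main obstacle is the bookkeeping that confines all strong-convexity inequalities to $\barcalS$: at each step this forces an appeal to Lemma~\ref{lem:well_posed} to certify that $x^k,x^{k+1}\in\barcalS$ and $g^k,\bars^k\in\calQ$, and it is precisely why the $k=0$ step — where $\beta_0 h$ contributes no curvature — must instead extract curvature from the difference $h(x^1)-h(x^0)$ on $\barcalS$. A secondary, purely arithmetic point is checking that the step-size choice in~\ref{eq:step} makes the accumulated constant at most $8$.
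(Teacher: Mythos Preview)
Your proof is correct and follows the same potential-function skeleton as the paper: define $\psi_k$, upper-bound it by $-\beta_k D(\bars^k)$ via Fenchel--Young and convexity of $f^*$, lower-bound it via a one-step recursion using strong convexity on $\barcalS$, and telescope. The one technical difference lies in the recursion. You extract only $\beta_k\mu_{\barcalS}$ curvature from $\ell_k+\beta_k h$ and apply a bare subgradient inequality for $h$ in the $\alpha_k$-bracket, which yields the error term $\alpha_k^2/(2\beta_k\mu_{\barcalS})$ and forces a separate $k=0$ analysis (since $\beta_0=0$). The paper instead also applies the $\mu_{\barcalS}$-strong-convexity inequality~\eqref{eq:h_sc} to the increment $(\beta_{k+1}-\beta_k)h=\alpha_k h$, so the total curvature becomes $(\beta_k+\alpha_k)\mu_{\barcalS}=\beta_{k+1}\mu_{\barcalS}$ and the error term is governed by $\alpha_k^2/\beta_{k+1}$; since $\beta_{k+1}\ge 1$ for every $k\ge 0$, the recursion holds uniformly from $k=0$ and the obstacle you flag disappears. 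Your accumulated error $E_k$ picks up a harmonic piece $2\sum_{i=1}^{k-1}1/i$ absent from the paper's sum $\sum_{i=0}^{k-1}\alpha_i^2/\beta_{i+1}\le 2k$, but your bound $E_k\le 4k-3\le 8k$ still lands on the stated constant~$8$, so both arguments reach~\eqref{eq:comp_guarantee_DA}.
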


\begin{proof}
We adopt the convention that empty sum equals zero. 
For $k\ge 0$, define 
\begin{equation}
\psi_k(x):= \textstyle\sum_{i=0}^{k-1} \alpha_i (f(\rvA x_i) + \ipt{g_i}{\rvA (x-x_i)}) + \beta_k h(x) \quad \andd \quad \psi_k^*:= {\min}_{x\in\bbX}\,\psi_k(x).  \label{eq:psi_k} 
\end{equation}
Note that $\psi_0 \equiv 0$ and for $k\ge 0$, we have 
\begin{equation}
x^k \in {\argmin}_{x\in\bbX}\; \psi_k(x) \quad\Longrightarrow \quad  \psi_k^*=\psi_k(x^k). 
\end{equation}
%
In addition, since $h$ is $\mu_{\barcalS}$-strongly convex on $\barcalS$, $\psi_k$ is $(\beta_k\mu_{\barcalS})$-strongly convex on $\barcalS$, for all $k\ge 0$. As a result, 
 for all $x\in\barcalS$, we have 
\begin{align}
\psi_k(x) &\ge \psi_k(x^k) + ({\beta_k\mu_{\barcalS}}/{2})\normt{x-x^k}^2,\label{eq:psik_sc} \\
h(x)&\ge h(x^k) + \ipt{ - \rvA^*\bars^{k}}{ x-x^k} + ({\mu_{\barcalS}}/{2})\normt{x-x^k}^2, \label{eq:h_sc}
\end{align}
where~\eqref{eq:h_sc} follows from $- \rvA^*\bars^{k}\in \partial h(x^k)$ (since $x^{k}= \nabla h^*(-\rvA^*\bars^{k})$ by Lemma~\ref{lem:well_posed}). 
 Now, for all $k\ge 0$ and all $x\in\barcalS$, we have 
\begin{align}
\psi_{k+1}(x) &= \psi_k(x) + \alpha_k (f(\rvA x^k) + \ipt{g^k}{\rvA (x-x^k)}) + (\beta_{k+1} - \beta_k) h(x)\\
&\gea \psi_k(x^k) + ({\beta_k\mu_{\barcalS}}/{2})\normt{x-x^k}^2  + \alpha_k f(\rvA x^k) + \alpha_k\ipt{g^k}{\rvA (x-x^k)} \nn\\
&\hspace{8em} + \alpha_k (h(x^k) - \ipt{\bars^{k}}{\rvA (x-x^k)} + ({\mu_{\barcalS}}/{2})\normt{x-x^k}^2)   \\
&\ge \psi_k(x^k) + \alpha_k P(x^k) + ({\beta_{k+1}\mu_{\barcalS}}/{2})\normt{x-x^k}^2  +  \alpha_k\ipt{g^k-\bars^{k}}{\rvA (x-x^k)}\\
&\geb \psi_k(x^k) + \alpha_k P(x^k)  - \frac{2\alpha_k^2}{{\beta_{k+1}\mu_{\barcalS}}} \normt{\rvA^*(g^k-\bars^{k})}_*^2\\
&\gec \psi_k(x^k) + \alpha_k P(x^k)  - \frac{2\alpha_k^2}{{\beta_{k+1}\mu_{\barcalS}}} \diam_{\normt{\cdot}_*}(\calU)^2, \label{eq:telescope}
\end{align}
where (a) follows from~\eqref{eq:psik_sc},~\eqref{eq:h_sc} and~\ref{eq:step},  
(b) follows from Young's inequality and (c) follows from that both $g^k,\bars^{k}\in\calQ$ for $k\ge 0$ (cf.~Lemma~\ref{lem:well_posed}) and the definition of $\diam_{\normt{\cdot}_*}(\calU)$ in~\eqref{eq:def_diamU}.  Now, choose $x = x^{k+1}\in\barcalS$ and telescope~\eqref{eq:telescope} from $0$ to $k-1$, we have that for all $k\ge 1$, 
\begin{align}
\psi_{k}^*=\psi_{k}(x^{k})&\ge \psi_0(x^0) + \sum_{i=0}^{k-1}\,\alpha_i P(x^i)  -  \frac{2\diam_{\normt{\cdot}_*}(\calU)^2}{\mu_{\barcalS}}\sum_{i=0}^{k-1}\, \frac{\alpha_i^2}{{\beta_{i+1}}}. 
\end{align}
By the definitions of $\barx^k$ and $\tilx^k$ in~\eqref{eq:def_barx_tilx}, and the fact that $\beta_k = \sum_{i=0}^{k-1}\,\alpha_i$ (cf.~\ref{eq:step}),  we have 
\begin{align}
\textstyle \sum_{i=0}^{k-1}\,\alpha_i P(x^i)&\ge  \beta_k \max\{P(\barx^k),  P(\tilx^k)\}. 
\end{align}
This, combined with that $\psi_0\equiv 0$, yields 
\begin{equation}
\psi_{k}^*\ge \beta_k \max\{P(\barx^k),  P(\tilx^k)\}  -  \frac{2\diam_{\normt{\cdot}_*}(\calU)^2}{\mu_{\barcalS}}\sum_{i=0}^{k-1}\, \frac{\alpha_i^2}{{\beta_{i+1}}}. \label{eq:primal_ineq}
\end{equation}
 
Next, for $k\ge 0$, since $g^k \in\partial f(\rvA x^k)$, we have $\ipt{g^k}{\rvA x^k} = f(\rvA x^k) + f^*(g^k)$, and hence for $k\ge 1$,
\begin{align}
\psi_{k}^* = \textstyle\min_{x\in\bbX}\,\psi_k(x) &=\textstyle \min_{x\in\bbX}\, \textstyle\sum_{i=0}^{k-1} \alpha_i (\ipt{g^i}{\rvA x} - f^*(g^i)) + \beta_k h(x)\\
 & \textstyle=\min_{x\in\bbX}\, \beta_k (\ipt{\bars^k}{\rvA x}  +  h(x))  - \sum_{i=0}^{k-1} \alpha_i f^*(g^i) \\
 &= -\beta_k h^*({-\rvA^*\bars^k}) - \textstyle\sum_{i=0}^{k-1} \alpha_i f^*(g^i) \\
 &\le  -\beta_k (h^*({-\rvA^*\bars^k})    +  f^*(\bars^k)) \label{eq:conv_f*}\\
 & = -\beta_k D(\bars^k), \label{eq:dual_ineq} 
\end{align}
where in~\eqref{eq:conv_f*} we use the convexity of $f^*$. Combining~\eqref{eq:primal_ineq} and~\eqref{eq:dual_ineq}, we have
\begin{align}
\max\{P(\barx^k) + D(\bars^k),  P(\tilx^k)+D(\bars^k)\}\le \frac{2\diam_{\normt{\cdot}_*}(\calU)^2}{\mu_{\barcalS} \beta_k} \sum_{i=0}^{k-1}\, \frac{\alpha_i^2}{{\beta_{i+1}}}\le \frac{8\diam_{\normt{\cdot}_*}(\calU)^2}{\mu_{\barcalS}(k+1)}, \quad \forall\,k\ge 1,\nn
\end{align}
where in the last inequality we use~\ref{eq:step}. 
\end{proof}

\subsection{Some Remarks on Algorithm~\ref{algo:DA} and Theorem~\ref{thm:pdgap}}

Before concluding this section, let us make several remarks  regarding Algorithm~\ref{algo:DA} and Theorem~\ref{thm:pdgap}. 

\begin{itemize}[leftmargin = 0ex,label={},topsep=0pt,itemsep=5pt]
\item First, notice that 
the step-size sequences $\{\alpha_k\}_{k\ge 0}$ and $\{\beta_k\}_{k\ge 0}$ in~\ref{eq:step} do not depend on any problem parameters (such as $\diam_{\normt{\cdot}_*}(\calU)$ and $\mu_{\barcalS}$) that appear in the computational guarantees~\eqref{eq:comp_guarantee_DA}.

\item Second, note that $\diam_{\normt{\cdot}_*}(\calU)$ only depends on $\rvA$ and $f$ (or more precisely, $\dom f^*$), but not $h$.  In addition, since ${\max}_{y\in\calQ}\, \normt{y}\le  L$ (cf.~\cite[Corollary~13.3.3]{Rock_70}), where $L$ denotes the Lipschitz constant  of $f$,  $\diam_{\normt{\cdot}_*}(\calU)$ can indeed be bounded by $L$ as follows:
\begin{equation}
\diam_{\normt{\cdot}_*}(\calU)\le \normt{\rvA^*}\; {\max}_{y,y'\in\calQ}\, \normt{y-y'}_*\le 2\normt{\rvA}L,
\end{equation}
where $\normt{\rvA^*}$ denotes the operator norm of $\rvA^*$, and 
$$\normt{\rvA^*}= \textstyle\max_{\normt{y}=1, \normt{x}=1}\; \ipt{\rvA^*y}{x} = \max_{\normt{y}=1, \normt{x}=1}\; \ipt{\rvA x}{y} = \normt{\rvA}.$$ 
In some cases,  $\diam_{\normt{\cdot}_*}(\calU)$ can be significantly smaller than $2\normt{\rvA}L$, and thereby using $\diam_{\normt{\cdot}_*}(\calU)$ rather than  $2\normt{\rvA}L$ in~\eqref{eq:comp_guarantee_DA} provides a much tighter guarantee. 

\item Third, note that the constant $\mu_{\barcalS}$ appearing in~\eqref{eq:comp_guarantee_DA} depends on both $h$ and $f$. This is in contrast to the ``canonical'' case where $h$ is $\mu$-strongly convex (on its domain) --- in this case, $\mu_{\barcalS}=\mu$, 
which does not depend on $f$. 

\item Lastly, note that when $h$ is $\mu$-strongly convex, Grigas~\cite[Section~3.3.1]{Grigas_16} provides a computational guarantee of Algorithm~\ref{algo:DA} regarding the primal objective gap of~\ref{eq:P}. (The same is true for the computational guarantees of the mirror descent method for solving~\ref{eq:P}; cf.~\cite[Proposition~3.1]{Bach_15}.)  By replacing $\mu_{\barcalS}$ with $\mu$ in~\eqref{eq:comp_guarantee_DA}, 
we have indeed provided a computational guarantee regarding the primal-dual gap in this case, which is slightly stronger than the previous results. 
\end{itemize}

\section{Removing Assumptions on $f$ and $\rvA$, and a New DA-Type Method} \label{sec:new_DA}

Note that Assumption~\ref{assum:Q} involves both functions $h$ and $f$ and the linear operator $\rvA$, and it plays an important role in ensuring the well-definedness of the original DA method in Algorithm~\ref{algo:DA} (cf.\ Remark~\ref{rmk:well_defined}), as well as in analyzing the convergence rate of Algorithm~\ref{algo:DA}. 
In a sense, the success of Algorithm~\ref{algo:DA} with non-strongly-convex prox-function $h$ requires 
the ``good behavior'' of $f$ and $\rvA$, which can be somewhat restrictive. It is therefore natural to ask if we can completely remove any assumption on $f$ and $\rvA$. 
In this section, we shall demonstrate that this is indeed possible, provided that $\dom h^*$ is assumed to be open.
In fact,  one simple yet representative problem instance in this case is~\eqref{P-toy} with nonnegative data matrix $A$, as introduced in Section~\ref{sec:intro}. However,  
since 
Algorithm~\ref{algo:DA} may not be even well-defined in this case (cf.\ Remark~\ref{rmk:well_defined}), 
we need to develop new DA-type methods that work under the new assumptions. Indeed, based on the idea of ``dual monotonicity'', we shall propose a new DA-type method,  and show that it 
has an $O(1/k)$ convergence rate in terms of the primal-dual gap.  

Unlike the original DA method that was developed to solve the primal problem~\ref{eq:P}, the development of our new DA-type method will be primarily based on solving the dual problem~\ref{eq:D}. Before presenting our new algorithm, let us first  
observe that by~\cite[Corollary~31.2.1]{Rock_70}, 
strong duality holds between~\ref{eq:P} and~\ref{eq:D} (i.e.,  $P_* = -D_*$), and since $P_*>-\infty$, we know that $D_*<+\infty$ and hence~\ref{eq:D} is feasible, namely 
\begin{equation}
\dom D:= \{y\in\dom f^*: -\rvA^* y \in \dom h^*\}\ne \emptyset.  \label{eq:def_domD}
\end{equation}
In addition, since $\dom f^*$ is bounded (cf.\ Lemma~\ref{lem:compact}), $\dom D$ is bounded.

\subsection{Introduction to Algorithm~\ref{algo:MDA}} \label{sec:intro_algo_2}

Our new DA-type  method is shown in Algorithm~\ref{algo:MDA}. In fact, in terms of the 
structure and parameter choices, this new method is similar to the original one (i.e., Algorithm~\ref{algo:DA}), but the key difference is that we only generate the dual iterates $\{\bars^k\}_{k\ge 0}$ that keep or improve the dual objective value. Specifically, at each iteration $k\ge 0$, the iterate $\hats^k$ can be interpreted as the ``trial iterate'', and we only accept it if it results in a strict decrease of the dual objective value. 
For ease of reference, we shall call an iteration $k$ {\em ``active''} if $D(\hats^{k}) < D(\bars^k)$, and {\em ``idle''} otherwise. 
  Apart from this, another (somewhat subtle) difference between Algorithms~\ref{algo:DA} and~\ref{algo:MDA} lies the choice of initial dual iterate $\bars^0$. 
  Specifically, in Algorithm~\ref{algo:DA}, as 
defined in~\eqref{eq:def_bars}, $\bars^0 := g^{-1}  \in \partial f(\rvA x^{-1})$ for some $x^{-1}\in \bbX$, and under Assumption~\ref{assum:Q}, we know that $\bars^0\in \dom D$. 
However, when Assumption~\ref{assum:Q} fails to hold, such an initialization need not ensure that $\bars^0\in \dom D$, and therefore we need to specify a dually feasible initial iterate $\bars^0$ in Algorithm~\ref{algo:MDA}.

Let us make two remarks about Algorithm~\ref{algo:MDA}. First, note that Algorithm~\ref{algo:MDA} requires the zeroth-order oracle of the dual objective function $D$, which is not required in Algorithm~\ref{algo:DA}. Indeed, in most applications, the functions $f$ and $h$ have relatively simple forms, and therefore their Fenchel conjugates $f^*$ and $h^*$ can be easily found (and evaluated), which give rise to the zeroth-order oracle of $D$. That said, the well-definedness and analysis of Algorithm~\ref{algo:DA} require 
Assumption~\ref{assum:Q} to hold, which in turn requires the knowledge of 
$\dom f^*$ and $\dom h^*$. As mentioned in Remark~\ref{rmk:verify}, in some situations, finding $\dom f^*$ and $\dom h^*$ can be easier than finding 
$f^*$ and $h^*$ themselves. 
Second, in Algorithm~\ref{algo:MDA}, we only solve the sub-problem  in~\eqref{eq:subprob} and compute a subgradient of $f$ at an ``active'' iteration $k$. 
In contrast, we perform these two tasks at every iteration in Algorithm~\ref{algo:DA}. 

To ensure the well-definedness of Algorithm~\ref{algo:MDA} and analyze its convergence rate, we impose the following assumption on $h$.

\begin{assump}\label{assum:open_dom}
$\dom h^*$ is open. 
\end{assump}

\begin{remark}
Note that to verify Assumption~\ref{assum:open_dom}, we need not explicitly find $h^*$. In Section~\ref{sec:assump_dom_open}, we will provide some sufficient conditions on $h$ to ensure that Assumption~\ref{assum:open_dom} holds, based on the notion of {\em affine attainment}, along with illustrating examples.
\end{remark}

 Next, let us show that Algorithm~\ref{algo:MDA} is well-defined under Assumptions~\ref{assum:h} and~\ref{assum:open_dom}. To that end, given the initial dual iterate  $\bars^0\in\dom D$ in Algorithm~\ref{algo:MDA},  
define the sub-level set 
\begin{equation}
\calL:= \{y\in \bbY: D(y)\le D(\bars_0)\}\subseteq \dom D. \label{eq:def_L}
\end{equation}
Since $D$ is proper, closed and convex and has  bounded domain, we know that $\calL$ is nonempty, convex and compact. Based on $\calL$, we define 
\begin{equation}
\bar\calU:= -\rvA^*(\calL) \subseteq \dom h^*, 
\end{equation}
and we know that $\bar\calU$ is nonempty, convex and compact. Based on these definitions, let us show that  Algorithm~\ref{algo:MDA} is well-defined. 

\begin{algorithm}[t!]
\caption{Dual Averaging With Dual Monotonicity}\label{algo:MDA}
\begin{algorithmic}
\State {\bf Input}: 
$\bars^0\in\dom D$, step-size sequences $\{\alpha_k\}_{k\ge 0}$ and $\{\beta_k\}_{k\ge 0}$ chosen as in~\ref{eq:step} \phantom{r} \vspace{1ex}
\State {\bf Pre-start}: Compute 
 $x^0:=\argmin_{x\in\bbX}\; \ipt{\bars^0}{\rvA x} + h(x)$ and $g^{0}\in\partial f(\rvA x^{0})$ \vspace{1ex} 
\State {\bf At iteration $k\ge 0$}:
\begin{enumerate}[leftmargin = 6ex]
\item \label{item:def_hatsk} Compute $\hats^{k}:= (1-\tau_k)  \bars^k + \tau_k g^k$, where $\tau_k:= {\alpha_k }/\beta_{k+1}$
\item If $D(\hats^{k}) < D(\bars^k)$ then 
\begin{enumerate}[label = \roman*),leftmargin = 6ex]
\item $\bars^{k+1} = \hats^{k}$
\item $x^{k+1}:=  \argmin_{x\in \bbX}\; \ipt{\bars^{k+1}}{\rvA x}+ h(x)$ 
\item $g^{k+1} \in\partial f(\rvA x^{k+1})$
\end{enumerate}
Else   
\begin{enumerate}[label = \roman*),leftmargin = 6ex]
\item $\bars^{k+1} = \bars^{k}$
\item $x^{k+1}:=  x^k$ 
\item $g^{k+1} := g^k$
\end{enumerate}
\end{enumerate}
\end{algorithmic}
\end{algorithm}


\begin{lemma} \label{lem:well_posed_2}
Under Assumptions~\ref{assum:h} and~\ref{assum:open_dom}, define 
\begin{equation}
\calR:= \conv(\nabla h^*(\bar\calU)) . 
\end{equation}
In Algorithm~\ref{algo:MDA}, $\bars^k\in \calL$, $-\rvA^*\bars^{k}\in  \bar\calU$ and $x^{k}= \nabla h^*(-\rvA^*\bars^{k})\in\calR$ for $k\ge 0$. 
\end{lemma}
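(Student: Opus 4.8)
The statement is proved by induction on $k$, and the one substantive point is that the monotone acceptance rule keeps every dual iterate inside the compact sub-level set $\calL$, which in turn keeps $-\rvA^*\bars^k$ inside $\dom h^*$ --- the set on which $\nabla h^*$, and hence the sub-problem defining $x^k$, is well-behaved. So I would first collect two preliminary facts. (i) By Assumption~\ref{assum:open_dom}, $\dom h^*$ is open, so $\dom h^* = \inter\dom h^*$; by Lemma~\ref{lem:h^*_diff} (which uses Assumption~\ref{assum:h}) this set is nonempty and $h^*$ is continuously differentiable on it, hence $\nabla h^*$ is defined and continuous on all of $\dom h^*$. Since $\calL$ is nonempty, convex and compact (as already noted: $D$ is proper, closed, convex with bounded domain), $\bar\calU = -\rvA^*(\calL) \subseteq \dom h^*$ is nonempty, convex and compact, so $\nabla h^*(\bar\calU)$ is nonempty and compact, $\calR = \conv(\nabla h^*(\bar\calU))$ is nonempty, convex and compact, and $\calR \subseteq \conv(\dom h) = \dom h$ because $\ran \nabla h^* \subseteq \dom h$.

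\emph{A sub-problem identity.} The second preliminary fact I would record is: for any $y\in\bbY$ with $u := -\rvA^*y \in \dom h^*$, the problem $\min_{x\in\bbX}\, \ipt{y}{\rvA x} + h(x) = \min_{x\in\bbX}\, -\ipt{u}{x} + h(x) = -h^*(u)$ attains its optimum, with unique optimal solution $\nabla h^*(u) = \nabla h^*(-\rvA^*y)$. Indeed, the optimal-solution set equals $\argmax_{x}\,\{\ipt{u}{x} - h(x)\} = \partial h^*(u)$ by Fenchel--Young; since $u\in\inter\dom h^*$, $\partial h^*(u)\ne\emptyset$ (subdifferentiability on the interior of the domain in $\bbR^n$), and since $h^*$ is differentiable at $u$, $\partial h^*(u) = \{\nabla h^*(u)\}$. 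This is exactly what makes each step ``$x^{k+1} := \argmin_x \ipt{\bars^{k+1}}{\rvA x} + h(x)$'' of Algorithm~\ref{algo:MDA} well-defined whenever $-\rvA^*\bars^{k+1}\in\dom h^*$.

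\emph{The induction.} For $k=0$: the input gives $\bars^0\in\dom D$, so $D(\bars^0)<+\infty$ and trivially $D(\bars^0)\le D(\bars^0)$, i.e. $\bars^0\in\calL$; hence $-\rvA^*\bars^0\in\bar\calU\subseteq\dom h^*$, and by the sub-problem identity $x^0 = \nabla h^*(-\rvA^*\bars^0)\in\nabla h^*(\bar\calU)\subseteq\calR$ (and $g^0\in\partial f(\rvA x^0)$ exists since $f$ is real-valued convex). Now assume the three claims at some $k\ge 0$. If iteration $k$ is idle, $D(\hats^k)\ge D(\bars^k)$, then $\bars^{k+1}=\bars^k$ and $x^{k+1}=x^k$, so the claims at $k+1$ are immediate. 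If iteration $k$ is active, $D(\hats^k) < D(\bars^k)$, then $\bars^{k+1}=\hats^k$ and $D(\bars^{k+1}) = D(\hats^k) < D(\bars^k) \le D(\bars^0)$, the last inequality using $\bars^k\in\calL$ from the inductive hypothesis; hence $\bars^{k+1}\in\calL$, $-\rvA^*\bars^{k+1}\in\bar\calU\subseteq\dom h^*$, the sub-problem identity makes $x^{k+1}$ well-defined with $x^{k+1}=\nabla h^*(-\rvA^*\bars^{k+1})\in\calR$, and $g^{k+1}\in\partial f(\rvA x^{k+1})$ exists. This closes the induction. (The test $D(\hats^k)<D(\bars^k)$ is always meaningful: $D(\bars^k)<+\infty$ by hypothesis, so if $\hats^k\notin\dom D$ the test simply fails and iteration $k$ is idle.)

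\emph{Main obstacle.} There is no deep obstacle; the proof is bookkeeping once one has Lemma~\ref{lem:h^*_diff} and the observation $\dom h^*$ open $\Rightarrow \dom h^* = \inter\dom h^*$. The point worth emphasizing is the structural role of Assumption~\ref{assum:open_dom}: in Algorithm~\ref{algo:DA} the containment $-\rvA^*\bars^k\in\inter\dom h^*$ was secured once and for all by Assumption~\ref{assum:Q} (Lemma~\ref{lem:well_posed}), whereas here there is no a priori compact set containing the dual images, so membership in $\calL$ must be propagated iteratively --- which works precisely because the acceptance rule only ever decreases $D$.
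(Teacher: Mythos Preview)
Your proposal is correct and follows essentially the same approach as the paper: both arguments hinge on the dual monotonicity $D(\bars^{k+1})\le D(\bars^k)$ to keep $\bars^k\in\calL$, then use Assumption~\ref{assum:open_dom} to identify $\dom h^*=\inter\dom h^*$ and invoke Lemma~\ref{lem:h^*_diff} so that the sub-problem has unique minimizer $\nabla h^*(-\rvA^*\bars^k)\in\calR$. The paper's version is simply terser---it records the monotonicity as $D(\bars^{k+1})=\min\{D(\bars^k),D(\hats^k)\}$ and dispenses with the explicit induction and case split---but your more detailed treatment of the sub-problem identity and the idle/active cases is entirely sound.
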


\begin{proof}
Note that in Algorithm~\ref{algo:MDA}, 
we always have 
\begin{equation}
x^{k}:=  {\argmin}_{x\in \bbX}\; \ipt{\bars^{k}}{\rvA x} + h(x), \qquad \forall\, k\ge 0.  \label{eq:def_x_bars2}
\end{equation}
Also, for all $k\ge 0$, we have 
\begin{equation}
D(\bars^{k+1}) = \min\{D(\bars^{k}), D(\hats^{k})\}\le D(\bars^k), \label{eq:D_monotone}
\end{equation}
and hence $\bars^k\in \calL$ for $k\ge 0$, implying that $-\rvA^*\bars^{k}\in  \bar\calU\subseteq \dom h^*$ for $k\ge 0$.
 Since $\dom h^*$ is open, we have $\dom h^* = \inter\dom h^*$, and  by Lemma~\ref{lem:h^*_diff}, we know that $h^*$ is differentiable at $-\rvA^*\bars^{k}$. 
 By~\eqref{eq:def_x_bars2}, 
we have $x^{k} = \nabla h^*(-\rvA^*\bars^{k})$ and since $-\rvA^*\bars^{k}\in  \bar\calU$, we have  $x^{k} \in\calR$.
\end{proof}


\subsection{Convergence Rate Analysis of Algorithm~\ref{algo:MDA}}

The lemma below establishes the smoothness property of $h^*$ on any nonempty convex compact set inside $\dom h^*$, which is crucial in our analysis of Algorithm~\ref{algo:MDA}. 

\begin{lemma} \label{lem:h^*}
Under Assumption~\ref{assum:h}, for any nonempty, convex and compact set $\calW\subseteq \inter\dom h^*$, define 
\begin{equation}
\calS:= \conv(\nabla h^*(\calW)) .  \label{eq:def_S2}
\end{equation}
Then $\calS$ is nonempty, convex and compact, and $\calS \subseteq \dom h$. In addition, the function $h^*$ is $\mu_{\calS}^{-1}$-smooth on $\calW$, namely
\begin{equation}
\normt{\nabla h^*(u_1) - \nabla h^*(u_2)}\le \mu_{\calS}^{-1} \normt{u_1 - u_2}_*, \quad\forall\, u_1,u_2\in \calW, \label{eq:grad_ineq0}
\end{equation}
 where $\mu_\calS>0$ is defined in~\eqref{eq:def_muS}.  
In particular, 
 if Assumption~\ref{assum:open_dom} also holds, then the above holds for any $\calW\subseteq \dom h^*$. 
\end{lemma}

\begin{proof}

Since $\calW\subseteq\inter\dom h^*$, using the same argument as in the proof of Lemma~\ref{lem:compact}, we know that $\calS \subseteq \dom h$ is nonempty, convex and compact. Now, take any $u_1,u_2\in\calW$. For $i =1,2$, since $u_i\in \inter\dom h^*$, we know that i) $h^*$ is differentiable at $u_i$, and ii) $g_i := h - \ipt{u_i}{\cdot}$ is coercive (cf.~\cite[Fact 2.11]{Bauschke_97}). This, together with  Lemma~\ref{lem:h^*_diff}, shows that $g_i$ has a unique minimizer on $\bbX$, which allows us  to define 
\begin{equation}
x_i^* := {\argmin}_{x\in\bbX}\; h(x) - \ipt{x}{u_i}\in \dom h. \label{eq:x^*_i} 
\end{equation}
By the optimality condition of~\eqref{eq:x^*_i}, we know that $u_i\in \partial h(x_i^*)$  and hence $x^*_i:=\nabla h^*(u_i) \in\calS$, for $i =1,2$. By the $\mu_\calS$-strong convexity of  $h$ on $\calS$, we have 
\begin{equation}
 \normt{x_1^*-x_2^*} \normt{u_1-u_2}_*\ge \ipt{u_1-u_2}{x_1^*-x_2^*}\ge \mu_\calS\normt{x_1^*-x_2^*}^2.
\end{equation}
If $x_1^*\ne x_2^*$, we then have 
\begin{equation}
\normt{\nabla h^*(u_1)-\nabla h^*(u_2)}=\normt{x_1^*-x_2^*}\le \mu_\calS^{-1} \normt{u_1-u_2}_*. \label{eq:grad_ineq}
\end{equation}
If $x_1^*= x_2^*$, then~\eqref{eq:grad_ineq}  trivially holds. 
Finally, under Assumption~\ref{assum:open_dom}, we know that $\dom h^*$ is open, and hence $ \dom h^* = \inter\dom h^*$. Therefore, the above holds for any $\calW\subseteq \dom h^*$. 
\end{proof}

\begin{remark}[Replacing Assumption~\ref{assum:h} with other assumptions] 
As we shall see later, the role that Assumption~\ref{assum:h} plays in analyzing Algorithm~\ref{algo:MDA} is  ensuring that 
$h^*$ is smooth on any nonempty, convex and compact set $\calW\subseteq \inter\dom h^*$. As such, Assumption~\ref{assum:h} can be replaced with any other assumption that ensures the same  condition. 
 For example, we can replace Assumption~\ref{assum:h} with $h$ being very strictly convex and Legendre. In fact, by the same reasoning as in the proof of Lemma~\ref{lem:vsc_suff}, we know that $\calS$ in~\eqref{eq:def_S2} is  nonempty, convex and compact, $\calS\subseteq  \inter\dom h$ and $h$ is $\mu_{\calS}$-strongly-convex on $\calS$. 
 By the same reasoning  as in the proof of Lemma~\ref{lem:h^*}, 
 we can still show that $h^*$ is $\mu_{\calS}^{-1}$-smooth on $\calW$, where  $\mu_\calS>0$ is defined in~\eqref{eq:def_muS}.  
\end{remark}

In the analysis of Algorithm~\ref{algo:MDA}, we also need the following technical lemma. 

\begin{lemma}\label{lem:seq}  
Given $A\ge 0$ and $k_0\ge 0$, suppose that $\{a_k\}_{k\ge 0}$ and $\{b_k\}_{k\ge 0}$ are two nonnegative sequences  satisfying that 
\begin{equation}
a_k \le b_k\quad \andd\quad a_{k+1}\le a_k - \tau_k b_k + (A/2) \tau_k^2, \quad \forall\, k\ge k_0, \label{eq:recur}
\end{equation}
where $\tau_k :=\alpha_k/\beta_{k+1}$ for $k\ge k_0$, and $\{\alpha_k\}_{k\ge 0}$ and $\{\beta_k\}_{k\ge 0}$ are chosen as in~\ref{eq:step}.   
Then we have 
\begin{align}
a_k&\le \frac{{k_0}(k_0+1)a_{k_0} + 2A(k-k_0)}{k(k+1)}, \quad \forall\,k\ge k_0+1, \label{eq:a_k} \quad\andd \\
{\min}_{i=\lfloor (k+k_0)/2 \rfloor}^{k-1}\; b_i &\le
 \frac{12(k_0+1)^2 }{(k - k_0)(k+k_0)} a_{k_0}+ 
\frac{26A}{k+k_0}, \quad \forall\, k\ge k_0 + 1. \label{eq:b_k} 
\end{align}
\end{lemma}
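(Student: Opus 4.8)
The plan is to handle the two bounds separately, with \eqref{eq:a_k} serving as the foundation for \eqref{eq:b_k}. For \eqref{eq:a_k}, the natural device is to multiply the recursion by $\beta_{k+1} = (k+1)(k+2)/2$. Since $\tau_k = \alpha_k/\beta_{k+1}$ and $\alpha_k = k+1$, we have $\beta_{k+1}\tau_k = \alpha_k = k+1$ and $\beta_{k+1}\tau_k^2 = \alpha_k^2/\beta_{k+1} = 2(k+1)/(k+2) \le 2$. Dropping the nonnegative $-\tau_k b_k$ term (or rather keeping it for the second part but discarding it here), the recursion $a_{k+1} \le a_k - \tau_k b_k + (A/2)\tau_k^2$ together with $a_k \le b_k$ gives $a_{k+1} \le (1-\tau_k)a_k + (A/2)\tau_k^2$, i.e. $\beta_{k+1}a_{k+1} \le (\beta_{k+1}-\alpha_k)a_k + A(k+1)/(k+2) \le \beta_k a_k + A$, using $\beta_{k+1}-\alpha_k = \beta_k$. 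Telescoping this from $k_0$ to $k-1$ yields $\beta_k a_k \le \beta_{k_0} a_{k_0} + A(k-k_0)$, and since $\beta_k = k(k+1)/2$ and $\beta_{k_0} = k_0(k_0+1)/2$, multiplying through by $2$ gives exactly \eqref{eq:a_k}.

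For \eqref{eq:b_k}, the idea is to \emph{not} discard the $-\tau_k b_k$ term. Again multiply the recursion by $\beta_{k+1}$: $\beta_{k+1}a_{k+1} \le \beta_{k+1}a_k - \alpha_k b_k + A(k+1)/(k+2)$. But now I want to telescope in a way that isolates $\sum \alpha_k b_k$. Rewrite as $\alpha_k b_k \le \beta_{k+1}a_k - \beta_{k+1}a_{k+1} + A = (\beta_k a_k - \beta_{k+1}a_{k+1}) + \alpha_k a_k + A$. Hmm — the cleaner route is to instead write $\beta_{k+1}a_{k+1} \le \beta_k a_k - \alpha_k b_k + \alpha_k a_k + A$ and then bound $\alpha_k a_k$ using \eqref{eq:a_k}; but an even more direct approach is: from $\beta_{k+1}a_{k+1} \le \beta_{k+1}a_k - \alpha_k b_k + A$ and $a_k \le b_k$ we get nothing new, so instead keep $\alpha_k b_k \le \beta_{k+1}(a_k - a_{k+1}) + A$ and sum from $i = \lfloor(k+k_0)/2\rfloor$ to $k-1$. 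Abel summation / summation by parts on $\sum \beta_{i+1}(a_i - a_{i+1})$ produces boundary terms plus $\sum (\beta_{i+1} - \beta_i) a_i = \sum \alpha_i a_i$, which is then controlled by \eqref{eq:a_k}: $\alpha_i a_i = (i+1)a_i \le (i+1)\big[k_0(k_0+1)a_{k_0} + 2A(i-k_0)\big]/(i(i+1)) = [k_0(k_0+1)a_{k_0} + 2A(i-k_0)]/i$. Summing this over the range $i \ge \lfloor(k+k_0)/2\rfloor$ (so $i \gtrsim (k+k_0)/2$, a range of length $\sim (k-k_0)/2$) gives a logarithmic-free bound of the form $C_1 (k_0+1)^2 a_{k_0}/(k+k_0) \cdot (\text{number of terms}) + C_2 A \cdot(\text{number of terms})$ — wait, that overcounts. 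The key point is that dividing $\min_i b_i$ out: since $\sum_{i} \alpha_i = \sum_i (i+1)$ over the window has size $\Theta((k-k_0)(k+k_0))$, and $\min_i b_i \cdot \sum_i \alpha_i \le \sum_i \alpha_i b_i \le (\text{bound})$, the division produces the claimed $\frac{12(k_0+1)^2}{(k-k_0)(k+k_0)}a_{k_0} + \frac{26A}{k+k_0}$.

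Concretely the steps are: (1) derive \eqref{eq:a_k} by the telescoping argument above; (2) from the recursion obtain $\alpha_k b_k \le \beta_{k+1}a_k - \beta_{k+1}a_{k+1} + A$; (3) sum over $i \in \{\lfloor(k+k_0)/2\rfloor, \dots, k-1\}$, apply summation by parts to convert $\sum \beta_{i+1}(a_i - a_{i+1})$ into $\beta_{\lfloor(k+k_0)/2\rfloor}a_{\lfloor(k+k_0)/2\rfloor} - \beta_k a_k + \sum_i \alpha_i a_i$ (keeping only the nonnegative leading boundary term); (4) bound $\beta_{\lfloor(k+k_0)/2\rfloor}a_{\lfloor(k+k_0)/2\rfloor}$ and each $\alpha_i a_i$ using \eqref{eq:a_k}; (5) lower-bound $\sum_i \alpha_i \ge$ (explicit quadratic in $k, k_0$) and divide to extract $\min_i b_i$; (6) absorb all absolute constants into the stated $12$ and $26$ by crude estimates valid for $k \ge k_0+1$. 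The main obstacle is step (5)–(6): getting the constants to come out as small as $12$ and $26$ requires being somewhat careful with the floor function and the range endpoints (e.g. using $\lfloor(k+k_0)/2\rfloor \ge (k+k_0-1)/2$ and $k - \lfloor(k+k_0)/2\rfloor \ge (k-k_0)/2$), and verifying that the window is nonempty; everything else is routine telescoping and algebra. I would also double-check the edge case where no iteration in the window is ``active'' is irrelevant here since this lemma is purely about number sequences.
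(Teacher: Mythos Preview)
Your argument for \eqref{eq:a_k} is correct and is exactly what the paper does: use $a_k\le b_k$ to get $a_{k+1}\le (1-\tau_k)a_k+(A/2)\tau_k^2$, multiply by $\beta_{k+1}$, use $\beta_{k+1}(1-\tau_k)=\beta_k$, and telescope.

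For \eqref{eq:b_k} your route differs from the paper's and is more laborious than necessary. You multiply by $\beta_{k+1}$, extract $\alpha_i b_i \le \beta_{i+1}(a_i-a_{i+1})+A$, sum over the window, and then apply summation by parts, which produces a boundary term $\beta_{\bar k+1}a_{\bar k}$ \emph{and} a sum $\sum_i \alpha_i a_i$ that must each be bounded via \eqref{eq:a_k}. The paper instead telescopes the recursion \emph{without} multiplying by $\beta_{k+1}$: summing $a_{i+1}\le a_i-\tau_i b_i+(A/2)\tau_i^2$ over $i=\bar k,\dots,k-1$ and using $a_k\ge 0$ gives
\[
\textstyle\big(\min_i b_i\big)\sum_{i=\bar k}^{k-1}\tau_i \;\le\; \sum_{i=\bar k}^{k-1}\tau_i b_i \;\le\; a_{\bar k}+(A/2)\sum_{i=\bar k}^{k-1}\tau_i^2,
\]
so one only needs to bound $\sum_i\tau_i\ge 2(k-\bar k)/(k+1)$, $\sum_i\tau_i^2\le 4(k-\bar k)/((\bar k+1)(k+1))$, and then plug \eqref{eq:a_k} in \emph{once} for $a_{\bar k}$. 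This avoids Abel summation entirely and makes the constants $12$ and $26$ fall out with a single estimate $(k+1)/(k+k_0-1)\le 3$ for $k\ge 2$, plus a one-line check at $k=k_0+1$. Your approach would work in principle, but controlling $\sum_i\alpha_i a_i$ term-by-term via \eqref{eq:a_k} and then dividing by $\sum_i\alpha_i$ makes the constant-chasing substantially messier; you are right to flag this as the main obstacle, and I would recommend switching to the unweighted telescoping to sidestep it.
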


\begin{proof}
See Appendix~\ref{app:proof_seq}. 
\end{proof}


Our convergence rate analysis of Algorithm~\ref{algo:MDA} is geometric in nature. Due to this, let us define a few geometric quantities. 
First, let us define 
\begin{equation}
\Delta := \dist_{\normt{\cdot}_*}(\bar\calU,\, \bdry \dom h^*), 
\label{eq:def_Delta} 
\end{equation}
if $\dom h^*\subsetneqq \bbX^*$ (i.e., $\bdry \dom h^*\ne \emptyset$), 
and $\Delta := +\infty$ if $\dom h^*= \bbX^*$. Note that under Assumption~\ref{assum:open_dom}, we always have $\Delta>0$,  as shown in the lemma below. 

\begin{lemma}\label{lem:Delta}
Under Assumption~\ref{assum:open_dom}, if $\dom h^*\subsetneqq \bbX^*$, then there exist $u\in\bar\calU$ and $u'\in \bdry \dom h^* $ such that $\Delta = \normt{u-u'}_*>0$. 
\end{lemma}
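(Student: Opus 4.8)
The plan is to establish the two assertions in turn: that the infimum defining $\Delta$ in~\eqref{eq:def_Delta} is attained by some pair $(u,u')\in\bar\calU\times\bdry\dom h^*$, and that the resulting value is strictly positive.

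First I would record a few preliminary facts. By Lemma~\ref{lem:h^*_diff}, $\inter\dom h^*\ne\emptyset$, so $\dom h^*$ is a nonempty open subset of $\bbX^*$; since by hypothesis $\dom h^*\subsetneqq\bbX^*$ and $\bbX^*$ is connected, $\dom h^*$ cannot also be closed, hence $\bdry\dom h^*\ne\emptyset$ (and it is of course closed, being a boundary). Also $\calL\ne\emptyset$, since $\bars^0\in\dom D$ gives $\bars^0\in\calL$, so $\bar\calU=-\rvA^*(\calL)$ is nonempty, convex and compact, and $\bar\calU\subseteq\dom h^*$.

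For attainment, I would take sequences $u_n\in\bar\calU$ and $u_n'\in\bdry\dom h^*$ with $\normt{u_n-u_n'}_*\to\Delta$. Compactness of $\bar\calU$ lets me pass to a subsequence along which $u_n\to u\in\bar\calU$; then $\normt{u_n'}_*\le\normt{u_n-u_n'}_*+\normt{u_n}_*$ stays bounded (the first term converges, hence is bounded, and the second is bounded since $\bar\calU$ is compact), so along a further subsequence $u_n'\to u'$, and $u'\in\bdry\dom h^*$ because $\bdry\dom h^*$ is closed. Passing to the limit via continuity of the norm gives $\normt{u-u'}_*=\Delta$.

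For strict positivity, I would use that $\dom h^*$ is open, so $\bar\calU\subseteq\dom h^*=\inter\dom h^*$, whereas $\bdry\dom h^*=\cl\dom h^*\setminus\inter\dom h^*$ is disjoint from $\inter\dom h^*$; hence $u\in\inter\dom h^*$ while $u'\notin\inter\dom h^*$, forcing $u\ne u'$, so that $\Delta=\normt{u-u'}_*>0$. The only point that requires a moment of care — and the closest thing to an obstacle — is the boundedness of the minimizing sequence $\{u_n'\}$ in the possibly unbounded set $\bdry\dom h^*$, which is exactly what the triangle-inequality bound above is used to secure; the rest is routine compactness bookkeeping.
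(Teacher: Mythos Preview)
Your proof is correct and follows essentially the same approach as the paper: both arguments rest on the compactness of $\bar\calU$, the closedness of $\bdry\dom h^*$, and their disjointness under Assumption~\ref{assum:open_dom}. The paper packages the attainment step as a general lemma (distance between a compact set and a closed set is attained) proved via coercivity of $(u,u')\mapsto \normt{u-u'}_*+\iota_{\bar\calU}(u)+\iota_{\bdry\dom h^*}(u')$, whereas you argue directly with minimizing sequences and the triangle inequality; these are interchangeable standard routes to the same conclusion.
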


\begin{proof}
See Appendix~\ref{app:proof_Delta}. 
\end{proof}

Next, for any $r\ge 0$, let us define the $r$-enlargement of the set $\bar\calU$ as 
\begin{equation}
\bar\calU(r) := \{u\in\bbX^*: \dist_{\normt{\cdot}_*}(u,\bar\calU)\le r\}.
\end{equation}
Indeed, $\bar\calU(r)$ has some nice geometric properties, which are stated in the lemma below. 

\begin{lemma} \label{lem:Ur}
For any $r\ge 0$, $\bar\calU(r)$ is nonempty, convex and compact. Additionally, under Assumption~\ref{assum:open_dom}, we have $\bar\calU(r)\subseteq\dom h^*$ for any $0\le r < \Delta$. 
\end{lemma}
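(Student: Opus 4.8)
The plan is to treat the two assertions separately, since they use different ingredients. For the topological/convexity part I would work with the function $\phi:\bbX^*\to\bbR$, $\phi(u):=\dist_{\normt{\cdot}_*}(u,\bar\calU)$. Because $\bar\calU$ is nonempty and convex, $\phi$ is a (finite-valued) convex function and is $1$-Lipschitz with respect to $\normt{\cdot}_*$; hence $\bar\calU(r)=\{u\in\bbX^*:\phi(u)\le r\}$ is a sublevel set of a continuous convex function, which gives convexity and closedness at once. Nonemptiness is immediate from $\bar\calU\subseteq\bar\calU(r)$, which holds since $\phi\equiv 0$ on the nonempty set $\bar\calU$. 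For boundedness I would invoke compactness of $\bar\calU$: it lies in some $\normt{\cdot}_*$-ball of radius $M$, and for each $u\in\bar\calU(r)$ the infimum defining $\phi(u)$ is attained at some $u'\in\bar\calU$, so $\normt{u}_*\le\normt{u'}_*+\normt{u-u'}_*\le M+r$. Being closed and bounded in the finite-dimensional space $\bbX^*$, $\bar\calU(r)$ is compact.

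For the inclusion $\bar\calU(r)\subseteq\dom h^*$ when $0\le r<\Delta$, I would first dispose of the case $\dom h^*=\bbX^*$, where $\Delta=+\infty$ and the claim is trivial. Otherwise $\dom h^*$ is a nonempty (since $h^*$ is proper) proper subset of the connected space $\bbX^*$, so $\bdry\dom h^*\ne\emptyset$ and, by Lemma~\ref{lem:Delta}, $\Delta>0$. Fixing $0\le r<\Delta$ and $u\in\bar\calU(r)$, I would use compactness of $\bar\calU$ to pick $u_0\in\bar\calU$ with $\normt{u-u_0}_*=\dist_{\normt{\cdot}_*}(u,\bar\calU)\le r$, and then argue by contradiction: if $u\notin\dom h^*$, consider the segment $[u_0,u]$, set $\bar t:=\inf\{t\in[0,1]:u_0+t(u-u_0)\notin\dom h^*\}$ and $v:=u_0+\bar t(u-u_0)$. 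One checks $\bar t\in(0,1]$ (positivity using openness of $\dom h^*$ and $u_0\in\dom h^*$), that $v\in\cl\dom h^*$ (limit of points of $\dom h^*$), and that $v\notin\dom h^*=\inter\dom h^*$, hence $v\in\bdry\dom h^*$. Then $\normt{u_0-v}_*=\bar t\,\normt{u_0-u}_*\le r<\Delta$ contradicts $\Delta=\dist_{\normt{\cdot}_*}(\bar\calU,\bdry\dom h^*)\le\normt{u_0-v}_*$, since $u_0\in\bar\calU$ and $v\in\bdry\dom h^*$. This forces $u\in\dom h^*$.

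The only slightly delicate point — the one I would write out most carefully — is the "boundary-crossing" step, i.e.\ verifying $v\in\bdry\dom h^*$: openness of $\dom h^*$ is needed both to get $\bar t>0$ and to exclude $v\in\dom h^*$ (if $\bar t<1$, openness would keep the segment inside $\dom h^*$ slightly past $\bar t$, contradicting the infimum; if $\bar t=1$ then $v=u\notin\dom h^*$ by assumption), after which $v\in\cl\dom h^*\setminus\inter\dom h^*=\bdry\dom h^*$. Everything else is routine. An equivalent, perhaps cleaner, packaging I might use instead is to show directly that the open $\normt{\cdot}_*$-ball of radius $\Delta$ about any $u_0\in\bar\calU$ is contained in $\dom h^*$, using $\dist_{\normt{\cdot}_*}(u_0,(\dom h^*)^c)=\dist_{\normt{\cdot}_*}(u_0,\bdry\dom h^*)\ge\Delta$ and connectedness of the ball; then $u\in\bar\calU(r)$ lies in that ball since $\normt{u-u_0}_*\le r<\Delta$.
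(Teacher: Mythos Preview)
Your proposal is correct and follows essentially the same route as the paper: the paper also characterizes $\bar\calU(r)$ as a sublevel set of the (convex, $1$-Lipschitz) distance function to obtain convexity and closedness, and for the inclusion argues by contradiction via a boundary-crossing step on the segment between a point of $\bar\calU$ and a hypothetical $u\notin\dom h^*$. The only cosmetic difference is that the paper isolates the boundary-crossing into a separate connectedness lemma (a segment from $\calA$ to $\calA^c$ meets $\bdry\calA$), whereas you construct the crossing point directly via the infimum $\bar t$; both are standard and equivalent here.
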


\begin{proof}
See Appendix~\ref{app:proof_Ur}. 
\end{proof}

Now, let us fix any $0\le r < \Delta$, and let $\calV(r)$ be a convex and compact set such that 
\begin{equation}
\bar\calU(r) \subseteq \calV(r)\subseteq \dom h^*.  \label{eq:V_r}
\end{equation}
By Lemma~\ref{lem:Ur}, 
one obvious choice of $\calV(r)$ is $\bar\calU(r)$, but other choices of $\calV(r)$ may exist as well. 
By Lemma~\ref{lem:h^*}, we know that 
 $h^*$ is $\mu_{\calS(r)}^{-1}$-smooth on $\calV(r)$, where 
\begin{equation}
 \calS(r) := \conv(\nabla h^*(\calV(r)))\subseteq \dom h. \label{eq:S_r}
 \end{equation}
In addition, recall 
that $\calU:=-\rvA^*(\calQ)$ for $\calQ:=\cl \dom f^*$ in~\eqref{eq:def_QU}. Based on $\calU$, $\bar\calU$ and $\calV(r)$, 
define 
\begin{equation}
K_{\calV(r)}:= \min\{k\ge 0: \; (1-\tau_k) u + \tau_k u' \in \calV(r), \quad \forall\,u\in \bar\calU, \; \forall\, u'\in \calU \} . \label{eq:def_Kr}
\end{equation}
It turns out that $K_{\calV(r)}$ is well-defined for any $0< r < \Delta$, and as shown in the lemma below, it admits a simple upper bound in terms of  $r$ and the ``furthest distance'' between $\bar\calU$ and $\calU$, namely
\begin{equation}
\ell_{\normt{\cdot}_*}(\bar\calU,\calU):= \max\{\normt{u-u'}_* :u\in\bar\calU, \, u'\in\calU \} .  \label{eq:def_ell}
\end{equation}

\begin{lemma} \label{lem:barU_U}
We have $\bar\calU\subseteq\calU$ and hence 
\begin{equation}
\diam_{\normt{\cdot}_*}(\calU)/2 \le \ell_{\normt{\cdot}_*}(\bar\calU,\calU)\le \diam_{\normt{\cdot}_*}(\calU). \label{eq:diam_ell} 
\end{equation} 
Under  Assumption~\ref{assum:open_dom}, for any $0< r < \Delta$, we have 
\begin{equation}
K_{\calV(r)}\le K_{\bar\calU(r)}\le  2\left\lceil \big(\ell_{\normt{\cdot}_*}(\bar\calU,\calU)/r - 1\big)_+\right\rceil,  \label{eq:K_r_ub}
\end{equation}
where $a_+:= \max\{a,0\}$. 

\end{lemma}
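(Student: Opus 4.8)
The plan is to handle the lemma's two assertions in turn; each is a short argument combining an elementary set inclusion with a triangle inequality, once the step-size $\tau_k$ is written out explicitly. For the first assertion, I would note that by the definition \eqref{eq:def_domD} of $\dom D$ together with the inclusion $\calL\subseteq\dom D$ recorded just after \eqref{eq:def_L}, one has $\calL\subseteq\dom D\subseteq\dom f^*\subseteq\cl\dom f^* = \calQ$; applying the linear map $-\rvA^*$ gives $\bar\calU = -\rvA^*(\calL)\subseteq -\rvA^*(\calQ) = \calU$. The upper bound $\ell_{\normt{\cdot}_*}(\bar\calU,\calU)\le\diam_{\normt{\cdot}_*}(\calU)$ in \eqref{eq:diam_ell} is then immediate from \eqref{eq:def_ell}, since $\bar\calU\subseteq\calU$ places both members of the maximizing pair inside $\calU$. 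For the lower bound I would use compactness of $\calU$ (Lemma~\ref{lem:compact}) to select $u_1,u_2\in\calU$ attaining $\diam_{\normt{\cdot}_*}(\calU)$, pick any $\bar u\in\bar\calU$ (nonempty because $\bars^0\in\calL$), and write $\diam_{\normt{\cdot}_*}(\calU)=\normt{u_1-u_2}_*\le\normt{u_1-\bar u}_* + \normt{\bar u - u_2}_*\le 2\,\ell_{\normt{\cdot}_*}(\bar\calU,\calU)$.

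For the second assertion, the inequality $K_{\calV(r)}\le K_{\bar\calU(r)}$ is a monotonicity observation: by \eqref{eq:V_r} we have $\bar\calU(r)\subseteq\calV(r)$, so every index $k$ occurring in the set whose minimum defines $K_{\bar\calU(r)}$ also occurs in the set defining $K_{\calV(r)}$. Hence it suffices to bound $K_{\bar\calU(r)}$. With the step-sizes in \ref{eq:step}, $\tau_k = \alpha_k/\beta_{k+1} = (k+1)/\big((k+1)(k+2)/2\big) = 2/(k+2)$. For any $u\in\bar\calU$ and $u'\in\calU$, writing $(1-\tau_k)u + \tau_k u' = u + \tau_k(u'-u)$ and using $u\in\bar\calU$ yields $\dist_{\normt{\cdot}_*}\big((1-\tau_k)u + \tau_k u',\,\bar\calU\big)\le\tau_k\normt{u'-u}_*\le\tau_k\,\ell_{\normt{\cdot}_*}(\bar\calU,\calU)$, so the point lies in $\bar\calU(r)$ for all such $u,u'$ as soon as $\tau_k\,\ell_{\normt{\cdot}_*}(\bar\calU,\calU)\le r$, i.e. $k\ge 2\big(\ell_{\normt{\cdot}_*}(\bar\calU,\calU)/r - 1\big)$. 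Taking $k^\ast := 2\big\lceil\big(\ell_{\normt{\cdot}_*}(\bar\calU,\calU)/r - 1\big)_+\big\rceil$, I would check $k^\ast\ge 2\big(\ell_{\normt{\cdot}_*}(\bar\calU,\calU)/r - 1\big)$: this is trivial when $\ell_{\normt{\cdot}_*}(\bar\calU,\calU)\le r$ (then $k^\ast = 0$) and follows from $\lceil x\rceil\ge x$ otherwise. Consequently $k^\ast$ is admissible, the index set is nonempty so $K_{\bar\calU(r)}$ (hence $K_{\calV(r)}$) is well-defined, and $K_{\bar\calU(r)}\le k^\ast$, which is \eqref{eq:K_r_ub}.

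I do not expect a genuine obstacle here; the only point deserving mild care is the closing ceiling bookkeeping — verifying that $2\lceil(\ell_{\normt{\cdot}_*}(\bar\calU,\calU)/r-1)_+\rceil$ is indeed large enough to force $\tau_{k^\ast}\,\ell_{\normt{\cdot}_*}(\bar\calU,\calU)\le r$ — together with the remark that only the ``sufficient'' direction of the membership condition is used, so one never needs to worry whether $\dist_{\normt{\cdot}_*}(\,\cdot\,,\bar\calU)$ might be strictly smaller than $\tau_k\normt{u'-u}_*$.
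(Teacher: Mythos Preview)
Your proposal is correct and follows essentially the same approach as the paper: the inclusion $\bar\calU\subseteq\calU$ via the chain $\calL\subseteq\dom D\subseteq\dom f^*\subseteq\calQ$, the triangle-inequality bound for \eqref{eq:diam_ell}, the monotonicity argument for $K_{\calV(r)}\le K_{\bar\calU(r)}$, and the distance bound $\dist_{\normt{\cdot}_*}\big((1-\tau_k)u+\tau_k u',\bar\calU\big)\le\tau_k\,\ell_{\normt{\cdot}_*}(\bar\calU,\calU)$ combined with $\tau_k=2/(k+2)$ all match. The only cosmetic difference is that the paper splits the final step into two explicit cases ($r\ge\ell$ versus $r<\ell$), whereas you handle both at once through the $(\cdot)_+$ and a brief case check; either way the arithmetic is identical.
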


\begin{proof}
For notational brevity, let $\ell:= \ell_{\normt{\cdot}_*}(\bar\calU,\calU)$. Since $\bar\calU(r)\subseteq \calV(r)$, we clearly have $K_{\calV(r)}\le K_{\bar\calU(r)}$. 
By~\eqref{eq:def_L},~\eqref{eq:def_domD} and
~\eqref{eq:def_QU}, we have $\calL\subseteq \dom D\subseteq \dom f^*\subseteq \calQ$, and hence $\bar\calU\subseteq\calU$. As a result, $\ell\le \diam_{\normt{\cdot}_*}(\calU)$. In addition, for any $\baru\in\bar\calU$, 
\begin{align}
\textstyle\diam_{\normt{\cdot}_*}(\calU)  = {\max}_{u,u'\in \calU}\; \normt{u-u'}_* &\le {\max}_{u,u'\in \calU}\; \normt{u-\baru}_* + \normt{u'-\baru}_* \le 2\ell. 
\end{align} 
This proves~\eqref{eq:diam_ell}. Next, we prove~\eqref{eq:K_r_ub} by considering two cases. 
If $r\ge \ell$, then for any $u\in\bar\calU$ and $u'\in\calU $, we have 
\begin{equation}
\dist_{\normt{\cdot}_*}(u',\bar\calU)\le \normt{u-u'}_* \le \ell\le r,
\end{equation}
and hence $u'\in \bar\calU(r) \subseteq \calV(r)$. Since $\tau_0 = 1$, for any $u\in\bar\calU$ and $u'\in\calU $,  we have  $(1-\tau_0) u + \tau_0 u' = u' \in \calV(r),$ and hence $K_{\calV(r)} = 0$. If $r< \ell$,
then let $k := 2\lceil \ell/r - 1\rceil$, and hence $\tau_k = 2/(k+2)\le r/\ell$. As a result, for any $u\in\bar\calU$ and $u'\in\calU $, we have 
\begin{equation}
\dist_{\normt{\cdot}_*}\big((1-\tau_k) u + \tau_k u',\bar\calU\big) \le \normt{(1-\tau_k) u + \tau_k u' - u}_* = \tau_k \normt{u' - u}_*\le (r/\ell) \ell  = r, 
\end{equation} 
and hence $(1-\tau_k) u + \tau_k u'\in \bar\calU(r) \subseteq \calV(r)$. Therefore, $K_{\calV(r)}\le k$ and we complete the proof. 
\end{proof}

\begin{remark}
Note that depending on the geometry of $\bar\calU$ and $\dom h^*$, for any  $0\le r< \Delta$, the set $\calV(r)$ can be chosen to be much larger than $\bar\calU(r)$, and hence $K_{\calV(r)}$ can be potentially much smaller than $K_{\bar\calU(r)}$.
As a simple example, let $\bbX^* := (\bbR^n,\normt{\cdot}_{\infty})$, $\dom h^* = (0,1)^n$ and  $\bar\calU = [\epsilon,\,2\epsilon]^n $, where $\normt{\cdot}_{\infty}$ denotes the $\ell_\infty$-norm and $\epsilon>0$ is small. For any $0\le r<\epsilon$, by definition, we have $\bar\calU(r) = [\epsilon-r,\,2\epsilon+r]^n $. In this case, we can choose   $\calV(r) = [a,\,b]^n $ for any $0<a \le  \epsilon-r$ and $2\epsilon +r\le b<1$, which can be much larger than $\bar\calU(r)$. 
\end{remark}

Equipped with all the preparatory results above, we are now ready to state the convergence rate of Algorithm~\ref{algo:MDA} under  Assumptions~\ref{assum:h} and~\ref{assum:open_dom}. 

\begin{theorem} \label{thm:analysis_algo2}
Fix any $0< r < \Delta$. Let $\calV(r)$ be a convex and compact set that satisfies~\eqref{eq:V_r}, and $K_{\calV(r)}$ be defined in~\eqref{eq:def_Kr}. 
Under  Assumptions~\ref{assum:h} and~\ref{assum:open_dom}, in Algorithm~\ref{algo:MDA}, we have 
\begin{equation}
D(\bars^k)\le D(\bars^0), \quad \forall\, 1\le k \le K_{\calV(r)}, 
\end{equation}
and $K_{\calV(r)}$ is upper bounded  in~\eqref{eq:K_r_ub}. In addition, for all $k\ge K_{\calV(r)}+1$,  we have 
\begin{align}
{\min}_{i=0}^{k-1}\; P(x^i) + D(\bar{s}^i) &\le  \frac{12(K_{\calV(r)}+1)^2 }{(k - K_{\calV(r)})(k+K_{\calV(r)})} \big(D(\bars^{K_{\calV(r)}}) - D_*\big) + \frac{26 \ell_{\normt{\cdot}_*}(\bar\calU,\calU)^2}{\mu_{\calS(r)}(k+K_{\calV(r)})}, \label{eq:algo2_rate}
\end{align}
where $\ell_{\normt{\cdot}_*}(\bar\calU,\calU)$ and ${\calS(r)}$ are defined in~\eqref{eq:def_ell} and~\eqref{eq:S_r}, respectively. In addition, 
let 
\begin{equation}
\hatx^k\in {\argmin}_{x\in\{x_0, \ldots\,x_{k}\}}\; P(x),\quad \forall\,  k\ge 0,  \label{eq:def_hatx}
\end{equation}
  then we have that for all $k\ge K_{\calV(r)}+1$,  
\begin{equation}
 P(\hatx^{k}) +  D(\bar{s}^k) \le \frac{{K_{\calV(r)}}(K_{\calV(r)}+1)}{k(k+1)} \left(P(\hatx^{K_{\calV(r)}}) +  D(\bar{s}^{K_{\calV(r)}})\right) + \frac{2\ell_{\normt{\cdot}_*}(\bar\calU,\calU)^2\big(k-K_{\calV(r)}\big)}{\mu_{\calS(r)} \, k(k+1)}. \label{eq:algo2_rate1.5} 
\end{equation}
\end{theorem}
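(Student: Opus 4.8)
The plan is to derive a single one-step inequality for the dual objective that holds at every iteration $k\ge K_{\calV(r)}$, regardless of whether that iteration is active or idle, and then feed it into Lemma~\ref{lem:seq} twice. The monotonicity assertion $D(\bars^k)\le D(\bars^0)$ for $1\le k\le K_{\calV(r)}$ is immediate from~\eqref{eq:D_monotone} (indeed it holds for all $k\ge 0$), and the upper bound on $K_{\calV(r)}$ is precisely~\eqref{eq:K_r_ub} in Lemma~\ref{lem:barU_U}; so the substance lies in the two rate estimates~\eqref{eq:algo2_rate} and~\eqref{eq:algo2_rate1.5}.

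The crux is the descent inequality: for every $k\ge K_{\calV(r)}$,
\[
D(\hats^k)\le D(\bars^k)-\tau_k\big(P(x^k)+D(\bars^k)\big)+\frac{\tau_k^2\,\ell_{\normt{\cdot}_*}(\bar\calU,\calU)^2}{2\mu_{\calS(r)}}.
\]
To prove it I would first note, using Lemma~\ref{lem:well_posed_2}, that $-\rvA^*\bars^k\in\bar\calU$ and $x^k=\nabla h^*(-\rvA^*\bars^k)$, while $g^k\in\partial f(\rvA x^k)\subseteq\calQ$ gives $-\rvA^* g^k\in\calU$; hence $-\rvA^*\hats^k=(1-\tau_k)(-\rvA^*\bars^k)+\tau_k(-\rvA^* g^k)\in\calV(r)\subseteq\dom h^*$ by the definition of $K_{\calV(r)}$ in~\eqref{eq:def_Kr}. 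Since $\calV(r)$ is convex and $h^*$ is $\mu_{\calS(r)}^{-1}$-smooth on it (Lemma~\ref{lem:h^*}), the standard descent lemma applied to $h^*$ at the pair $-\rvA^*\bars^k,\,-\rvA^*\hats^k$, combined with the convexity bound $f^*(\hats^k)\le(1-\tau_k)f^*(\bars^k)+\tau_k f^*(g^k)$, the identities $\hats^k-\bars^k=\tau_k(g^k-\bars^k)$ and $\nabla h^*(-\rvA^*\bars^k)=x^k$, and the two Fenchel equalities $f^*(g^k)=\ipt{g^k}{\rvA x^k}-f(\rvA x^k)$ (from $g^k\in\partial f(\rvA x^k)$) and $h^*(-\rvA^*\bars^k)=-\ipt{\bars^k}{\rvA x^k}-h(x^k)$ (from $x^k=\nabla h^*(-\rvA^*\bars^k)$), collapses the cross terms to the coefficient $-(P(x^k)+D(\bars^k))$ on $\tau_k$; finally $\normt{\rvA^*(g^k-\bars^k)}_*=\normt{(-\rvA^*\bars^k)-(-\rvA^* g^k)}_*\le\ell_{\normt{\cdot}_*}(\bar\calU,\calU)$ bounds the quadratic term.

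Next I would lift this to a bound on $D(\bars^{k+1})$ valid in both branches. On an active iteration $D(\bars^{k+1})=D(\hats^k)$, so the inequality transfers directly; on an idle iteration $D(\hats^k)\ge D(\bars^k)$ forces $\tau_k(P(x^k)+D(\bars^k))\le\tau_k^2\ell_{\normt{\cdot}_*}(\bar\calU,\calU)^2/(2\mu_{\calS(r)})$, and since $D(\bars^{k+1})=D(\bars^k)$ the same bound holds. Thus, for all $k\ge K_{\calV(r)}$,
\[
D(\bars^{k+1})\le D(\bars^k)-\tau_k\big(P(x^k)+D(\bars^k)\big)+\frac{\tau_k^2\,\ell_{\normt{\cdot}_*}(\bar\calU,\calU)^2}{2\mu_{\calS(r)}}.
\]
Now I would apply Lemma~\ref{lem:seq} with $k_0:=K_{\calV(r)}$ and $A:=\ell_{\normt{\cdot}_*}(\bar\calU,\calU)^2/\mu_{\calS(r)}$. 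For~\eqref{eq:algo2_rate}, take $a_k:=D(\bars^k)-D_*$ and $b_k:=P(x^k)+D(\bars^k)$: both are nonnegative (by strong duality $P_*=-D_*$, so $D(\bars^k)\ge D_*$ and $P(x^k)+D(\bars^k)\ge P_*+D_*=0$), the requirement $a_k\le b_k$ reduces to $P(x^k)\ge P_*$, and the displayed recursion is exactly~\eqref{eq:recur}; conclusion~\eqref{eq:b_k} then yields~\eqref{eq:algo2_rate} after enlarging the minimization index set from $\{\lfloor(k+k_0)/2\rfloor,\dots,k-1\}$ to $\{0,\dots,k-1\}$. For~\eqref{eq:algo2_rate1.5}, take $a_k=b_k:=P(\hatx^k)+D(\bars^k)$: nonnegativity and $a_k\le b_k$ are trivial, and using $P(\hatx^{k+1})\le P(\hatx^k)$ together with $P(x^k)\ge P(\hatx^k)$ one verifies that the displayed recursion implies~\eqref{eq:recur} for this choice; conclusion~\eqref{eq:a_k} gives~\eqref{eq:algo2_rate1.5}. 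The main obstacle I anticipate is the bookkeeping in the descent inequality---keeping the adjoint pairings straight and getting the two Fenchel equalities to cancel the cross terms down to exactly $-(P(x^k)+D(\bars^k))$---together with the small but essential observation that the descent inequality holds on idle iterations as well, which is what makes the single recursion valid for every $k\ge K_{\calV(r)}$ and hence permits the two-fold use of Lemma~\ref{lem:seq}.
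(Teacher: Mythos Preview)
Your proposal is correct and follows essentially the same route as the paper. Two minor polish points: the paper dispatches both branches at once via $D(\bars^{k+1})=\min\{D(\bars^k),D(\hats^k)\}\le D(\hats^k)$ rather than splitting into active/idle cases; and the assertion $-\rvA^*\hats^k\in\calV(r)$ for $k>K_{\calV(r)}$ does not fall out of~\eqref{eq:def_Kr} alone (that definition only pins down the minimal index)---the paper writes $\hats^k$ as a convex combination of $\bars^k$ and $\bars^k+\tau_{K_{\calV(r)}}(g^k-\bars^k)$, using $\tau_k\le\tau_{K_{\calV(r)}}$ and $-\rvA^*\bars^k\in\bar\calU\subseteq\calV(r)$, to place $-\rvA^*\hats^k$ inside the convex set $\calV(r)$.
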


\begin{proof}
By Lemma~\ref{lem:well_posed_2},  we know that for all $k\ge 0$, $\bars^k\in\calL$ and 
 $-\rvA^* \bars^k \in \barcalU\subseteq \calV(r)$. 
For $k\ge 0$, since $g^k\in\calQ$, we have  $-\rvA^* g^k\in\calU$. Thus   by the definition of $K_{\calV(r)}$ in~\eqref{eq:def_Kr}, 
 we have 
\begin{equation}
-\rvA^*(\bars^k +    \tau_{K_{\calV(r)}} (g^k -\bars^k))\in \calV(r), \quad \forall\, k\ge 0. 
\end{equation} 
Since $\{\tau_k\}_{k\ge 0}$ is monotonically decreasing, for all $k\ge K_{\calV(r)}$, we have ${\tau_k}/{\tau_{K_{\calV(r)}}} \in (0, 1)$. Since 
\begin{equation}
\hats^k = \bars^k + \tau_k (g^k -\bars^k) = (1- {\tau_k}/{\tau_{K_{\calV(r)}}} )  \bars^k + ({\tau_k}/{\tau_{K_{\calV(r)}}}) (\bars^k +    \tau_{K_{\calV(r)}} (g^k -\bars^k)),
\end{equation}
and $\calV(r)$ is convex, we have $-\rvA^*\hat{s}^k\in\calV(r)$. From Lemma~\ref{lem:h^*}, we know that $h^*$ is $\mu_{\calS(r)}^{-1}$-smooth on $\calV(r)$, and hence for all $k\ge K_{\calV(r)}$,
\begin{align}
\hspace{-.8em}  h^*(-\rvA^*\hat{s}^k) &\le h^*(-\rvA^*\bar{s}^k) -\ipt{\nabla h^*(-\rvA^*\bars^{k})}{\rvA^*(\hat{s}^k -\bars^k)} +  \frac{\normt{\rvA^*(\hat{s}^k -\bars^k)}_*^2}{2\mu_{\calS(r)}}\\
&\le h^*(-\rvA^*\bar{s}^k) - \tau_k\ipt{\rvA x^k}{g^k -\bars^k} +  \tau_k^2\frac{\normt{\rvA^*(g^k -\bars^k)}_*^2}{2\mu_{\calS(r)}}\\
&\le h^*(-\rvA^*\bar{s}^k) - \tau_k\big(h^*(-\rvA^*\bars^k) + h(x^k) + f^*(g^k) + f(\rvA x^k)\big) + \tau_k^2\frac{\ell_{\normt{\cdot}_*}(\bar\calU,\calU)^2}{2\mu_{\calS(r)}} \label{eq:algo2_h*}
\end{align}
where we use $x^{k}= \nabla h^*(-\rvA^*\bars^{k})$ (cf.\ Lemma~\ref{lem:well_posed_2}), $g^k\in \partial f(\rvA x^k)$ 
 and the definition of $\ell_{\normt{\cdot}_*}(\bar\calU,\calU)$ in~\eqref{eq:def_ell}. Also, by the convexity of $f^*$, we have 
\begin{align}
f^*(\hat{s}^k) \le (1-\tau_k) f^*(\bar{s}^k) + \tau_k f^*(g^k) = f^*(\bar{s}^k) - \tau_k(f^*(\bar{s}^k) - f^*(g^k)). \label{eq:algo2_f*}
\end{align}
Combining~\eqref{eq:algo2_h*} and~\eqref{eq:algo2_f*}, and use~\eqref{eq:D_monotone},   we have that for all $k\ge K_{\calV(r)}$, 
\begin{align}
D(\bars^{k+1}) - D_* \le D(\hat{s}^k) - D_* \le (D(\bar{s}^k)  - D_*) - \tau_k (P(x^k) + D(\bar{s}^k) ) +  \tau_k^2\frac{\ell_{\normt{\cdot}_*}(\bar\calU,\calU)^2}{2\mu_{\calS(r)}}. \label{eq:D_bars}
\end{align}
Since $P(x^k)\ge P_* = -D_*$, we have $P(x^k) + D(\bar{s}^k)\ge D(\bar{s}^k)  - D_*$, and hence we can invoke~\eqref{eq:b_k} in Lemma~\ref{lem:seq} to obtain~\eqref{eq:algo2_rate}. In addition, by the definition of $\hatx^{k}$, we have
\begin{equation}
P(\hatx^{k+1}) - P_*\le P(\hatx^{k}) - P_*, \quad \forall\, k\ge 0.  \label{eq:P_hatx}
\end{equation}
 Since $P_* = -D_*$, combining~\eqref{eq:D_bars} and~\eqref{eq:P_hatx},  we have 
\begin{align}
P(\hatx^{k+1}) + D(\bars^{k+1}) &\le (P(\hatx^{k}) +  D(\bar{s}^k)) - \tau_k (P(x^k) + D(\bar{s}^k) ) +  \tau_k^2\frac{\ell_{\normt{\cdot}_*}(\bar\calU,\calU)^2}{2\mu_{\calS(r)}},\quad \forall\, k\ge K_{\calV(r)}.\nn 
\end{align}
Since $P(x^k) + D(\bar{s}^k)\ge P(\hatx^{k}) +  D(\bar{s}^k)$ for $k\ge 0$, we can invoke~\eqref{eq:a_k} in Lemma~\ref{lem:seq} and arrive at~\eqref{eq:algo2_rate1.5}. 
\end{proof}

\begin{remark}[Interpreting Theorem~\ref{thm:analysis_algo2}] \label{rmk:interpret_algo2}
From Theorem~\ref{thm:analysis_algo2}, we see that the analysis of the convergence rate of Algorithm~\ref{algo:MDA} is divided into two phases. In the first phase (i.e., $1\le k \le K_{\calV(r)}$), 
we are not able to  provide convergence rate guarantees on the dual objective gap or the primal-dual gap, except that the the dual objective gap does not increase. This is because 
it could be the case that the ``trial iterate'' $\hats^k\not\in \dom D$ for any $0\le k < K_{\calV(r)}$ (and hence $\bars^k = \bars^0$ for $1\le k \le K_{\calV(r)}$), and in this case, we have no information on the current dual objective value $D(\bars^0)$. 
However, in the second phase (i.e., $k\ge  K_{\calV(r)}$), 
we know that $-\rvA^*\hats^k\in \calV(r)$, and by the $\mu_{\calS(r)}^{-1}$-smoothness of $h^*$ on $\calV(r)$, we can upper bound $D(\hats^k)$   in a concrete way (cf.~\eqref{eq:D_bars}). 
Since $D(\bars^{k+1}) = \min\{D(\bars^{k}), D(\hats^{k})\}\le D(\hats^{k})$, this provides an upper bound on $D(\bars^{k+1})$ as well, which in turn allows us to derive the convergence rate of (various forms of) the primal-dual gap 
for $k\ge  K_{\calV(r)}+1$.

\begin{remark}[Iteration and Oracle Complexities of Algorithm~\ref{algo:MDA}] \label{rmk:complexity}
From~\eqref{eq:algo2_rate} and Lemma~\ref{lem:barU_U},  some simple algebra reveal that to achieve an $\varepsilon$-primal-dual gap, the number of iterations needed by Algorithm~\ref{algo:MDA} is of order 
\begin{equation}
O\left(\max\left\{\frac{\ell_{\normt{\cdot}_*}(\bar\calU,\calU)}{\Delta}\sqrt{\frac{D(\bars^0) - D_*}{\varepsilon}}, \; \frac{\ell_{\normt{\cdot}_*}(\bar\calU,\calU)^2}{\mu_{\calS(r)}\,\varepsilon}\right\}\right), \label{eq:iter_comp_algo2}
\end{equation}
where $\Delta$ is defined in~\eqref{eq:def_Delta}. As mentioned in Section~\ref{sec:intro_algo_2}, Algorithm~\ref{algo:MDA} uses three types of oracles, namely ($\calO_1$) the zeroth-order oracle of the dual objective function $D$, ($\calO_2$) the sub-problem minimization oracle associated with $h$ (cf.~\eqref{eq:subprob}) and ($\calO_3$) the first-order oracle of $f$. 
Note that in the first $K$ iterations of  Algorithm~\ref{algo:MDA}, the number of oracle calls of $\calO_1$
is clearly $K$. 
In contrast, the number of  oracle calls of $\calO_2$ and $\calO_3$ is equal to the number of ``active'' iterations within the first $K$ iterations, which we denote by $K_{\rm act}$. For some problem instances, $K_{\rm act}$ may be much lower than  $K$, however, this may not be the case in general. 
\end{remark}

\end{remark}

\begin{remark}[Different forms of the primal-dual gap] \label{rmk:two_duality_gaps}
Note that for $k\ge K_{\calV(r)}+1$, Theorem~\ref{thm:analysis_algo2} provides the convergence rates of two forms of the primal-dual gap,  namely ${\min}_{i=0}^{k-1}\; P(x^i) + D(\bar{s}^i)$ and ${\min}_{i=0}^{k}\; P(x^i) + D(\bar{s}^k)$. Since $\{D(\bar{s}^k)\}_{k\ge 0}$ is monotone, we have 
\begin{equation}
{\min}_{i=0}^{k}\; P(x^i) + D(\bar{s}^i)\ge {\min}_{i=0}^{k}\; P(x^i) + {\min}_{i=0}^{k}\;D(\bar{s}^i) = {\min}_{i=0}^{k}\; P(x^i) + D(\bar{s}^k),
\end{equation}
and therefore, the convergence rate in~\eqref{eq:algo2_rate} (with $k$ replaced by $k+1$) is also valid for ${\min}_{i=0}^{k}\; P(x^i) + D(\bar{s}^k)$. As a result, we can take the  convergence rate of ${\min}_{i=0}^{k}\; P(x^i) + D(\bar{s}^k)$ 
to be the minimum of the rates in~\eqref{eq:algo2_rate1.5} and~\eqref{eq:algo2_rate}  (with $k$ replaced by $k+1$). 
\end{remark}

Next, a natural question one may have is whether Algorithm~\ref{algo:MDA} also works in the setting of Section~\ref{sec:DA_analysis}, i.e., under Assumptions~\ref{assum:h} and~\ref{assum:Q}. The theorem below provides an affirmative answer. In fact, 
Algorithm~\ref{algo:MDA} shares similar computational guarantees to Algorithm~\ref{algo:DA} in this setting. 

\begin{theorem} \label{thm:conv_rate_algo2}
Under  Assumptions~\ref{assum:h} and~\ref{assum:Q}, in Algorithm~\ref{algo:MDA}, we have that 
for all $k\ge 1$,  
\begin{align}
{\min}_{i=0}^{k-1}\; P(x^i) + D(\bar{s}^i) &\le  \frac{12 }{k^2 } \big(D(\bars^0) - D_*\big) + \frac{26 \ell_{\normt{\cdot}_*}(\bar\calU,\calU)^2}{\mu_{\bar\calS}\,k}, \quad \andd\label{eq:algo2_rate2}\\
P(\hatx^{k}) +  D(\bar{s}^k) &\le  \frac{2\ell_{\normt{\cdot}_*}(\bar\calU,\calU)^2}{\mu_{\bar\calS} \, (k+1)}, \label{eq:algo2_rate2.5} 
\end{align}
where $\ell_{\normt{\cdot}_*}(\bar\calU,\calU)$, $\bar\calS$ and $\hatx^{k}$ are defined in~\eqref{eq:def_ell},~\eqref{eq:def_barS} and~\eqref{eq:def_hatx}, respectively. 
\end{theorem}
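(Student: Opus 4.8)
The plan is to observe that, under Assumption~\ref{assum:Q}, the ``warm-up'' phase of Algorithm~\ref{algo:MDA} is vacuous: one may take the auxiliary set $\calV(r)$ in the analysis of Theorem~\ref{thm:analysis_algo2} to be $\calU=-\rvA^*(\calQ)$ itself, with $K_{\calV(r)}=0$. The two claimed bounds then follow from Lemma~\ref{lem:seq} with $k_0=0$.

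First I would re-establish the well-definedness facts in the present setting, since Lemma~\ref{lem:well_posed_2} was proved under Assumption~\ref{assum:open_dom}. Arguing by induction exactly as in the proof of Lemma~\ref{lem:well_posed}: $\bars^0\in\dom D\subseteq\dom f^*\subseteq\calQ$; each $g^k\in\partial f(\rvA x^k)\subseteq\dom f^*\subseteq\calQ$; $\hats^k=(1-\tau_k)\bars^k+\tau_k g^k$ is a convex combination of points of the convex set $\calQ$; and the acceptance rule keeps $\bars^{k+1}\in\{\bars^k,\hats^k\}\subseteq\calQ$. Hence $-\rvA^*\bars^k$ and $-\rvA^*\hats^k$ lie in $\calU$ for \emph{every} $k\ge0$, and by Assumption~\ref{assum:Q}, $\calU\subseteq\inter\dom h^*$. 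Combining Lemma~\ref{lem:h^*_diff} (differentiability of $h^*$ on $\inter\dom h^*$) with the optimality condition for $x^k=\argmin_x\,\ipt{\bars^k}{\rvA x}+h(x)$ yields $x^k=\nabla h^*(-\rvA^*\bars^k)\in\barcalS$, where $\barcalS=\conv(\nabla h^*(\calU))$ is, by Lemma~\ref{lem:compact}, convex, compact, contained in $\dom h$, and a set on which $h$ is $\mu_{\barcalS}$-strongly convex. The duality argument in the proof of Lemma~\ref{lem:h^*} (which uses only $\calU\subseteq\inter\dom h^*$) then shows that $h^*$ is $\mu_{\barcalS}^{-1}$-smooth on $\calU$. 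Finally, by Lemma~\ref{lem:barU_U} we have $\bar\calU\subseteq\calU$, so $\ell_{\normt{\cdot}_*}(\bar\calU,\calU)$ is finite and $\normt{\rvA^*(g^k-\bars^k)}_*\le\ell_{\normt{\cdot}_*}(\bar\calU,\calU)$ for all $k$.

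Next I would repeat the chain of inequalities in the proof of Theorem~\ref{thm:analysis_algo2}, with $\calV(r)$ replaced by $\calU$ and $\calS(r)$ by $\barcalS$. Since the sole purpose of the threshold $K_{\calV(r)}$ there was to secure $-\rvA^*\hats^k\in\calV(r)$ --- which in the present setting holds for every $k\ge0$ by the previous paragraph --- the recursion~\eqref{eq:D_bars}, as well as the companion recursion for $P(\hatx^{k+1})+D(\bars^{k+1})$ derived just before~\eqref{eq:algo2_rate1.5}, hold for all $k\ge0$; they use the $\mu_{\barcalS}^{-1}$-smoothness of $h^*$ on $\calU$, the convexity of $f^*$, the Fenchel--Young equality $\ipt{g^k}{\rvA x^k}=f(\rvA x^k)+f^*(g^k)$, the identity $x^k=\nabla h^*(-\rvA^*\bars^k)$, the dual monotonicity~\eqref{eq:D_monotone}, and $P_*=-D_*$. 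With $A:=\ell_{\normt{\cdot}_*}(\bar\calU,\calU)^2/\mu_{\barcalS}$ and $k_0=0$ in Lemma~\ref{lem:seq}: for~\eqref{eq:algo2_rate2} take $a_k:=D(\bars^k)-D_*$ and $b_k:=P(x^k)+D(\bars^k)$, both nonnegative, with $a_k\le b_k$ since $P(x^k)\ge P_*$; then~\eqref{eq:b_k} together with $\min_{i=0}^{k-1}b_i\le\min_{i=\lfloor k/2\rfloor}^{k-1}b_i$ gives~\eqref{eq:algo2_rate2}. For~\eqref{eq:algo2_rate2.5} take $a_k:=P(\hatx^k)+D(\bars^k)$ and the same $b_k$, with $a_k\le b_k$ since $P(\hatx^k)\le P(x^k)$; then~\eqref{eq:a_k} gives $a_k\le 2Ak/(k(k+1))=2A/(k+1)$, which is exactly~\eqref{eq:algo2_rate2.5}, the term $k_0(k_0+1)a_{k_0}$ dropping out precisely because $k_0=0$.

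The main obstacle --- essentially the only non-mechanical point --- is justifying that the threshold-based analysis of Theorem~\ref{thm:analysis_algo2} collapses to the case $K_{\calV(r)}=0$ here: one must re-derive, under Assumption~\ref{assum:Q} in place of Assumption~\ref{assum:open_dom}, that every iterate remains in $\calQ$ and that $x^k=\nabla h^*(-\rvA^*\bars^k)\in\barcalS$, and check that the smoothness step (Lemma~\ref{lem:h^*}) used only $\calU\subseteq\inter\dom h^*$. Once these are in hand, the rest is a direct specialization of Theorem~\ref{thm:analysis_algo2} with $k_0=0$.
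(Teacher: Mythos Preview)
Your proposal is correct and takes essentially the same approach as the paper: observe that under Assumption~\ref{assum:Q} every $\bars^k,g^k,\hats^k$ remains in $\calQ$ so that $-\rvA^*\hats^k\in\calU\subseteq\inter\dom h^*$ for all $k\ge 0$, invoke the smoothness argument of Lemma~\ref{lem:h^*} on $\calU$ to get the recursion~\eqref{eq:D_bars} from $k=0$, and then apply Lemma~\ref{lem:seq} with $k_0=0$. You are actually a bit more careful than the paper in explicitly re-establishing well-definedness (since Lemma~\ref{lem:well_posed_2} was stated under Assumption~\ref{assum:open_dom}), which the paper leaves implicit.
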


\begin{proof}
The proof follows the same line of reasoning as that of Theorem~\ref{thm:analysis_algo2}. First, note that under Assumption~\ref{assum:Q}, we have $\bar\calU\subseteq \calU\subseteq \inter \dom h^*$. As a result, we have $-\rvA^*\hat{s}^k\in\calU$ for all $k\ge 0$. From Lemma~\ref{lem:h^*}, we know that $h^*$ is $\mu_{\bar\calS}^{-1}$-smooth on $\calU$, 
and we deduce that  
\begin{align}
D(\bars^{k+1}) - D_*\le  (D(\bar{s}^k)  - D_*) - \tau_k (P(x^k) + D(\bar{s}^k) ) +  \tau_k^2\frac{\ell_{\normt{\cdot}_*}(\bar\calU,\calU)^2}{2\mu_{\bar\calS}}, \quad \forall\,k\ge 0.  
\end{align}
Invoking~\eqref{eq:b_k} in Lemma~\ref{lem:seq} with $k_0=0$, we arrive at~\eqref{eq:algo2_rate2}. 
Also, by~\eqref{eq:P_hatx} and $P_* = -D_*$, we  have
\begin{align}
P(\hatx^{k+1}) + D(\bars^{k+1}) \le  (P(\hatx^{k}) + D(\bar{s}^k))   - \tau_k (P(x^k) + D(\bar{s}^k) ) +  \tau_k^2\frac{\ell_{\normt{\cdot}_*}(\bar\calU,\calU)^2}{2\mu_{\bar\calS}}, \quad \forall\,k\ge 0.  
\end{align}
Invoking~\eqref{eq:a_k} in Lemma~\ref{lem:seq} with $k_0=0$, we arrive at~\eqref{eq:algo2_rate2.5}.
\end{proof}

\begin{remark}
Similar to Remark~\ref{rmk:two_duality_gaps}, the convergence rate in~\eqref{eq:algo2_rate2} (with $k$ replaced by $k+1$) also applies to ${\min}_{i=0}^{k}\; P(x^i) + D(\bar{s}^k)$. However, note that this rate is strictly inferior to the one in~\eqref{eq:algo2_rate2.5}. 
\end{remark}

\begin{remark}[Comparison between 
 Theorems~\ref{thm:pdgap} and~\ref{thm:conv_rate_algo2}]
Under Assumptions~\ref{assum:h} and~\ref{assum:Q}, Theorems~\ref{thm:pdgap} and~\ref{thm:conv_rate_algo2} provide the convergence rates of Algorithms~\ref{algo:DA} and~\ref{algo:MDA},  respectively. 
At a high level, Theorems~\ref{thm:pdgap} and~\ref{thm:conv_rate_algo2} indicate that both Algorithms~\ref{algo:DA} and~\ref{algo:MDA} converge at rate $O(1/k)$ in terms of 
the primal-dual gap. However, note that the convergence rates in these two theorems actually concern different forms of the primal-dual gaps, and also depend on different quantities. The differences arise from the different structures of Algorithms~\ref{algo:DA} and~\ref{algo:MDA}, as well as the different analytic approaches. Specifically, the analysis of Algorithm~\ref{algo:DA} mainly proceeds on the primal side, and is based on the sequence of auxiliary functions $\{\psi_k\}_{k\ge 0}$; in contrast, the analysis of Algorithm~\ref{algo:MDA} mainly proceeds on the dual side, and is based on the $\mu_{\bar\calS}^{-1}$-smoothness of $h^*$.  
\end{remark}

\begin{remark}[Align 
Theorem~\ref{thm:conv_rate_algo2} with 
Theorem~\ref{thm:pdgap}]
Note that the convergence rate of Algorithm~\ref{algo:DA} in Theorem~\ref{thm:pdgap} depends on two quantities, namely $\diam_{\normt{\cdot}_*}(\calU)$ and $\mu_{\barcalS}$. In contrast, the convergence rate~\eqref{eq:algo2_rate2} 
in Theorem~\ref{thm:conv_rate_algo2} involves three quantities, namely %
$D(\bars^0) - D_*$ (i.e., the initial dual objective gap),  $\ell_{\normt{\cdot}_*}(\bar\calU,\calU)$ 
and $\mu_{\bar\calS}$. To align the convergence rate result in Theorem~\ref{thm:conv_rate_algo2} with that in Theorem~\ref{thm:pdgap}, first note that $\ell_{\normt{\cdot}_*}(\bar\calU,\calU)\le \diam_{\normt{\cdot}_*}(\calU)$ by Lemma~\ref{lem:barU_U}. Next, if $\bars^0$ is chosen via a ``pre-start'' procedure, then $D(\bars^0) - D_*$ can be upper bounded by some quantity that depends on $\diam_{\normt{\cdot}_*}(\calU)$ and $\mu_{\bar\calS}$. 
Specifically, let $\bars^{-1}$ be any point in $\dom D$, $x^{-1}:= \argmin_{x\in\bbX}\; \ipt{\bars^{-1}}{\rvA x} + h(x)$ 
and $\bars^0\in \partial f(\rvA x^{-1})$. Since both $\bars^{-1}, \bars^0\in \dom f^*,$ we have both $-\rvA^*\bars^{-1}, -\rvA^*\bars^0\in \calU.$ Using the same proof of Lemma~\ref{lem:h^*}, we can show that under Assumptions~\ref{assum:h} and~\ref{assum:Q}, $h^*$ is $\mu_{\bar\calS}^{-1}$-smooth on $\calU$, and so we have 
\begin{align*}
h^*(-\rvA^*\bars^0)&\le h^*(-\rvA^*\bars^{-1}) - \ipt{\rvA x^{-1}}{\bars^0 - \bars^{-1}} + {\normt{\rvA^*(\bars^0- \bars^{-1})}^2_*}/({2\mu_{\bar\calS}})\\
&\le h^*(-\rvA^*\bars^{-1}) - \big(h^*(-\rvA^*s^{-1}) + h(x^{-1}) + f(\rvA x^{-1}) + f^*(\bars^0) \big) + {\diam_{\normt{\cdot}_*}(\calU)^2}/({2\mu_{\bar\calS}})\\
&\le -P(x^{-1}) - f^*(\bars^0)  + {\diam_{\normt{\cdot}_*}(\calU)^2}/({2\mu_{\bar\calS}}),
\end{align*}
where we use $x^{-1} = \nabla h^*(-\rvA^*\bars^{-1}) $ and $\bars^0\in \partial f(\rvA x^{-1})$. As a result, we have  
\begin{align*}
D(s^0) - D_*\le P(x^{-1})+D(s^0)\le {\diam_{\normt{\cdot}_*}(\calU)^2}/({2\mu_{\bar\calS}}). 
\end{align*}
As a result,~\eqref{eq:algo2_rate2} now becomes
\begin{align}
{\min}_{i=0}^{k-1}\; P(x^i) + D(\bar{s}^i) &\le  \frac{ \diam_{\normt{\cdot}_*}(\calU)^2}{\mu_{\bar\calS}}\left(\frac{6}{k^2} + \frac{13}{k} \right), \quad \forall\, k\ge 1.  
\end{align}
\end{remark}


\subsection{Certificates for Assumption~\ref{assum:open_dom} } \label{sec:assump_dom_open}

In this section we provide two conditions on $h$ that ensure $\dom h^*$ to be open, along with illustrating examples.  Let us start with two definitions.

\begin{definition}[{Recession Function;~\cite[Theorem~8.5]{Rock_70}}]
Given a proper, closed and convex function $h:\bbX\to\barbbR$, define its recession function $r_h:\bbX\to\barbbR$ as 
\begin{equation}
r_h(v) = {\sup}_{x\in\dom h}\;  h(x+v) - h(x), \quad \forall\, v\in \bbX.  \label{eq:def_r_h}
\end{equation}
In addition, $r_h$ is proper, closed, convex and positively homogeneous. 
\end{definition}

\begin{definition}[Affine Attainment] \label{def:aff_att}
A function  
$h:\bbX\to\barbbR$ is called {\em affine attaining} if for any $u\in\bbX^*$, if $g_u:= h - \ipt{u}{\cdot}$ is lower bounded, then $g_u$ has a minimizer on $\bbX$. 
\end{definition}

\begin{remark}
Two remarks are in order. First, note that by the definition of $h^*$, 
$g_u:= h - \ipt{u}{\cdot}$ being lower bounded is equivalent to $u\in \dom h^*$. However, we prefer to use the former statement in Definition~\ref{def:aff_att} since it does not (explicitly) involve $h^*$. Second, by~\cite[Theorem~2.2.8]{Renegar_01}, the class of (standard, strongly, non-degenerate) self-concordant functions are indeed affine attaining. 
\end{remark}


As an important observation, $h$ being affine attaining is necessary for $\dom h^*$ to be open. 

\begin{prop} \label{prop:affine_attaining}
Let $h:\bbX\to\barbbR$ be proper, closed and convex. If $\dom h^*$ is open, then $h$ must be affine attaining. 
\end{prop}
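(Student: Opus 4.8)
The plan is to prove the contrapositive: if $h$ is not affine attaining, then $\dom h^*$ is not open. So suppose there exists $u\in\bbX^*$ such that $g_u := h - \ipt{u}{\cdot}$ is lower bounded (equivalently $u\in\dom h^*$, as noted in the remark) but $g_u$ has no minimizer on $\bbX$. I would show that this $u$ must be a \emph{boundary} point of $\dom h^*$, which immediately contradicts openness. Since $g_u$ is lower bounded, $u\in\dom h^*$, so it suffices to show $u\notin\inter\dom h^*$.

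The key step is the following characterization: if $u\in\inter\dom h^*$, then $g_u$ attains its minimum. This is essentially the statement used repeatedly in the excerpt (e.g. in the proof of Lemma~\ref{lem:h^*_diff} and Lemma~\ref{lem:h^*}) via \cite[Fact 2.11]{Bauschke_97}: for $u\in\inter\dom h^*$, the function $g_u = h - \ipt{u}{\cdot}$ is coercive, hence (being proper, closed and convex) attains its infimum on $\bbX$. Applying the contrapositive of this: since $g_u$ does \emph{not} attain its infimum, we must have $u\notin\inter\dom h^*$. Combined with $u\in\dom h^*$, this gives $u\in\dom h^*\setminus\inter\dom h^*$, so $\dom h^*$ is not open.

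I should double-check the precise hypotheses of \cite[Fact 2.11]{Bauschke_97}. The relevant direction is: $u\in\inter\dom h^*$ implies $h - \ipt{u}{\cdot}$ has bounded (nonempty) sublevel sets, i.e. is coercive/supercoercive in the appropriate sense; this is a standard Fenchel-conjugate fact (the interior of $\dom h^*$ consists exactly of the directions $u$ such that $h$ grows faster than the linear functional $\ipt{u}{\cdot}$ in every direction). Then a proper, closed, convex function with a nonempty bounded sublevel set attains its infimum on $\bbX$ (the sublevel set is compact, the function is lower semicontinuous). So $\argmin g_u \neq\emptyset$, contradicting our assumption.

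I do not anticipate a serious obstacle — the proposition is essentially an unpacking of the standard coercivity-on-the-interior fact already invoked elsewhere in the paper. The only mild care needed is to state cleanly that lower-boundedness of $g_u$ is equivalent to $u\in\dom h^*$ (so the hypothesis of the proposition is really about boundary points of $\dom h^*$), and to cite the coercivity fact with the correct direction. Concretely, the proof reads: Suppose $\dom h^*$ is open. Let $u\in\bbX^*$ be such that $g_u := h - \ipt{u}{\cdot}$ is lower bounded. Then $u\in\dom h^*=\inter\dom h^*$, so by \cite[Fact 2.11]{Bauschke_97}, $g_u$ is coercive, and since $g_u$ is proper, closed and convex, it attains its infimum on $\bbX$. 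Hence $h$ is affine attaining.
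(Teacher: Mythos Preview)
Your proposal is correct and takes essentially the same approach as the paper: the paper's proof is precisely the direct argument you state at the end --- $g_u$ lower bounded gives $u\in\dom h^*=\inter\dom h^*$ by openness, then \cite[Fact~2.11]{Bauschke_97} yields coercivity, and proper + closed + coercive gives a minimizer. Your initial contrapositive framing is just exposition; the substance is identical.
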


\begin{proof}
If for some $u\in\bbX^*$, $g_u:= h - \ipt{u}{\cdot}$ is lower bounded, then 
we have $u\in\dom h^*$. Since $\dom h^*$ is open, we have $\dom h^* = \inter\dom h^*$ and hence $u\in \inter \dom h^*$. By~\cite[Fact 2.11]{Bauschke_97}, we know that $g_u$ is coercive. Since $g_u$ is additionally proper and closed, we know that it has a minimizer on $\bbX$. 
\end{proof}

The following proposition provides an equivalent characterization of $\dom h^*$ being open. 

\begin{prop}[{\cite[Corollary~13.3.4(c)]{Rock_70}}] \label{prop:recession_equiv}
Let $h:\bbX\to\barbbR$ be a proper, closed and convex function. Then $\dom h^*$ is open if and only if for all $u\in\bbX^*$ such that $g_u:= h - \ipt{u}{\cdot}$ is lower bounded, 
$r_h(v) > \ipt{u}{v}$ for all $v\ne 0$. 
\end{prop}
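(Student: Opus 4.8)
The plan is to derive this from two classical ingredients: the identity that the recession function of $h$ equals the support function of $\dom h^*$, and the elementary fact that a nonempty convex set is open precisely when no nonzero linear functional attains its supremum at a point of the set. I would proceed in three steps.

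\emph{Step 1 (reformulation).} By the Remark immediately preceding the statement (equivalently, straight from the definition of the conjugate), $g_u = h - \ipt{u}{\cdot}$ is lower bounded iff $u\in\dom h^*$; and since $h$ is proper, closed and convex, $h^*$ is proper, so $\dom h^*$ is a nonempty convex subset of $\bbX^*$. By \cite[Theorem~13.3]{Rock_70} the recession function of $h$ is the support function of $\dom h^*$, i.e.\ $r_h(v)=\sup_{w\in\dom h^*}\ipt{w}{v}$ for every $v\in\bbX$. Substituting both facts, the asserted equivalence becomes: $\dom h^*$ is open iff for every $u\in\dom h^*$ and every $v\ne 0$ one has $\sup_{w\in\dom h^*}\ipt{w}{v}>\ipt{u}{v}$. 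Thus it suffices to prove, for an arbitrary nonempty convex set $C\subseteq\bbX^*$, that $C$ is open iff for every $u\in C$ and every $v\ne 0$ we have $\sup_{w\in C}\ipt{w}{v}>\ipt{u}{v}$.

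\emph{Step 2 (the ``only if'' direction of the set fact).} If $C$ is open and $u\in C$, then for any $v\ne 0$ there is $t>0$ with $u+tv\in C$ (here I use that in this paper $\bbX$ and $\bbX^*$ share the underlying space $\bbR^n$ with the canonical pairing, so $u+tv$ is meaningful), whence $\sup_{w\in C}\ipt{w}{v}\ge\ipt{u+tv}{v}=\ipt{u}{v}+t\ipt{v}{v}>\ipt{u}{v}$, since $\ipt{v}{v}>0$ as the pairing is the Euclidean inner product.

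\emph{Step 3 (the ``if'' direction, via the contrapositive).} Suppose $C$ is not open; then some $u_0\in C$ fails to lie in $\inter C$, and I claim there is $v\ne 0$ with $\ipt{u_0}{v}\ge\ipt{w}{v}$ for all $w\in C$, which forces $\sup_{w\in C}\ipt{w}{v}=\ipt{u_0}{v}$ and breaks the stated strict inequality at $u=u_0$. To produce such a $v$, split into two cases: if $\inter C=\emptyset$, then $\aff C$ is a proper affine subspace of $\bbX^*$ and any nonzero $v$ orthogonal to $\lin(\aff C)$ makes $w\mapsto\ipt{w}{v}$ constant on $C$; if $\inter C\ne\emptyset$, then $u_0\in\bdry(\cl C)$ and the supporting hyperplane theorem applied to the closed convex set $\cl C$ yields the required $v$. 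The substantive content of the whole argument is Step 3, and within it the only point needing care is the lower-dimensional case $\inter C=\emptyset$, where $C$ is ``supported'' by a hyperplane containing $\aff C$ rather than by a proper supporting hyperplane; this is routine convex geometry, so I do not expect a genuine obstacle. (If one wishes to avoid invoking \cite[Theorem~13.3]{Rock_70}, one can instead verify $r_h=\sup_{w\in\dom h^*}\ipt{w}{\cdot}$ directly from $r_h(v)=\sup_{x\in\dom h}h(x+v)-h(x)$ together with a short conjugacy computation, but this only lengthens the argument without altering its structure.)
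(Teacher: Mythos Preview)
The paper does not supply a proof of this proposition at all --- it is stated with a direct citation to \cite[Corollary~13.3.4(c)]{Rock_70} and used as a black box. Your three-step argument is a correct self-contained derivation: the reduction to the support-function identity $r_h=\sigma_{\dom h^*}$ (Step~1) is exactly the content of \cite[Theorem~13.3]{Rock_70}, and Steps~2--3 cleanly establish the elementary convex-geometric fact that a nonempty convex set $C\subseteq\bbR^n$ is open iff no nonzero linear functional attains its supremum over $C$ at a point of $C$. The only place to be slightly careful is your identification of $\bbX$ and $\bbX^*$ in Step~2 (writing $u+tv$ and invoking $\ipt{v}{v}>0$); this is legitimate here because the paper takes $\bbX=\bbX^*=\bbR^n$ with the canonical bilinear pairing, but it is worth flagging explicitly since the argument would need rephrasing in an abstract duality setting.
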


Based on Proposition~\ref{prop:recession_equiv}, we present our first sufficient condition for $\dom h^*$ to be open.

\begin{lemma} \label{lem:assump_h_dual_attain}
Let $h:\bbX\to\barbbR$ be proper, closed and convex.  If $h$ is strictly convex (on its domain), then $\dom h^*$ is open if and only if $h$ is affine attaining. 
In particular, if $h$ satisfies Assumption~\ref{assum:h}, then $\dom h^*$ is open if and only if $h$ is affine attaining. 
\end{lemma}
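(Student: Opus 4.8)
The ``only if'' direction is exactly Proposition~\ref{prop:affine_attaining} and uses neither strict convexity nor Assumption~\ref{assum:h}, so the task reduces to the ``if'' direction: assuming $h$ is strictly convex on $\dom h$ and affine attaining, show that $\dom h^*$ is open. I would establish this through the recession-function criterion in Proposition~\ref{prop:recession_equiv}. Fix $u\in\bbX^*$ such that $g_u:= h-\ipt{u}{\cdot}$ is lower bounded (equivalently $u\in\dom h^*$); the goal is to prove $r_h(v)>\ipt{u}{v}$ for every $v\ne 0$.

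First I would record the weak inequality $r_h(v)\ge\ipt{u}{v}$ for all $v\in\bbX$. From the definition~\eqref{eq:def_r_h} one checks directly that $r_{g_u}(v)=r_h(v)-\ipt{u}{v}$, and the recession function of a proper closed convex function that is bounded below is nonnegative everywhere (otherwise $g_u(x+nv)\le g_u(x)+n\,r_{g_u}(v)\to-\infty$ along integers $n$, contradicting boundedness from below). Since $h$ is affine attaining and $g_u$ is lower bounded, $g_u$ attains its minimum at some $x_0\in\dom h$. Now suppose, toward a contradiction, that $r_h(v)=\ipt{u}{v}$ for some $v\ne 0$. The definition~\eqref{eq:def_r_h} gives $h(x+v)\le h(x)+r_h(v)$ for all $x\in\dom h$; applying this at $x=x_0$ and then at $x=x_0+v$, and using $r_h(v)=\ipt{u}{v}$, yields
\[
g_u(x_0+v)\le g_u(x_0) \andd g_u(x_0+2v)\le g_u(x_0).
\]
Since $x_0$ is a global minimizer of $g_u$, both inequalities are equalities; in particular $x_0$ and $x_0+2v$ are distinct points of $\dom h$ on which $g_u$ takes the same value as at their midpoint $x_0+v$. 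But $g_u=h-\ipt{u}{\cdot}$ is strictly convex on $\dom h$, so $g_u(x_0+v)<\tfrac{1}{2} g_u(x_0)+\tfrac{1}{2} g_u(x_0+2v)=g_u(x_0)$, a contradiction. Hence $r_h(v)>\ipt{u}{v}$ for all $v\ne 0$, and Proposition~\ref{prop:recession_equiv} gives that $\dom h^*$ is open.

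For the ``in particular'' clause I would reduce to the first part. If $\dom h$ is a singleton then $h^*$ is affine, so $\dom h^*=\bbX^*$ is open and $h$ is trivially affine attaining, so the equivalence holds vacuously. Otherwise, by (the proof of) Lemma~\ref{lem:h^*_diff}, Assumption~\ref{assum:h} forces $h$ to be strictly convex on $\dom h$, and the first part applies verbatim.

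The crux of the argument --- and the reason the general converse of Proposition~\ref{prop:affine_attaining} fails --- is that the contradiction step evaluates $g_u$ at $x_0$ and $x_0+2v$, which may lie on $\bdry\dom h$, so strict convexity is needed on all of $\dom h$ rather than merely on $\inter\dom h$ (cf.\ Remark~\ref{rmk:sc_vsc}); this is exactly where affine attainment is used to produce an actual minimizer $x_0$. The only other point needing a little care is deriving the bounds $r_h(v)\ge\ipt{u}{v}$ and $h(x+v)\le h(x)+r_h(v)$ directly from~\eqref{eq:def_r_h} rather than invoking heavier recession-cone machinery.
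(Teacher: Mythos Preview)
Your proof is correct and follows the same overall strategy as the paper: Proposition~\ref{prop:affine_attaining} for the ``only if'' direction, Proposition~\ref{prop:recession_equiv} for the ``if'' direction, affine attainment to produce a minimizer, and strict convexity to force the strict inequality; the ``in particular'' clause is handled identically. The paper's execution of the key step is more direct, however: once $x^*$ minimizes $g_u$, the optimality condition gives $u\in\partial h(x^*)$, and strict convexity of $h$ immediately yields $h(x^*+v)-h(x^*)>\ipt{u}{v}$ for every $v\ne 0$; since $x^*\in\dom h$, the definition~\eqref{eq:def_r_h} of $r_h$ as a supremum over $\dom h$ then gives $r_h(v)\ge h(x^*+v)-h(x^*)>\ipt{u}{v}$ in one stroke. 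Your contradiction argument via $x_0,\,x_0+v,\,x_0+2v$ reaches the same conclusion but is more roundabout, and the preliminary weak inequality $r_h(v)\ge\ipt{u}{v}$ becomes unnecessary once you use the subgradient inequality at the minimizer directly.
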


\begin{proof}
The ``only if'' direction follows from Proposition~\ref{prop:affine_attaining}, and we only focus on the ``if'' direction. 
Let $u\in\bbX^*$ satisfy that $g_u:=h - \ipt{u}{\cdot}$ is lower bounded. Since $h$ is affine attaining, $g_u$ has a minimizer on $\bbX$, which we denote by $x^*\in \dom h$. By the optimality condition, we have $u\in \partial h(x^*)$. 
Since $h$ is strictly convex, we have 
\begin{equation}
h(x^*+ v) - h(x^*) > \ipt{u}{v}.  \label{eq:h_x^*+v}
\end{equation}
Since $x^*\in \dom h$, using the definition of $r_h$ in~\eqref{eq:def_r_h}, 
we know that for all $v\ne 0$, $r_h(v) > \ipt{u}{v}$. 
Using Proposition~\ref{prop:recession_equiv}, we prove the first part.
Now, suppose that $h$ satisfies Assumption~\ref{assum:h}. If $\dom h$ is singleton, then $h^*$ is linear and $\dom h^*=\bbX^*$, which is clearly open; otherwise $h$ is strictly convex on $\dom h$ (cf.\ Lemma~\ref{lem:h^*_diff}), and using the first part, we complete the proof. 
\end{proof}

Our next sufficient condition is based on the notion of Legendre functions (cf.\ Section~\ref{sec:suff_assump_h}). 

\begin{lemma} \label{lem:legendre_dual_attain}
If $h:\bbX\to\barbbR$  is Legendre,  then $\dom h^*$ is open if and only if $h$  is affine attaining. 
\end{lemma}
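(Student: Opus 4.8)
The plan is to prove Lemma~\ref{lem:legendre_dual_attain} by reducing it to the characterization of open $\dom h^*$ via the recession function (Proposition~\ref{prop:recession_equiv}), exactly as in the proof of Lemma~\ref{lem:assump_h_dual_attain}, but now exploiting the structure of Legendre functions in place of strict convexity on the whole domain. The ``only if'' direction is immediate from Proposition~\ref{prop:affine_attaining}, which requires no convexity structure beyond properness, closedness and convexity, so I would dispatch it in one line. For the ``if'' direction, I would take $u\in\bbX^*$ with $g_u:=h-\ipt{u}{\cdot}$ lower bounded, use affine attainment to get a minimizer $x^*\in\dom h$ of $g_u$, so $u\in\partial h(x^*)$. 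The goal is to verify the condition in Proposition~\ref{prop:recession_equiv}, namely $r_h(v)>\ipt{u}{v}$ for all $v\neq 0$, which by~\eqref{eq:def_r_h} follows once I show $h(x^*+v)-h(x^*)>\ipt{u}{v}$ for all $v\neq 0$.

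The key new ingredient is that for a Legendre function, $u\in\partial h(x^*)$ with $u\in\dom h^*$ forces $x^*\in\inter\dom h$. Indeed, since $h$ is Legendre (essentially smooth and essentially strictly convex), its subdifferential is empty on $\bdry\dom h$ — this is part of the essential smoothness characterization in Rockafellar~\cite[Theorem~26.1]{Rock_70}: $\partial h(x)=\emptyset$ for $x\in\bdry\dom h$. Hence $x^*\in\inter\dom h$ and $u=\nabla h(x^*)$. Now I would use essential strict convexity: $h$ is strictly convex on $\inter\dom h$, so for any $v$ with $x^*+v\in\inter\dom h$ and $v\neq 0$ I get the strict inequality $h(x^*+v)-h(x^*)>\ipt{\nabla h(x^*)}{v}=\ipt{u}{v}$. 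For directions $v$ where $x^*+v\notin\inter\dom h$, I still obtain the (non-strict) gradient inequality $h(x^*+v)-h(x^*)\ge\ipt{u}{v}$ from convexity and $u\in\partial h(x^*)$, but to upgrade to strict I argue as follows: pick any point $\bar x\in\inter\dom h$ distinct from $x^*$ lying (after possibly scaling) on the segment toward $x^*+v$; more cleanly, take $\lambda\in(0,1)$ so that $x_\lambda:=x^*+\lambda v=(1-\lambda)x^*+\lambda(x^*+v)\in\inter\dom h$ (such $\lambda$ exists since $x^*\in\inter\dom h$), apply the strict inequality at $x_\lambda$, and combine with the non-strict gradient inequality at $x^*+v$ via convexity of $t\mapsto h(x^*+tv)$ to conclude the supremum in~\eqref{eq:def_r_h} is still attained/bounded below by a strict inequality. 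Alternatively, and more directly, I would just invoke~\eqref{eq:def_r_h}: $r_h(v)=\sup_{x\in\dom h}h(x+v)-h(x)\ge$ the value along the segment through $x^*$, and strict convexity on $\inter\dom h$ gives $r_h(v)>\ipt{u}{v}$ because the difference quotient $t\mapsto(h(x^*+tv)-h(x^*))/t$ is strictly increasing on an interval of positive length inside $\inter\dom h$ and its limit as $t\to 0^+$ is $\ipt{\nabla h(x^*)}{v}=\ipt{u}{v}$.

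The main obstacle is the handling of directions $v$ that leave $\inter\dom h$: essential strict convexity only gives strict convexity on the interior, not on the boundary (Remark~\ref{rmk:sc_vsc} explicitly warns about this), so the clean one-line argument of Lemma~\ref{lem:assump_h_dual_attain} does not transfer verbatim. The fix is the monotonicity-of-difference-quotient argument sketched above, which localizes everything to a segment starting at the interior point $x^*$; since $x^*\in\inter\dom h$, a nondegenerate initial portion of that segment stays in $\inter\dom h$ regardless of where $v$ points, and that suffices to make the relevant difference quotient strictly larger than its limiting value $\ipt{u}{v}$. Once $r_h(v)>\ipt{u}{v}$ is established for all $v\neq 0$, Proposition~\ref{prop:recession_equiv} closes the ``if'' direction and the lemma is proved. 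I would also note in passing that this gives a cleaner route than Lemma~\ref{lem:assump_h_dual_attain} did, because for Legendre $h$ we never need global strict convexity — only interior strict convexity plus the emptiness of $\partial h$ on the boundary.
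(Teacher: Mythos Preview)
Your argument is correct, but it takes a genuinely different and more laborious route than the paper's. You mimic the proof of Lemma~\ref{lem:assump_h_dual_attain}, passing through Proposition~\ref{prop:recession_equiv} and verifying the strict recession-function inequality $r_h(v)>\ipt{u}{v}$ by exploiting essential strict convexity on $\inter\dom h$; the extra care you describe for directions $v$ with $x^*+v\notin\inter\dom h$ (localizing to a small segment inside the interior and using monotonicity of the difference quotient) is indeed necessary and your fix is valid. The paper, by contrast, bypasses Proposition~\ref{prop:recession_equiv} entirely: once affine attainment yields a minimizer $x^*$ and essential smoothness forces $x^*\in\inter\dom h$ with $u=\nabla h(x^*)$, the paper simply invokes Lemma~\ref{lem:legendre} (the homeomorphism $\nabla h:\inter\dom h\to\inter\dom h^*$) to conclude $u\in\inter\dom h^*$, so $\dom h^*\subseteq\inter\dom h^*$ and hence $\dom h^*$ is open. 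Your approach has the minor advantage of staying parallel to the strict-convexity proof and not invoking the Legendre duality theorem, but the paper's approach is considerably shorter and avoids the boundary case analysis altogether.
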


\begin{proof}
The ``only if'' direction follows from Proposition~\ref{prop:affine_attaining}, and we only focus on the ``if'' direction. 
For any $u\in\dom h^*$, 
since $g_u:=h - \ipt{u}{\cdot}$ is lower bounded and $h$ is affine attaining, $g_u$ has a minimizer on $\bbX$, which we denote by $x^*\in \dom h$. Since $h$  is Legendre, we actually have $x^*\in \inter\dom h$ and $u = \nabla h(x^*)$ (cf.~\cite[Theorem~26.1]{Rock_70}). By Lemma~\ref{lem:legendre}, we know that $u\in \inter\dom h^*$. This shows that $\dom h^*\subseteq \inter\dom h^*$, and we complete the proof. 
\end{proof}

{\bf Illustrating Examples.} Let us illustrate our results above using the examples in Example~\ref{eg:sep_h}, all of which are Legendre and satisfy Assumption~\ref{assum:h}. However, not all examples are affine attaining. 

\begin{itemize}[leftmargin=3ex,topsep=0pt]
\item $h_1(x):= \sum_{i=1}^n -\ln x_i$ (for $x>0$) is affine attaining. Indeed,  $h_1 - \ipt{u}{\cdot}$ is lower bounded if and only if $u<0$, in which case it has the unique minimizer  $x^* = [-1/u_i]_{i=1}^m$. By Lemma~\ref{lem:legendre_dual_attain}, we know that $\dom h_1^*$ is open, which is corroborated by the fact that 
$h_1^*(u) = \sum_{i=1}^n -\ln(-u_i) -1$. 
\item $h_2(x):= \sum_{i=1}^n x_i \ln x_i - x_i$ ($x\ge 0$) is affine attaining. Indeed,  $h_2 - \ipt{u}{\cdot}$ is lower bounded for all  $u\in\bbR^n$, in which case it has the unique minimizer  $x^* = [\exp(u_i)]_{i=1}^m$. By Lemma~\ref{lem:legendre_dual_attain}, we know that $\dom h_2^*$ is open, which is corroborated by the fact that 
$h_2^*(u) = \sum_{i=1}^n \exp(u_i)$. 
\item $h_3(x):= \sum_{i=1}^n \exp(x_i)$ ($x\in\bbR^n$) is not affine attaining, since  $h_3  = h_3- \ipt{0}{\cdot}$ is lower bounded but has no minimizer on $\bbX$. By Proposition~\ref{prop:affine_attaining}, we know that $\dom h^*_3$ is not open, which can also be seen from the facts that $h^*_3 = h_2$ and $\dom h_2 = \bbR_+^n$.  %
\end{itemize}

Let us conclude this section by the following result. 

\begin{lemma} \label{lem:sum_dom_open}
Let $h_1,h_2:\bbX\to\barbbR$ be proper, closed and convex functions such that $\ri \dom h_1\cap \ri \dom h_2\ne \emptyset$, and 
let $h:= h_1+h_2$. If $\dom h_1^*$ is open, then $\dom h^*$  is open. 
\end{lemma}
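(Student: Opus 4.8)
The plan is to reduce everything to the conjugate-of-a-sum formula and then to an elementary observation about Minkowski sums. First I would record the basic well-posedness: since $h_1$ and $h_2$ are proper, closed and convex with $\ri\dom h_1\cap\ri\dom h_2\neq\emptyset$, the sum $h=h_1+h_2$ is proper (its domain $\dom h_1\cap\dom h_2$ contains the nonempty set $\ri\dom h_1\cap\ri\dom h_2$), closed and convex; consequently $h^*$ is proper, closed and convex and $\dom h^*\neq\emptyset$. Under this same constraint qualification, the standard Fenchel-duality result for sums (e.g.\ \cite[Theorem~16.4]{Rock_70}, which is exactly the fact quoted in Remark~\ref{rmk:verify}) gives $h^* = h_1^*\,\square\,h_2^*$, the infimal convolution of $h_1^*$ and $h_2^*$.

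Next I would identify $\dom h^*$ explicitly. For any two functions $g_1,g_2:\bbX^*\to\barbbR$ one has $\dom(g_1\,\square\,g_2)=\dom g_1+\dom g_2$: if $u=u_1+u_2$ with $u_i\in\dom g_i$, then $(g_1\,\square\,g_2)(u)\le g_1(u_1)+g_2(u_2)<+\infty$; conversely, if $u\notin\dom g_1+\dom g_2$, then for every decomposition $u=u_1+u_2$ at least one of $g_1(u_1)$, $g_2(u_2)$ equals $+\infty$, so $(g_1\,\square\,g_2)(u)=+\infty$. Applying this with $g_i=h_i^*$ yields $\dom h^*=\dom h_1^*+\dom h_2^*$. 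Note that no attainment/exactness of the infimal convolution is needed for this identity.

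Finally, the openness of $\dom h^*$ follows by writing the Minkowski sum as a union of translates: $\dom h_1^*+\dom h_2^*=\bigcup_{v\in\dom h_2^*}\bigl(v+\dom h_1^*\bigr)$. By hypothesis $\dom h_1^*$ is open, so each translate $v+\dom h_1^*$ is open, and an arbitrary union of open sets is open; hence $\dom h^*$ is open. (The edge case $\dom h_2^*=\emptyset$ cannot occur here since $h_2$ is proper, closed and convex, but even then the sum would be empty, which is still open.)

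The only genuine subtlety is invoking the correct conjugate-of-a-sum theorem and checking that $\ri\dom h_1\cap\ri\dom h_2\neq\emptyset$ is precisely the hypothesis that licenses $h^*=h_1^*\,\square\,h_2^*$; once that is in hand, everything else is routine point-set topology on Minkowski sums.
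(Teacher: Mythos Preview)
Your proposal is correct and follows essentially the same approach as the paper: both invoke \cite[Theorem~16.4]{Rock_70} under the constraint qualification $\ri\dom h_1\cap\ri\dom h_2\neq\emptyset$ to obtain $h^*=h_1^*\,\square\,h_2^*$, then use $\dom(h_1^*\,\square\,h_2^*)=\dom h_1^*+\dom h_2^*$, and conclude that an open set plus any set is open. The paper simply cites \cite[pp.~34]{Rock_70} for the domain identity and asserts the final openness without spelling out the union-of-translates argument, whereas you supply both details explicitly.
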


\begin{proof}
Since $\ri \dom h_1\cap \ri \dom h_2\ne \emptyset$, by~\cite[Theorem~16.4]{Rock_70}, we have $h^* = (h_1+h_2)^*= h_1^*\,\square\,h_2^*$, 
and hence from~\cite[pp.~34]{Rock_70}, we know that 
\begin{equation}
\dom h^* =   \dom (h_1^*\,\square\,h_2^*) =\dom h_1^*+\dom h_2^*. 
\end{equation}
Since $\dom h_1^*$ is open, $\dom h^*$ is open. 
\end{proof}

As a corollary, let $h_1$ be given in Lemma~\ref{lem:sum_dom_open}, 
and  $\calC$ be a nonempty, closed and convex set such that $\ri \dom h_1\cap \ri \calC\ne \emptyset$. If $\dom h_1^*$ is open, then $\dom (h_1+\iota_\calC)^*$  is open. 

\section{Affine Invariance of Algorithm~\ref{algo:DA} and Its Convergence Rate   Analysis } \label{sec:aff_inv} %

In this section, we discuss the affine invariance of Algorithm~\ref{algo:DA}  and its convergence rate  analysis in Theorem~\ref{thm:pdgap}. We start by formally introducing the notion of affine invariance. Then we show that Assumptions~\ref{assum:h} and~\ref{assum:Q} are still satisfied under the affinely re-parameterized problem, and Algorithm~\ref{algo:DA} is affine invariant. Finally, we show that 
if $\calU$ is solid and $\normt{\cdot}_{\bbX}$ is induced by $\calU$ in a certain way, the convergence rate  analysis of Algorithm~\ref{algo:DA} in Theorem~\ref{thm:pdgap} is also affine invariant. As a remark, 
although the discussions in this section focus on Algorithm~\ref{algo:DA}, the same reasoning can also be used to analyze the affine invariance of Algorithm~\ref{algo:MDA} and its convergence rate   analyses in Theorems~\ref{thm:analysis_algo2} and~\ref{thm:conv_rate_algo2}. 

\subsection{\bf Introduction to Affine Invariance} \label{sec:def_aff}

Given an optimization problem 
\begin{equation}
{\min}_{u\in\bbU}\, F(u), \label{eq:affine_orig}
\end{equation}
where $F$ is a proper and closed function,  let us define $\barcalA:= \aff (\dom F)$ and $\barcalL:= \lin \barcalA$. 
Consider the following {\em affine re-parameterization} of~\eqref{eq:affine_orig}: 
\begin{equation}
{\min}_{w\in\bbW}\, F(\rvM w + b). \label{eq:affine_reparam}
\end{equation}
Here $\rvM: \bbW\to \calL$ is a linear operator, where $\bbW$ and $\calL$ are (finite-dimensional) vector spaces such that $\barcalL\subseteq \calL\subseteq \bbU$, and $b\in\dom F$. 
An optimization algorithm $\scA$ is called {\em affine-invariant}, if the sequences of iterates $\{u^k\}_{k\ge 0}$ and $\{w^k\}_{k\ge 0}$ produced by $\scA$ when applied to~\eqref{eq:affine_orig} and~\eqref{eq:affine_reparam}, respectively, are related through the affine transformation $w\mapsto \rvM w + b$. Precisely, 
if $u^0 = \rvM w^0 + b$ (where $x_0$ and $w_0$ are the  starting points in $\scA$), then $u^k = \rvM w^k + b$ for all $k\ge 1$. In addition, if $\scA$ is {affine-invariant}, then a convergence rate analysis of $\scA$ is  {\em affine-invariant} if all the quantities appearing in the convergence rate remain unchanged after the affine re-parameterization  in~\eqref{eq:affine_reparam}. 

\subsection{\bf Affine Invariance of DA for Solving~\ref{eq:P}}

Following 
Section~\ref{sec:def_aff}, in the problem~\ref{eq:P},  define $\barcalA = \aff \dom h$ and $\barcalL:= \lin \barcalA$. Using the affine transformation $w\mapsto\rvM w + b$ described above, 
 where  $\rvM: \bbW\to \calL$ is a linear operator,   $\calL$ is some linear subspace such that $\barcalL\subseteq \calL\subseteq \bbX$ and $b\in\dom h$,  
 the affine re-parameterization of~\ref{eq:P} reads
\begin{align}
\begin{split}
&{\min}_{w\in\bbW}\; \tilf(\tilde\rvA w) + \tilh(w),  \label{eq:affine_reparam_P}\\
\where \tilde\rvA := \rvA \rvM, \quad &\tilf(z) := f(z+\rvA b), \quad\andd  \quad \tilh(w):= h(\rvM w + b). 
\end{split}\tag*{$(\rmP_w)$}
\end{align}
To 
state the affine invariance property of the DA method in Algorithm~\ref{algo:DA}, we restrict the class of linear operators $\rvM$ to the class of {\em linear bijections} from $\bbW$ to $\bbX$ (in particular, $\calL=\bbX$ and $\bbW$ has the same dimension as $\bbX$). 
The purpose of such a restriction is to ensure that if $h$ and $f$ in~\ref{eq:P} satisfy Assumptions~\ref{assum:h} and~\ref{assum:Q},  then $\tilh$ and $\tilf$ in~\ref{eq:affine_reparam_P} also satisfy these two assumptions. 

\begin{lemma}\label{lem:well_defined_w}
In~\ref{eq:affine_reparam_P},  let $\rvM:\bbW\to \bbX$ be a linear bijection and $b\in\dom h$. If $h$ and $f$ in~\ref{eq:P} satisfy Assumptions~\ref{assum:h} and~\ref{assum:Q}, so do $\tilh$ and $\tilf$ in~\ref{eq:affine_reparam_P}, 
  and hence Algorithm~\ref{algo:DA} is well-defined 
  on~\ref{eq:affine_reparam_P}. 
  \end{lemma}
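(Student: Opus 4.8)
The plan is to verify Assumptions~\ref{assum:h} and~\ref{assum:Q} for $\tilh$ and $\tilf$ directly from the definitions, exploiting that $\rvM$ is a linear bijection so that all the relevant convex-analytic objects transform covariantly. First I would record the conjugate calculus under the affine change of variables: since $\tilf(z) = f(z+\rvA b)$, we have $\tilf^*(y) = f^*(y) - \ipt{y}{\rvA b}$, so $\dom \tilf^* = \dom f^*$ and consequently $\cl\dom\tilf^* = \calQ$; since $\tilh(w) = h(\rvM w + b)$ with $\rvM$ a bijection, we have $\tilh^*(u) = h^*(\rvM^{-*}u) - \ipt{\rvM^{-*}u}{b}$ (where $\rvM^{-*} := (\rvM^*)^{-1} = (\rvM^{-1})^*$), so $\dom\tilh^* = \rvM^*(\dom h^*)$. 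Because $\rvM^*$ is itself a linear bijection, it is a homeomorphism, hence $\inter\dom\tilh^* = \rvM^*(\inter\dom h^*)$ and $\dom\tilh^*$ is nonempty with nonempty interior. This bookkeeping is the technical core; once it is in place the two assumptions follow quickly.

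Next I would check Assumption~\ref{assum:h} for $\tilh$. Given a nonempty convex compact $\calS\subseteq\dom\tilh = \rvM^{-1}(\dom h - b)$, its image $\calS' := \rvM(\calS) + b$ is a nonempty convex compact subset of $\dom h$, so by Assumption~\ref{assum:h} for $h$ there is $\mu_{\calS'}>0$ with $h$ being $\mu_{\calS'}$-strongly convex on $\calS'$. Substituting $x = \rvM w + b$, $x' = \rvM w' + b$ into the strong-convexity inequality and using $\normt{x-x'}_\bbX = \normt{\rvM(w-w')}_\bbX$ together with the two-sided bound $c\normt{w-w'}_{\bbW}\le \normt{\rvM(w-w')}_\bbX \le C\normt{w-w'}_{\bbW}$ (valid for the invertible $\rvM$ on finite-dimensional spaces, with $c>0$), one obtains that $\tilh$ is $(\mu_{\calS'}c^2)$-strongly convex on $\calS$ w.r.t.\ $\normt{\cdot}_{\bbW}$, verifying Assumption~\ref{assum:h}. (Remark~\ref{rmk:choice_norm} already notes that only positivity of the modulus matters here.)

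Then I would check Assumption~\ref{assum:Q} for the pair $(\tilh,\tilf)$. The operator $\tilde\rvA = \rvA\rvM$ has adjoint $\tilde\rvA^* = \rvM^*\rvA^*$, so the set $\tilcalU := -\tilde\rvA^*(\cl\dom\tilf^*) = -\rvM^*\rvA^*(\calQ) = \rvM^*(\calU)$. By Assumption~\ref{assum:Q} for $(h,f)$ we have $\calU\subseteq\inter\dom h^*$, and applying the bijection $\rvM^*$ to both sides gives $\tilcalU = \rvM^*(\calU)\subseteq\rvM^*(\inter\dom h^*) = \inter\dom\tilh^*$, which is exactly Assumption~\ref{assum:Q} for $(\tilh,\tilf)$. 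Finally, having verified that $\tilh,\tilf$ satisfy Assumptions~\ref{assum:h} and~\ref{assum:Q}, I invoke Lemma~\ref{lem:well_posed} (equivalently Remark~\ref{rmk:well_defined}) applied to problem~\ref{eq:affine_reparam_P} to conclude that Algorithm~\ref{algo:DA} is well-defined on~\ref{eq:affine_reparam_P}. I expect the main obstacle to be purely notational rather than conceptual: keeping the adjoints, inverses, and the $\rvM^{-*}$-type compositions straight in the conjugate formulas, and being careful that $\rvM^*$ (not $\rvM$) is the map that transports $\dom h^*$ — everything else is a routine transfer of the hypotheses through a homeomorphism.
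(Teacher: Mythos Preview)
Your proposal is correct and follows essentially the same approach as the paper: compute $\tilf^*$ and $\tilh^*$ via the obvious change of variables to identify $\dom\tilf^*=\dom f^*$ and $\dom\tilh^*=\rvM^*(\dom h^*)$, use that $\rvM^*$ is a homeomorphism to transport Assumption~\ref{assum:Q}, and push the strong-convexity inequality through the affine map to verify Assumption~\ref{assum:h}. The only cosmetic difference is that the paper picks the specific norm $\normt{w}_\bbW:=\normt{\rvM w}_\bbX$ to get an exact equality of moduli (useful later in Section~\ref{sec:aff_inv_analysis}), whereas you invoke norm equivalence to get $\mu_{\calS'}c^2>0$ directly --- both routes rely on Remark~\ref{rmk:choice_norm} and are interchangeable for the purposes of this lemma.
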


  \begin{proof}
  Note that since $\rvM:\bbW\to \bbX$ is  a linear bijection, we have 
\begin{align}
\tilf^*(y) &= {\sup}_{z\in\bbY^*}\, \ipt{y}{z} - \tilf(z)\nn\\
& = {\sup}_{z\in\bbY^*}\, \ipt{y}{z} - f(z+\rvA b) = {\sup}_{z'\in\bbY^*}\, \ipt{y}{z'} - f(z')  - \ipt{y}{\rvA b} = f^*(y) - \ipt{y}{\rvA b}, \label{eq:def_tilf*} \\
\tilh^*(v) &= {\sup}_{w\in\bbW}\, \ipt{v}{w} - \tilh(w)\nn\\
& = {\sup}_{w\in\bbW}\, \ipt{v}{w} - h(\rvM w + b)\nn\\
 &\hspace{0ex} = {\sup}_{x\in\bbX}\, \ipt{v}{\rvM^{-1}(x-b)} - h(x) = h^*\big(\big(\rvM^{-1}\big)^* v\big) - \ipt{v}{\rvM^{-1}b}. \label{eq:def_tilh*}
\end{align}
 As a result, we have 
\begin{equation}
\dom \tilf^* = \dom f^* \quad\andd \quad \dom \tilh^* = ((\rvM^{-1})^*)^{-1} \dom h^* = \rvM^* \dom h^*. 
\end{equation}
Denote the counterparts of $\calQ$ and  $\calU$ in~\ref{eq:affine_reparam_P} by $\tilcalQ$ and $\tilcalU$, respectively, i.e., 
 \begin{align}
 \tilcalQ:= \cl \dom \tilf^*  = \cl \dom f^*  = \calQ \quad\andd \quad  \tilcalU := -\tilde\rvA^*(\tilcalQ) = -\rvM^*\rvA^*(\calQ)=  \rvM^*(\calU).  \label{eq:def_tilQ}
 \end{align}
 Since  $\rvM^*:\bbX^*\to \bbW^*$ is a linear bijection, we have $\inter \dom \tilh^*= \inter(\rvM^* \dom h^*) = \rvM^* (\inter\dom h^*)$. If $h$ and $f$ satisfy Assumption~\ref{assum:Q}, we have $\calU \subseteq \inter \dom h^*$ and hence $\tilcalU =  \rvM^*(\calU)\subseteq \rvM^* (\inter\dom h^*) = \inter \dom \tilh^*,$ 
 which verifies Assumption~\ref{assum:Q} for $\tilh$ and $\tilf$. To verify Assumption~\ref{assum:h} for $\tilh$, first note that for any nonempty, convex and compact set $\calS'\subseteq\dom \tilh$, the set $\calS:= \rvM(\calS')+b\subseteq \dom h$ is nonempty, convex and compact, and since $h$ satisfies Assumption~\ref{assum:h}, we have  $\mu_{\calS} > 0$. Now, since $\rvM$ is bijective, $\calS$ is singleton if and only if $\calS'$ is, in which case $\mu_{\calS'} := 1 > 0$. Otherwise, by choosing the norm $\normt{\cdot}_{\bbW}$ such that  $\normt{w}_{\bbW} := \normt{\rvM w}_{\bbX}$ for all $w\in\bbW$,  we have
 \begin{align*}
\begin{split}
\mu_{\calS'} &:= \inf\left\{\frac{\lambda \tilh( w) + (1-\lambda)\tilh(w')-\tilh((1-\lambda)w'+\lambda w)  }{(1/2) \lambda(1-\lambda)\normt{w'-w}_{\bbW}^2 }: \, w', w\in\calS',\, w'\ne w,\, \lambda\in (0,1)\right\} \\
& = \inf\Bigg\{\frac{\lambda h(\rvM w+b) + (1-\lambda)h(\rvM w'+b)-h((1-\lambda)\rvM w'+\lambda \rvM w+b)  }{(1/2) \lambda(1-\lambda)\normt{\rvM w'-\rvM w}_{\bbX}^2 }:\\
 &\hspace{16em} w', w\in\rvM^{-1}(\calS - b),\, w'\ne w,\, \lambda\in (0,1)\Bigg\}
 \end{split}\\
 & = \inf\Bigg\{\frac{\lambda h(x) + (1-\lambda)h(x')-h((1-\lambda)x'+\lambda x)  }{(1/2) \lambda(1-\lambda)\normt{x'-x}_{\bbX}^2 }: x', x\in\calS,\, x'\ne x,\, \lambda\in (0,1)\Bigg\}\\
 & = \mu_{\calS} > 0. 
\end{align*}
Since the positivity of $\mu_{\calS'}$ is independent of the choice of $\normt{\cdot}_\bbW$ (cf.~Remark~\ref{rmk:choice_norm}),  we know that $\tilh$ satisfies Assumption~\ref{assum:h} under any choice of $\normt{\cdot}_\bbW$. 
  \end{proof}
  
  Once we ensure that Algorithm~\ref{algo:DA} is well-defined when applied to~\ref{eq:affine_reparam_P} (cf.~Lemma~\ref{lem:well_defined_w}), using induction, we can easily show that it is affine-invariant, which is formally stated below.  

  
  \begin{theorem}\label{thm:aff_inv_DA}
  Let $\rvM:\bbW\to \bbX$ be a linear bijection and $b\in\dom h$, and Assumptions~\ref{assum:h} and~\ref{assum:Q} hold.  Apply Algorithm~\ref{algo:DA}  to~\ref{eq:P} and~\ref{eq:affine_reparam_P} with pre-starting points $x^{-1}\in\bbX$ and $w^{-1}\in\bbW$, respectively, and 
  denote the iterates generated by Algorithm~\ref{algo:DA} on~\ref{eq:P} and~\ref{eq:affine_reparam_P} by $\{x^k\}_{k\ge 0}$ and $\{w^k\}_{k\ge 0}$, respectively. In addition, for all $k\ge -1$, let $g^k$ 
  be chosen in the same way in Algorithm~\ref{algo:DA} when applied to~\ref{eq:P} and~\ref{eq:affine_reparam_P}. 
  Then $x^k = \rvM w^k + b$ for all $k\ge 0$ provided that $x^{-1} = \rvM w^{-1} + b$.
  
  \end{theorem}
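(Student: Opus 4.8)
The plan is to proceed by induction on $k$, using the fact (established in Lemma~\ref{lem:well_defined_w}) that Algorithm~\ref{algo:DA} is well-defined on both~\ref{eq:P} and~\ref{eq:affine_reparam_P}, so every argmin appearing below exists. The key structural observation is that the subgradient computation step is the only ``oracle'' step, and the hypothesis that $g^k$ is chosen identically on the two problems (at the matching arguments $\rvA x^k = \tilde\rvA w^k$, once $x^k = \rvM w^k + b$) removes the only source of non-uniqueness. Concretely, I would first record the identities $\tilde\rvA = \rvA\rvM$ and $\tilf(\,\cdot\,) = f(\,\cdot + \rvA b)$, so $\partial\tilf(z) = \partial f(z + \rvA b)$; hence if $x^k = \rvM w^k + b$ then $\rvA x^k = \rvA\rvM w^k + \rvA b = \tilde\rvA w^k + \rvA b$, and a subgradient of $\tilf$ at $\tilde\rvA w^k$ is exactly a subgradient of $f$ at $\rvA x^k$. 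So we may take $g^k$ to be literally the same vector in $\bbY^*$ in both runs; this is what the hypothesis ``$g^k$ chosen in the same way'' encodes.

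Next I would handle the pre-start. We are given $x^{-1} = \rvM w^{-1} + b$, so by the argument above $g^{-1} \in \partial f(\rvA x^{-1})$ is simultaneously a valid choice in $\partial\tilf(\tilde\rvA w^{-1})$, and we use the same $g^{-1}$. Then $w^0 = \argmin_{w\in\bbW}\, \ipt{g^{-1}}{\tilde\rvA w} + \tilh(w) = \argmin_{w}\, \ipt{g^{-1}}{\rvA\rvM w} + h(\rvM w + b)$. Substituting $x = \rvM w + b$ (a bijection $\bbW \to \bbX$ since $\rvM$ is a linear bijection) and noting $\ipt{g^{-1}}{\rvA\rvM w} = \ipt{g^{-1}}{\rvA x} - \ipt{g^{-1}}{\rvA b}$, the objective differs from $\ipt{g^{-1}}{\rvA x} + h(x)$ by the constant $-\ipt{g^{-1}}{\rvA b}$; since the minimizer is unique (Lemma~\ref{lem:well_posed}, $x^0 = \nabla h^*(-\rvA^* g^{-1}) \in \barcalS$), we get $\rvM w^0 + b = x^0$. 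Also $s^0 = 0$ in both runs, so $\bars^0 = g^{-1}$ agrees.

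For the inductive step, assume $x^k = \rvM w^k + b$ and that the running dual sums agree: $s^k$ is the same vector in $\bbY^*$ in both runs (this follows because each $g^i$ is the same and the $\alpha_i$ are problem-independent). Then $\rvA x^k = \tilde\rvA w^k + \rvA b$ gives $g^k$ the same in both runs, hence $s^{k+1} = s^k + \alpha_k g^k$ agrees, and $\beta_{k+1}$ is problem-independent. Now $w^{k+1} = \argmin_w\, \ipt{s^{k+1}}{\tilde\rvA w} + \beta_{k+1}\tilh(w)$; the same substitution $x = \rvM w + b$ turns this into $\argmin_x\, \ipt{s^{k+1}}{\rvA x} + \beta_{k+1} h(x)$ up to an additive constant $-\ipt{s^{k+1}}{\rvA b}$, whose unique minimizer (again by well-definedness) is $x^{k+1}$. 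Therefore $\rvM w^{k+1} + b = x^{k+1}$, closing the induction.

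I do not expect a genuine obstacle here; the proof is essentially bookkeeping. The one point that deserves care is making sure the two argmin sub-problems have the \emph{same} minimizer under the change of variables $x = \rvM w + b$ — this requires that $\rvM$ be a bijection (so the correspondence $w \leftrightarrow x$ is one-to-one) and that the minimizer be unique, which is precisely guaranteed by Lemma~\ref{lem:well_defined_w} together with Lemma~\ref{lem:well_posed} applied to~\ref{eq:affine_reparam_P}. A secondary subtlety is the phrase ``let $g^k$ be chosen in the same way'': one should state explicitly that, having shown $\rvA x^k = \tilde\rvA w^k + \rvA b$ and $\partial\tilf(\tilde\rvA w^k) = \partial f(\rvA x^k)$, the natural reading is that the identical element of this common subdifferential is selected in both runs, which is consistent with any deterministic oracle tie-breaking rule. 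Beyond these remarks, the argument is a direct two-line induction and I would present it compactly.
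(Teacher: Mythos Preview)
Your proposal is correct and takes essentially the same approach as the paper, which simply states that once well-definedness on~\ref{eq:affine_reparam_P} is established (Lemma~\ref{lem:well_defined_w}), the result follows by induction. You have carefully filled in the induction details---the subdifferential identity $\partial\tilf(\tilde\rvA w^k)=\partial f(\rvA x^k)$ when $x^k=\rvM w^k+b$, and the change of variables $x=\rvM w+b$ in the argmin sub-problems---exactly as intended.
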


%
%

\subsection{\bf Affine Invariance of the Convergence Rate Analysis of DA} \label{sec:aff_inv_analysis}

 As introduced in Section~\ref{sec:def_aff}, to analyze the affine invariance of the convergence rate  analysis of Algorithm~\ref{algo:DA}  in Theorem~\ref{thm:pdgap}, we only to focus on $\diam_{\normt{\cdot}_*}(\calU)$ and $\mu_{\barcalS}$ that appear in the convergence rate in~\eqref{eq:comp_guarantee_DA}. 
 Since the definitions of $\diam_{\normt{\cdot}_*}(\calU)$ and $\mu_{\barcalS}$ (cf.~\eqref{eq:def_diamU} and~\eqref{eq:def_muS}) involve the pair of norms $\normt{\cdot}_{\bbX}$ and $\normt{\cdot}_{\bbX,*}$, to make 
both quantities  affine invariant,  we need to choose a suitable norm $\normt{\cdot}_{\bbX,*}$ (or equivalently, $\normt{\cdot}_{\bbX}$) so that it ``adapts to'' the the affine re-parameterization. To that end, assume that $\calU$ is solid (i.e., $\inter\calU\ne \emptyset$). Since $\calU\ne\emptyset$ is convex and compact, we know  that $\calU - \calU$  is solid, compact, convex and symmetric around the origin. As a result, the gauge function of $\calU - \calU$  (cf.~\cite[pp.~28]{Rock_70}), namely
\begin{equation}
\gamma_{\calU - \calU} (u):= \inf\{\lambda>0: u/\lambda \in \calU - \calU\},  \label{eq:U_gauge}
\end{equation} 
is indeed a {\em norm} on $\bbX^*$. For convenience, define $\normt{\cdot}_\calU := \gamma_{\calU - \calU}$, 
and for the affine invariance analysis of $\diam_{\normt{\cdot}_*}(\calU)$ and $\mu_{\barcalS}$ in this subsection, we shall choose $\normt{\cdot}_{\bbX,*}  :=  \normt{\cdot}_\calU$. As a result, we have 
\begin{equation}
\normt{x}_\bbX = \normt{x}_{\calU,*}:= {\max}_{\normt{u}_\calU\le 1}\;  \ipt{u}{x} = {\max}_{{u}\in \calU-\calU}\;  \ipt{u}{x}, \quad \forall\,x\in\bbX.   \label{eq:norm_X} 
\end{equation}
Note that both $\normt{\cdot}_{\calU}$  and $\normt{\cdot}_{\calU,*}$ are induced by $\calU$, which only depends on $f$ and $\rvA$ and hence is {\em intrinsic} to the problem in~\ref{eq:P}. 
Consequently, $\diam_{\normt{\cdot}_*}(\calU)$ now becomes $\diam_{\normt{\cdot}_{\calU}}(\calU)$. As shown in the lemma below, 
we always have $\diam_{\normt{\cdot}_{\calU}}(\calU)=1$.  

\begin{lemma} \label{lem:diamU}
If $\inter\calU\ne \emptyset$, then $\diam_{\normt{\cdot}_{\calU}}(\calU):= \max_{u,u'\in\calU}\, \normt{u-u'}_\calU = 1$. 
\end{lemma}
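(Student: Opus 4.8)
The plan is to unwind the definitions and exploit the translation-invariance of $\calU - \calU$. Recall that for the purposes of this subsection we have set $\normt{\cdot}_{\bbX,*} := \normt{\cdot}_\calU = \gamma_{\calU-\calU}$, where $\gamma_{\calU-\calU}(u) = \inf\{\lambda > 0 : u/\lambda \in \calU - \calU\}$ is the gauge function of the solid, compact, convex, origin-symmetric set $\calU - \calU$. The key elementary fact I would invoke is the standard correspondence between a norm and its unit ball: since $\calU - \calU$ is closed (being compact), convex, symmetric, and solid, it is exactly the closed unit ball of the norm $\normt{\cdot}_\calU$, i.e.\ $\normt{u}_\calU \le 1 \iff u \in \calU - \calU$.

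First I would show $\diam_{\normt{\cdot}_\calU}(\calU) \le 1$. Take any $u, u' \in \calU$. Then $u - u' \in \calU - \calU$ by definition, so by the correspondence above $\normt{u - u'}_\calU \le 1$. Taking the maximum over $u, u' \in \calU$ gives $\diam_{\normt{\cdot}_\calU}(\calU) \le 1$.

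Next I would show the reverse inequality $\diam_{\normt{\cdot}_\calU}(\calU) \ge 1$. The point here is that $\calU$ is a nondegenerate (solid) convex body, so $\calU - \calU$ genuinely "fills out" its unit ball rather than sitting strictly inside it. Concretely, pick any nonzero $v$ on the boundary of $\calU - \calU$; then $\normt{v}_\calU = 1$ (the gauge of a boundary point of a closed solid convex body is exactly $1$). Since $v \in \calU - \calU$, we may write $v = u - u'$ with $u, u' \in \calU$, and hence $\diam_{\normt{\cdot}_\calU}(\calU) \ge \normt{u - u'}_\calU = \normt{v}_\calU = 1$. (The existence of such a boundary point uses exactly that $\inter\calU \ne \emptyset$, so $\calU-\calU$ has nonempty interior and its boundary is nonempty; and it uses compactness so that $\calU-\calU$ is closed and the supremum defining the diameter is attained.) Combining the two inequalities yields $\diam_{\normt{\cdot}_\calU}(\calU) = 1$.

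The main obstacle — really the only subtlety — is the lower bound, specifically justifying that a boundary point $v$ of $\calU - \calU$ has $\gamma_{\calU-\calU}(v) = 1$ and lies in $\calU - \calU$ itself. This follows from the closedness of $\calU - \calU$ (so the gauge is lower semicontinuous and $\calU - \calU = \{u : \gamma_{\calU-\calU}(u) \le 1\}$) together with the solidity (so $\calU - \calU$ is not contained in a proper subspace and genuinely has a boundary at gauge-value $1$). An alternative, slightly more hands-on route for the lower bound: since $\calU-\calU$ is solid there is a ball $B(0,\rho) \subseteq \calU - \calU$ for some $\rho > 0$; rescaling, one shows that the supremum of $\gamma_{\calU-\calU}$-values attained on $\calU-\calU$ is exactly $1$ by a straightforward continuity-and-compactness argument, and this supremum is $\diam_{\normt{\cdot}_\calU}(\calU)$ by the first paragraph. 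Either way the argument is short; everything else is routine bookkeeping with gauges.
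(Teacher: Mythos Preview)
Your argument is correct. Both you and the paper reduce the claim to
\[
\diam_{\normt{\cdot}_\calU}(\calU) \;=\; \max_{v\in\calU-\calU}\;\gamma_{\calU-\calU}(v),
\]
and the upper bound $\le 1$ is the same trivial observation in each. For the lower bound you exploit the specific structure of $\calU-\calU$ (closed, convex, solid, symmetric) to invoke the norm/unit-ball correspondence and pick a nonzero boundary point, which has gauge exactly $1$. The paper instead isolates a more general auxiliary lemma: for \emph{any} nonempty bounded set $\calC\ne\{0\}$ (no closedness, convexity, or symmetry assumed), one has $\sup_{x\in\calC}\gamma_\calC(x)=1$, proved by a short rescaling argument ($u/\lambda_k\in\calC$ with $\lambda_k\downarrow\gamma_\calC(u)$, then use positive homogeneity). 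Your route is a touch more geometric and tailored to the situation at hand; the paper's route buys a reusable statement that does not rely on $\calU-\calU$ being the closed unit ball of a norm. Either way the content is minimal, and your proof is fine as written.
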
 

\begin{proof}
See Appendix~\ref{app:proof_diamU}. 
\end{proof}

Now, let us turn our attention to the re-parameterized problem~\ref{eq:affine_reparam_P}, where $\rvM:\bbW\to \bbX$ is a linear bijection and $b\in\dom h$. If $\calU$ is solid, then  its counterpart in~\ref{eq:affine_reparam_P}, i.e., $\tilcalU =   \rvM^*(\calU)$, is solid as well. Therefore, following~\eqref{eq:norm_X}, we define $\normt{\cdot}_{\bbW,*}  :=  \normt{\cdot}_{\tilcalU} = \gamma_{\tilcalU-\tilcalU}$ and 
\begin{equation}
\normt{w}_\bbW := {\max}_{{v}\in \tilcalU-\tilcalU}\;  \ipt{v}{w} = {\max}_{{u}\in \calU-\calU}\;  \ipt{\rvM^* u}{w} = \normt{\rvM w}_\bbX, \quad \forall\,w\in\bbW. \label{eq:norm_W}
\end{equation}
To show $\diam_{\normt{\cdot}_\calU}(\calU)$ and $\mu_{\barcalS}$ are affine-invariant, we simply need to  show that they are equal to their counterparts in~\ref{eq:affine_reparam_P}, i.e., $\diam_{\normt{\cdot}_\calU}(\calU) = \diam_{\normt{\cdot}_{\tilcalU}}(\tilcalU)$ and $\mu_{\barcalS} = \mu_{\tilcalS}$, where $\tilcalS := \conv(\nabla \tilh^*(\tilcalU))$ and 
 \begin{align}
\mu_{\tilcalS}: = \inf\left\{\frac{\lambda \tilh( w) + (1-\lambda)\tilh(w')-\tilh((1-\lambda)w'+\lambda w)  }{(1/2) \lambda(1-\lambda)\normt{w'-w}_{\bbW}^2 }: \, w', w\in\tilcalS,\, w'\ne w,\, \lambda\in (0,1)\right\} \label{eq:def_mu_tilS} 
\end{align} 
if $\tilcalS$ is non-singleton, and $\mu_{\tilcalS}:= 1$ otherwise (cf.\ Assumption~\ref{assum:h}). 


  
  \begin{theorem} \label{thm:aff_inv_DA_analysis}
  Let $\calU$ be solid, $\rvM:\bbW\to \bbX$ be a linear bijection and $b\in\dom h$.
  If $\normt{\cdot}_{\bbX}$ and $\normt{\cdot}_{\bbW}$ are induced by $\calU$ and $\tilcalU$ as in~\eqref{eq:norm_X} and~\eqref{eq:norm_W}, respectively,  
  then 
  $\diam_{\normt{\cdot}_\calU}(\calU) = \diam_{\normt{\cdot}_{\tilcalU}}(\tilcalU)=1$  and $\mu_{\barcalS} = \mu_{\tilcalS}$. 
  \end{theorem}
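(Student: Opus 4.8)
The plan is to dispatch the two claims separately, leaning on Lemmas~\ref{lem:diamU} and~\ref{lem:well_defined_w}.

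\emph{The diameter identity.} Since $\rvM^*:\bbX^*\to\bbW^*$ is a linear bijection, the image $\tilcalU=\rvM^*(\calU)$ of the solid set $\calU$ is again solid, so $\normt{\cdot}_{\tilcalU}=\gamma_{\tilcalU-\tilcalU}$ is a genuine norm on $\bbW^*$. Applying Lemma~\ref{lem:diamU} to $\calU$ with the norm $\normt{\cdot}_\calU$ and then to $\tilcalU$ with the norm $\normt{\cdot}_{\tilcalU}$ gives $\diam_{\normt{\cdot}_\calU}(\calU)=1$ and $\diam_{\normt{\cdot}_{\tilcalU}}(\tilcalU)=1$, which is the asserted two-sided equality.

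\emph{Reducing $\tilcalS$ to an affine copy of $\barcalS$.} I would start from the conjugate formula $\tilh^*(v)=h^*((\rvM^{-1})^*v)-\ipt{v}{\rvM^{-1}b}$ obtained in~\eqref{eq:def_tilh*}. Since $\tilcalU\subseteq\inter\dom\tilh^*$ (established in the proof of Lemma~\ref{lem:well_defined_w}) and $h^*$ is differentiable on $\inter\dom h^*\supseteq\calU$, differentiating termwise and using $((\rvM^{-1})^*)^*=\rvM^{-1}$ yields $\nabla\tilh^*(v)=\rvM^{-1}\nabla h^*((\rvM^{-1})^*v)-\rvM^{-1}b$. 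Now every $v\in\tilcalU$ has the form $v=\rvM^*u$ with $u\in\calU$, and $(\rvM^{-1})^*\rvM^*=I$, so $\nabla\tilh^*(\rvM^*u)=\rvM^{-1}(\nabla h^*(u)-b)$. Because affine maps commute with the convex hull, $\tilcalS=\conv(\nabla\tilh^*(\tilcalU))=\rvM^{-1}(\barcalS-b)$, equivalently $\barcalS=\rvM(\tilcalS)+b$.

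\emph{The modulus identity.} With $\barcalS=\rvM(\tilcalS)+b$ in hand, $\mu_{\barcalS}=\mu_{\tilcalS}$ follows by exactly the chain of equalities carried out at the end of the proof of Lemma~\ref{lem:well_defined_w}: there it is shown that whenever $\calS'\subseteq\dom\tilh$ is nonempty, convex and compact and $\calS:=\rvM(\calS')+b$, one has $\mu_{\calS'}=\mu_{\calS}$, the computation being valid under the norm choice $\normt{w}_\bbW=\normt{\rvM w}_\bbX$. The norms prescribed in the statement satisfy precisely this relation --- indeed $\normt{w}_\bbW=\max_{v\in\tilcalU-\tilcalU}\ipt{v}{w}=\max_{u\in\calU-\calU}\ipt{\rvM^*u}{w}=\normt{\rvM w}_\bbX$ by~\eqref{eq:norm_W} and~\eqref{eq:norm_X} --- so applying that argument with $\calS'=\tilcalS$ and $\calS=\barcalS$ gives $\mu_{\tilcalS}=\mu_{\barcalS}$; the singleton case is immediate, since $\rvM$ being a bijection forces $\barcalS$ to be a singleton iff $\tilcalS$ is, and both moduli then equal $1$ by convention. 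The only point needing a modicum of care --- hence the main (minor) obstacle --- is the termwise differentiation of the conjugate formula together with the bookkeeping of adjoints between $\bbX$, $\bbW$ and their duals; everything else is a reuse of material already in place.
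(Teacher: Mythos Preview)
Your proposal is correct and follows essentially the same route as the paper: invoke Lemma~\ref{lem:diamU} twice for the diameter claim, differentiate the conjugate formula~\eqref{eq:def_tilh*} to obtain $\tilcalS=\rvM^{-1}(\barcalS-b)$, and then rerun the substitution argument from the end of Lemma~\ref{lem:well_defined_w} under the norm identity $\normt{w}_\bbW=\normt{\rvM w}_\bbX$ to conclude $\mu_{\tilcalS}=\mu_{\barcalS}$. The only cosmetic difference is that the paper writes out the final chain of equalities for $\mu_{\tilcalS}$ explicitly rather than pointing back to Lemma~\ref{lem:well_defined_w}.
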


\begin{proof}\renewcommand{\qedsymbol}{}
By Lemma~\ref{lem:diamU}, we clearly see that $\diam_{\normt{\cdot}_\calU}(\calU) = \diam_{\normt{\cdot}_{\tilcalU}}(\tilcalU)=1$, and hence we only need to show that $\mu_{\barcalS} = \mu_{\tilcalS}$. 
From~\eqref{eq:def_tilh*} and~\eqref{eq:def_tilQ}, we have 
\begin{align}
\tilcalS = \conv(\nabla \tilh^*(\tilcalU)) &=\conv\big(\rvM^{-1}\big(\nabla h^* \big(\big(\rvM^{-1}\big)^*\rvM^*(\calU)\big) - b\big)\big)\\
& = \conv(\rvM^{-1}(\nabla h^* (\calU) - b))\\
& = \rvM^{-1}(\conv(\nabla h^* (\calU) - b))\\
& = \rvM^{-1}(\conv(\nabla h^* (\calU)) - b) =\rvM^{-1}(\bar\calS - b). \label{eq:def_tilS} 
\end{align}
Now, note that by~\eqref{eq:def_tilS}, $\tilcalS$ is a singleton if and only if $\bar\calS$ is, in which case $\mu_{\barcalS} = \mu_{\tilcalS}=1$. For non-singleton $\tilcalS$, by the definition of $\tilh$ in~\ref{eq:affine_reparam_P},~\eqref{eq:def_mu_tilS},~\eqref{eq:def_tilS} and~\eqref{eq:norm_W}, we have 
\begin{align}
\begin{split}
\mu_{\tilcalS}& = \inf\Bigg\{\frac{\lambda h(\rvM w+b) + (1-\lambda)h(\rvM w'+b)-h((1-\lambda)\rvM w'+\lambda \rvM w+b)  }{(1/2) \lambda(1-\lambda)\normt{\rvM w'-\rvM w}_{\bbX}^2 }:\\
 &\hspace{16em} w', w\in\rvM^{-1}(\barcalS - b),\, w'\ne w,\, \lambda\in (0,1)\Bigg\}
 \end{split}\nn\\
 & = \inf\Bigg\{\frac{\lambda h(x) + (1-\lambda)h(x')-h((1-\lambda)x'+\lambda x)  }{(1/2) \lambda(1-\lambda)\normt{x'-x}_{\bbX}^2 }: x', x\in\barcalS,\, x'\ne x,\, \lambda\in (0,1)\Bigg\} = \mu_{\barcalS}. \tag*{$\square$}
\end{align}
\end{proof}


\begin{remark}

Note that the solidity of $\calU$ 
is not needed for the DA method in Algorithm~\ref{algo:DA} to be affine invariant (cf.~Theorem~\ref{thm:aff_inv_DA}),  
but is needed in~Theorem~\ref{thm:aff_inv_DA_analysis}  to show the affine invariance of the convergence rate analysis in Theorem~\ref{thm:pdgap}. Specifically, we need the solidity of $\calU$ 
to ensure that $\normt{\cdot}_{\calU}  := \gamma_{\calU-\calU}$ 
is indeed a norm, and also that $\tilcalU$ is solid under the linear bijection $\rvM:\bbW\to \bbX$, which in turn ensures that $\normt{\cdot}_{\tilcalU}  := \gamma_{\tilcalU-\tilcalU}$ is a norm. 
 That said, there may exist some other convergence rate analyses of the DA method that are affine invariant without requiring $\calU$ to be solid, and we leave this to future work. 

\end{remark}


%

\section{Relaxing the Globally Convex and Lipschitz Assumptions of $f$}


So far, all of our results have been obtained based on the globally convex and Lipschitz assumptions of $f$. 
 However, one easily observes that the optimal objective value of~\ref{eq:P}, as well as the optimal solution(s) of~\ref{eq:P} (if any),  only depends on 
the part of $f$ that is defined on $\calC:=\rvA(\dom h)$, which is a nonempty and convex set (but may not be closed). 
%
In view of this, the globally Lipschitz assumption of $f$ may seem unnecessarily restrictive, 
and the same observation also applies to the globally convex assumption of~$f$. 
(In fact, when the prox-function $h$ is strongly convex, 
the classical analysis of the DA method indeed only requires $f$ to be convex and Lipschitz on $\calC$ -- see e.g.,~\cite{Nest_09,Grigas_16}.)

In this section, we will relax the globally convex and Lipschitz  assumptions of $f$, and instead focus on the setting where $f$ is only convex and $L$-Lipschitz on 
$\calC$.  
 As we shall see, in this case, we can obtain a convex and globally $L$-Lipschitz extension of $f$, denoted by $F_L$, by leveraging the notion of Pasch-Hausdorff (PH) envelope (cf.\ Proposition~\ref{lem:Lipschitz_extension}). This allows us to replace $f$ with $F_L$ in~\ref{eq:P}, which results in an  equivalent problem of~\ref{eq:P} that 
 satisfies our original assumptions on~\ref{eq:P} listed in Section~\ref{sec:intro}. 
 We show that we can obtain a subgradient of $F_L$ at any $z\in \calC$ if given access to $\partial f(z)$ and $\calN_\calC(z)$, where $$\calN_\calC(z):=\{y\in\bbY:\ipt{y}{z'-z}\le 0, \,\forall\,z'\in\calC\}$$ denotes the normal cone of $\calC$ at $z$ (cf.\ Proposition~\ref{prop: partial_FL}). In addition, we provide ways to obtain $F_L^*$ from $f^*$ and $\sigma_\calC$ (i.e., the support function of $\calC$), as well as obtain $\dom F_L^*$ from $\dom f^*$ and $\dom \sigma_\calC$. 
 
As a passing remark, note that the discussions in this section are solely on the convex analytic properties of  $f$ and its extension $F_L$. Due to this, 
they are not only relevant in developing and analyzing Algorithms~\ref{algo:DA} and~\ref{algo:MDA}, but  any 
({feasible}) first-order method that {requires  $f$ in the objective to be globally convex and Lipschitz}. 

Before our discussions, we provide a simple example to illustrate the setting above.  
\begin{example} \label{eg:nonLips}
Consider the following optimization problem: 
\begin{equation}
{\min}_{x\in\bbR^n}\; \textstyle -\sum_{i=1}^m\; \ln(a_i^\top x) + \max_{i\in[m]}\, a_i^\top x + \sum_{i=1}^n x_i \ln x_i - x_i + \iota_\calX(x),  \label{eq:min_entropy}
\end{equation}
where $a_i\in\bbR_{++}^n$ for $i\in[m]$ and $\calX:=\bbR_+^n+e$. Putting~\eqref{eq:min_entropy} in the form of~\ref{eq:P}, we have
\[
\textstyle f:z\mapsto -\sum_{i=1}^m  \ln z_i + \max_{i\in[m]}\, z_i, \quad \rvA: x\mapsto Ax 
\quad\andd\quad  h: x\mapsto  \sum_{i=1}^n x_i \ln x_i - x_i + \iota_\calX(x),
\]
where $A := [a_1 \, \cdots\, a_m]^\top\in \bbR_{++}^{m\times n}$. 
 Clearly, $f$ 
is not globally Lipschitz on $\bbR^m$, but is Lipschitz on $\calC =\rvA(\calX) = \cone\{A_j\}_{j=1}^{n}+Ae $, where  $A_j$ denotes the $j$-th column of $A$, for $j\in[n]$. 
\end{example}

Now, let us present the main results in this section. We start by introducing the PH envelope. 


\begin{definition}[{Infimal convolution and the PH envelope~\cite[Section~12]{Bauschke_11}}] \label{def:PH}
Let $\bbU:=(\bbR^d, \normt{\cdot})$ be a normed space.  
Given two proper 
functions $\phi,\omega:\bbU\to\barbbR$, define their infimal convolution $\phi\,\square\, \omega : \bbU\to\barbbR\cup\{-\infty\}$ as 
\begin{equation}
(\phi\,\square\, \omega) (u) := \textstyle \inf_{u'\in \bbU}\;  \phi(u') + \omega(u-u'), \quad \forall\, u\in \bbU. 
\end{equation}
In particular, if $\omega = \gamma\normt{\cdot}$ for some $\gamma>0$, then $
f\,\square\, \gamma\normt{\cdot}$ is called the $\gamma$-PH 
envelope of $f$.
\end{definition}

Define $f_\calC := f+\iota_\calC$, which is proper, convex and $L$-Lipschitz on $\calC$. 
Based on Definition~\ref{def:PH},   let $F_L:=f_\calC\,\square\,L\normt{\cdot}_{*}$ be the $L$-PH envelope of $f_\calC$, i.e.,  
\begin{equation}
F_L(z) := \textstyle \inf_{z'\in \bbY^*}\;  f_\calC(z') + L\normt{z-z'}_*, \quad \forall\, z\in \bbY^*. \label{eq:PH_env}
\end{equation}


The following proposition shows that $F_L$ is indeed a globally convex and Lipschitz extension of $f$. The proof is rather simple and can be found in e.g.,~\cite[Section~12.3]{Bauschke_11}. For completeness, we provide its proof in Appendix~\ref{app:proof_Lipschitz_extension}. 

\begin{prop} \label{lem:Lipschitz_extension}
If $f$ is convex and $L$-Lipschitz on $\calC$, then $F_L$ is convex and $L$-Lipschitz on $\bbY^*$, and $F_L = f$ on $\calC$. 
In particular, $f_\calC = F_L + \iota_\calC$. 
\end{prop}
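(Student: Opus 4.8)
The plan is to verify the three claims of Proposition~\ref{lem:Lipschitz_extension} directly from the definition of the Pasch-Hausdorff envelope in~\eqref{eq:PH_env}, using only elementary properties of infimal convolutions together with the $L$-Lipschitz-on-$\calC$ hypothesis on $f$. The three things to show are: (i) $F_L$ is convex, (ii) $F_L$ is finite-valued and globally $L$-Lipschitz on $\bbY^*$, and (iii) $F_L = f$ on $\calC$; the final assertion $f_\calC = F_L + \iota_\calC$ is then immediate from (iii) since $f_\calC = f + \iota_\calC$ and $\iota_\calC$ forces agreement only on $\calC$.

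For convexity (i), I would note that $f_\calC = f + \iota_\calC$ is proper and convex (here I need $\calC = \rvA(\dom h) \ne\emptyset$ so that $f_\calC$ is proper), and $L\normt{\cdot}_*$ is convex, so the infimal convolution $F_L = f_\calC \,\square\, L\normt{\cdot}_*$ is convex; I should also record that $F_L > -\infty$ everywhere, which follows from the Lipschitz bound established next, and hence $F_L$ is proper. For (ii), finiteness: fix any $z_0 \in \calC$ (nonempty); then $F_L(z) \le f_\calC(z_0) + L\normt{z - z_0}_* < +\infty$, so $F_L$ is finite above. For the lower bound and the Lipschitz property simultaneously, I would show $F_L(z) \ge f_\calC(z_0) - L\normt{z-z_0}_*$ for any $z_0\in\calC$: for any $z'\in\bbY^*$, if $z'\notin\calC$ the term $f_\calC(z')+L\normt{z-z'}_* = +\infty$, and if $z'\in\calC$ then $f_\calC(z') = f(z') \ge f(z_0) - L\normt{z'-z_0}_* \ge f(z_0) - L\normt{z'-z}_* - L\normt{z-z_0}_*$ by the $L$-Lipschitz bound on $\calC$ and the triangle inequality, so $f_\calC(z') + L\normt{z-z'}_* \ge f(z_0) - L\normt{z-z_0}_*$; taking the infimum over $z'$ gives the claim. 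Global $L$-Lipschitzness of $F_L$ follows from a standard fact about PH envelopes: for any $z_1,z_2$, $F_L(z_1) = \inf_{z'}(f_\calC(z') + L\normt{z_1 - z'}_*) \le \inf_{z'}(f_\calC(z') + L\normt{z_2-z'}_* + L\normt{z_1-z_2}_*) = F_L(z_2) + L\normt{z_1-z_2}_*$, and by symmetry $\abst{F_L(z_1) - F_L(z_2)} \le L\normt{z_1-z_2}_*$.

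For (iii), I would show both inequalities. Taking $z' = z$ in the infimum gives $F_L(z) \le f_\calC(z) = f(z)$ for $z\in\calC$. Conversely, for $z\in\calC$ and any $z'\in\bbY^*$: if $z'\notin\calC$ the candidate value is $+\infty$; if $z'\in\calC$ then $f_\calC(z') + L\normt{z-z'}_* = f(z') + L\normt{z-z'}_* \ge f(z) - L\normt{z'-z}_* + L\normt{z-z'}_* = f(z)$ using the Lipschitz bound on $\calC$. Hence $F_L(z) \ge f(z)$, giving equality. Finally $f_\calC = F_L + \iota_\calC$: on $\calC$, $F_L = f = f_\calC - \iota_\calC$, i.e.\ $F_L + \iota_\calC = f = f_\calC$ there; off $\calC$, both sides equal $+\infty$. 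I do not anticipate a serious obstacle here — the only subtle point is being careful that $\calC$ need not be closed, so I should phrase all arguments pointwise in $z'\in\calC$ rather than invoking continuity or closedness of $f_\calC$, and I should make sure to explicitly use $\calC\ne\emptyset$ to guarantee properness and finiteness. The mild care needed is just in organizing the triangle-inequality manipulations cleanly.
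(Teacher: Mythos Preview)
Your proposal is correct and follows essentially the same approach as the paper: both use $z'=z$ for the upper bound and the $L$-Lipschitz-on-$\calC$ hypothesis for the lower bound to get $F_L=f$ on $\calC$, the triangle-inequality shift $F_L(z)\le F_L(v)+L\normt{z-v}_*$ for global Lipschitzness, and joint convexity of $(z,z')\mapsto f_\calC(z')+L\normt{z-z'}_*$ (equivalently, infimal convolution of convex functions) for convexity. Your version is slightly more explicit about properness and the lower bound $F_L>-\infty$, which is fine but not a departure from the paper's argument.
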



Based on Proposition~\ref{lem:Lipschitz_extension}, if $f$ is only convex and $L$-Lipschitz on $\calC$, 
then we can instead solve the following equivalent problem: 
\begin{equation}
{\min}_{x\in\bbX}\; F_L(Ax) + h(x), \tag*{$(\rm P_e)$} \label{eq:P_e}
\end{equation}
where $F_L$ indeed satisfies our original assumption about $f$ in Section~\ref{sec:intro}. 

One natural question that one may have about solving~\ref{eq:P_e} is that how to compute a subgradient of $F_L$ at given $z\in\bbY^*$.  
For $z\not\in\calC,$ this requires solving the optimization problem in~\eqref{eq:PH_env} in general. However, for $z\in\calC$, as we show in the next proposition, if we are given access to $\partial f(z)$ and $\calN_\calC(z)$, then we can obtain a subgradient $g\in \partial F_L(z)$ without solving the optimization problem in~\eqref{eq:PH_env}. This result is particularly relevant to most of the feasible first-order methods for solving~\ref{eq:P_e} (including both Algorithms~\ref{algo:DA} and~\ref{algo:MDA}), where the primal iterates $\{x_k\}_{k\ge 0}\subseteq\dom h$, and  subgradients of $F_L$ are computed at the iterates $\{\rvA x_k\}_{k\ge 0}\subseteq\calC$. 
%
%

\begin{prop} \label{prop: partial_FL}
Define $\calB_{\normt{\cdot}}(0,L):=\{y\in\bbY:\normt{y}\le L\}.$ 
For any $z\in\calC$, we have 
\begin{align}
&\; \big(\partial f(z) + \calN_\calC(z)\big) \cap\calB_{\normt{\cdot}}(0,L) \subseteq \partial f_\calC(z)\cap\calB_{\normt{\cdot}}(0,L)=\partial F_L(z)\ne \emptyset. \label{eq:fC_in_FL}
\end{align}
In addition, if $\ri\dom f \cap\ri\calC\ne \emptyset$, then the set inclusion  in~\eqref{eq:fC_in_FL} becomes equality. 
\end{prop}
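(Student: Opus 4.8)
The plan is to prove the three relations in~\eqref{eq:fC_in_FL} separately and then handle the equality case under the relative-interior condition. First I would establish the middle equality $\partial f_\calC(z)\cap\calB_{\normt{\cdot}}(0,L)=\partial F_L(z)$ and the nonemptiness of $\partial F_L(z)$. Since $F_L$ is convex and $L$-Lipschitz on all of $\bbY^*$ (Proposition~\ref{lem:Lipschitz_extension}), it is subdifferentiable everywhere and every subgradient has dual norm at most $L$, so $\partial F_L(z)\ne\emptyset$ and $\partial F_L(z)\subseteq\calB_{\normt{\cdot}}(0,L)$. Because $f_\calC=F_L+\iota_\calC$ (also from Proposition~\ref{lem:Lipschitz_extension}) and $F_L$ is finite and continuous at $z\in\calC$, the sum rule for subdifferentials gives $\partial f_\calC(z)=\partial F_L(z)+\calN_\calC(z)$ at any $z\in\calC$ (continuity of $F_L$ removes any constraint-qualification issue). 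Intersecting with $\calB_{\normt{\cdot}}(0,L)$ and using $\calN_\calC(z)\supseteq\{0\}$ shows $\partial F_L(z)\subseteq\partial f_\calC(z)\cap\calB_{\normt{\cdot}}(0,L)$; the reverse inclusion follows since any $g\in\partial f_\calC(z)$ with $\normt{g}\le L$ satisfies, for all $z'\in\bbY^*$, $F_L(z')\ge f_\calC(z')-L\normt{z'-z}_*+\ipt{g}{z'-z}-\ipt{g}{z'-z}$... more cleanly: for $z'\in\bbY^*$ pick $w\in\calC$; then $f_\calC(w)\ge f_\calC(z)+\ipt{g}{w-z}$ and $F_L(z')=\inf_w f_\calC(w)+L\normt{z'-w}_*\ge \inf_w f_\calC(z)+\ipt{g}{w-z}+L\normt{z'-w}_*\ge f_\calC(z)+\ipt{g}{z'-z}$ by the Fenchel--Young-type bound $\ipt{g}{w-z'}\le\normt{g}\normt{w-z'}_*\le L\normt{w-z'}_*$, and $f_\calC(z)=F_L(z)$, giving $g\in\partial F_L(z)$.

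Next I would prove the first inclusion $\big(\partial f(z)+\calN_\calC(z)\big)\cap\calB_{\normt{\cdot}}(0,L)\subseteq\partial f_\calC(z)\cap\calB_{\normt{\cdot}}(0,L)$. This is immediate from the general fact that $\partial f(z)+\calN_\calC(z)=\partial f(z)+\partial\iota_\calC(z)\subseteq\partial(f+\iota_\calC)(z)=\partial f_\calC(z)$, which holds for any proper convex functions without any qualification (the nontrivial direction of the sum rule is the reverse inclusion). Intersecting both sides with $\calB_{\normt{\cdot}}(0,L)$ finishes this part.

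For the final claim, when $\ri\dom f\cap\ri\calC\ne\emptyset$ the sum rule becomes exact: $\partial f_\calC(z)=\partial(f+\iota_\calC)(z)=\partial f(z)+\calN_\calC(z)$ for all $z\in\calC$ by~\cite[Theorem~23.8]{Rock_70} (noting $\partial\iota_\calC(z)=\calN_\calC(z)$ and $\ri\dom\iota_\calC=\ri\calC$). Combining with the middle equality already established, $\partial F_L(z)=\partial f_\calC(z)\cap\calB_{\normt{\cdot}}(0,L)=\big(\partial f(z)+\calN_\calC(z)\big)\cap\calB_{\normt{\cdot}}(0,L)$, so the first inclusion in~\eqref{eq:fC_in_FL} is in fact an equality.

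The main obstacle is being careful about which direction of the subdifferential sum rule needs a constraint qualification and which does not. The inclusion $\partial f(z)+\partial\iota_\calC(z)\subseteq\partial f_\calC(z)$ is free, but the exact sum rule $\partial f_\calC(z)=\partial F_L(z)+\calN_\calC(z)$ that I use for the middle equality relies on $F_L$ being finite-valued and continuous on a neighborhood of $z$ (true, since $F_L$ is globally Lipschitz), so no relative-interior hypothesis is needed there; whereas the exact decomposition $\partial f_\calC(z)=\partial f(z)+\calN_\calC(z)$ in the last sentence genuinely requires $\ri\dom f\cap\ri\calC\ne\emptyset$ because $f$ itself may be $+\infty$ near parts of $\calC$. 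I would make sure to invoke the correct theorem in each place and to double-check that the $\calB_{\normt{\cdot}}(0,L)$ intersection is harmless (it is, since $\partial F_L(z)$ already lies in that ball).
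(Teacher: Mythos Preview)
Your proof is correct and covers all the required pieces. The overall structure matches the paper's: the first inclusion and the equality case both go through Rockafellar's sum rule (Theorem~23.8) exactly as the paper does, and the nonemptiness of $\partial F_L(z)$ comes from $F_L$ being globally convex and Lipschitz in both.

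The one genuine difference is how you and the paper establish the middle equality $\partial f_\calC(z)\cap\calB_{\normt{\cdot}}(0,L)=\partial F_L(z)$. The paper works on the dual side: it first computes $F_L^*=f_\calC^*+\iota_{\calB_{\normt{\cdot}}(0,L)}$ (conjugate of an infimal convolution), rewrites $\partial F_L(z)$ as $\argmax_{\normt{y}\le L}\{\ipt{z}{y}-f_\calC^*(y)\}$, and then uses the Fenchel--Young equality $\ipt{z}{g}-f_\calC^*(g)=f_\calC(z)$ to show that any $g\in\partial f_\calC(z)$ with $\normt{g}\le L$ attains this maximum. You instead stay on the primal side: for the forward inclusion you invoke the exact sum rule $\partial f_\calC(z)=\partial F_L(z)+\calN_\calC(z)$ (valid because $F_L$ is continuous), and for the reverse inclusion you plug the subgradient inequality for $f_\calC$ directly into the infimal-convolution definition of $F_L$, using $\ipt{g}{w-z'}\ge -L\normt{w-z'}_*$. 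Both routes are short; yours avoids introducing $F_L^*$ altogether, while the paper's sets up the conjugate formula that is reused in the next proposition. Your care in distinguishing which sum-rule direction needs a qualification is exactly right.
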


\begin{proof}
The proof leverages simple and  basic convex analytic arguments --- see Appendix~\ref{app:proof_partial_FL}. 
\end{proof}

%

From Proposition~\ref{prop: partial_FL}, we know that for any $z\in\calC$, if there exist $g\in \partial f(z)$ and $g'\in\calN_\calC(z)$ such that $\normt{g+g'}\le L$, then $g+g'\in \partial F_L(z)$. Additionally, if  $\ri\dom f \cap\ri\calC\ne \emptyset$, then the converse is also true, namely, there must exist $g\in \partial f(z)$ and $g'\in\calN_\calC(z)$ such that $\normt{g+g'}\le L$. 
We also remark that if $\ri\dom f \cap\ri\calC= \emptyset$, then the converse fail to hold. For example, consider $f(z_1,z_2) = \abst{z_1} - \sqrt{z_2}$ with $\ri\dom f = \bbR\times \bbR_{++}$, and $\calC = \bbR\times\{0\}$. In this case, $\ri\dom f\cap\ri\calC = \emptyset$, $f_\calC = \iota_\calC$ and $F_L\equiv 0$ (with $L=0$). However, note that at any $z\in\calC$, $\partial f(z) = \emptyset$. 



Lastly, let us focus on $F_L^*$, which plays important roles in both Algorithms~\ref{algo:DA} and~\ref{algo:MDA}. 

\begin{prop} \label{prop:F_L*}
Let $\sigma_\calC:= \iota^*_\calC$ denotes the support function of $\calC$. 
We have 
\begin{equation}
F_L^*=   f_\calC^* + \iota_{\calB_{\normt{\cdot}}(0,L)}\quad\andd \quad \dom F_L^* = \dom f_\calC^*\cap \calB_{\normt{\cdot}}(0,L). 
\end{equation}
In addition, if $\ri\dom f \cap\ri\calC\ne \emptyset$, we have 
\begin{equation}
F_L^*=  ( f^*\,\square\, \sigma_\calC ) + \iota_{\calB_{\normt{\cdot}}(0,L)}\quad\andd \quad \dom F_L^* = (\dom f^* + \dom \sigma_\calC)\cap \calB_{\normt{\cdot}}(0,L). \label{eq:ri_nonempty}
\end{equation}
\end{prop}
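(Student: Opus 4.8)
The plan is to prove Proposition~\ref{prop:F_L*} in two stages, first establishing the general formulas for $F_L^*$ and $\dom F_L^*$, and then specializing under the regularity condition $\ri\dom f\cap\ri\calC\ne\emptyset$.

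\textbf{Stage 1: the general formula.} Recall $F_L = f_\calC\,\square\, L\normt{\cdot}_*$. The key fact is the standard conjugacy rule for infimal convolution: $(\phi\,\square\,\omega)^* = \phi^* + \omega^*$ whenever both $\phi,\omega$ are proper (see e.g.~\cite[Theorem~16.4]{Rock_70} or~\cite[Section~12]{Bauschke_11}). Applying this with $\phi = f_\calC$ and $\omega = L\normt{\cdot}_*$, I would compute $\omega^*$: the conjugate of $L\normt{\cdot}_*$ (a function on $\bbY^*$) is the indicator $\iota_{\calB_{\normt{\cdot}}(0,L)}$ of the dual-norm ball of radius $L$ on $\bbY$, since $\sup_{z}\,\ipt{y}{z} - L\normt{z}_* = 0$ if $\normt{y}\le L$ and $+\infty$ otherwise. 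Hence $F_L^* = f_\calC^* + \iota_{\calB_{\normt{\cdot}}(0,L)}$. Taking domains gives $\dom F_L^* = \dom f_\calC^*\cap\calB_{\normt{\cdot}}(0,L)$ immediately. One small point to check is that $f_\calC\,\square\, L\normt{\cdot}_*$ is proper (not identically $-\infty$) so that the conjugacy rule applies cleanly --- this follows from Proposition~\ref{lem:Lipschitz_extension}, which already tells us $F_L$ is convex, real-valued and $L$-Lipschitz on $\bbY^*$, hence proper and closed.

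\textbf{Stage 2: the refinement under $\ri\dom f\cap\ri\calC\ne\emptyset$.} Here I would rewrite $f_\calC^* = (f + \iota_\calC)^*$. Since $\ri\dom f\cap\ri\dom\iota_\calC = \ri\dom f\cap\ri\calC\ne\emptyset$, the conjugate-of-sum rule~\cite[Theorem~16.4]{Rock_70} gives $(f+\iota_\calC)^* = f^*\,\square\,\iota_\calC^* = f^*\,\square\,\sigma_\calC$, with the infimal convolution being exact (attained). Substituting into the Stage~1 formula yields $F_L^* = (f^*\,\square\,\sigma_\calC) + \iota_{\calB_{\normt{\cdot}}(0,L)}$. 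For the domain statement, I use $\dom(f^*\,\square\,\sigma_\calC) = \dom f^* + \dom\sigma_\calC$ (valid for infimal convolutions, see~\cite[pp.~34]{Rock_70}), so $\dom F_L^* = (\dom f^* + \dom\sigma_\calC)\cap\calB_{\normt{\cdot}}(0,L)$, which is~\eqref{eq:ri_nonempty}.

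\textbf{Anticipated obstacle.} The routine steps are the two conjugate calculus identities; the only place requiring care is verifying the hypotheses for the conjugate-of-sum rule in Stage~2 --- namely that $\ri\dom f\cap\ri\calC\ne\emptyset$ is exactly the relative-interior intersection condition needed, and that $f$ here means the (possibly non-globally-defined) convex function on $\bbY^*$ with $\dom f$ as given, so that $\ri\dom f$ is well-defined. There is also a mild subtlety in Stage~1: the conjugacy formula $(\phi\,\square\,\omega)^* = \phi^* + \omega^*$ holds unconditionally for proper functions, but to then assert $F_L^{**} = F_L$ (closedness) one wants $F_L$ proper and closed --- this is guaranteed by Proposition~\ref{lem:Lipschitz_extension}, so no separate argument is needed. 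I expect the whole proof to be short, essentially a bookkeeping exercise with Rockafellar's Theorem~16.4 and the conjugate of a norm, with the verification of the CQ for the sum rule being the only point deserving an explicit sentence.
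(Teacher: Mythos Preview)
Your proposal is correct and follows essentially the same route as the paper: the paper's proof simply cites the definitions of $F_L$ and $f_\calC$, Rockafellar's Theorem~16.4 (for both the conjugate of the infimal convolution in Stage~1 and the conjugate of the sum $f+\iota_\calC$ under the relative-interior CQ in Stage~2), and Rockafellar p.~34 for the domain of an infimal convolution. Your write-up spells out exactly these steps with the appropriate care about hypotheses, so there is nothing to add.
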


\begin{proof}
This proof follows from the definitions of $F_L$ and $f_\calC$,~\cite[Theorem~16.4]{Rock_70} and~\cite[pp.~34]{Rock_70}.  
\end{proof}

\begin{remark} \label{rmk:F_L*}
Note that in some cases, $\dom F_L^*$ can be much easier to find compared to $F_L^*$ itself. To see this, consider Example~\ref{eg:nonLips}, where $f = f_1+f_2$ for $f_1:z\mapsto  -\sum_{i=1}^m  \ln z_i$ and $f_2:z\mapsto \max_{i\in[m]}\, z_i$. Clearly, $\ri \dom f_1\cap \ri \dom f_2\ne \emptyset$, and we have $f^* = f_1^* \,\square \, f_2^*$, where $f_1^*:y \mapsto -\sum_{i=1}^n \ln(-y_i) - n$ and $f_2^*:y \mapsto \iota_{\Delta_n}(y)$. Note that 
$\dom f^*$ can be easily determined as follows (cf.~\cite[pp.~34]{Rock_70}): 
\begin{equation}
\dom f^* = \dom f_1^* + \dom f_2^* = \bbR^{n}_{--} + \Delta_n = \{y\in\bbR^n: \, \textstyle \sum_{i\in\calI}\, y_i < 1, \;\; \forall\, \calI\subseteq [n], \, \calI\neq \emptyset\}. 
\end{equation}
However, note that $f^*$ has no closed-form expression, but for any $y\in \dom f^*$, we can compute $f^*(y)$ and $\nabla f^*(y)$ via a procedure that terminates in at most $n$ steps. (In fact, as $f$ satisfies Assumption~\ref{assum:h}, by Lemma~\ref{lem:h^*_diff}, we know that $f^*$ is differentiable on  $\dom f^*$.) 
In addition, recall that $\calC =\rvA(\calX) = Ae +\calK$ and $\calK:=\cone\{A_j\}_{j=1}^{n}$, where $A = [A_1\,\cdots\, A_n]$. Then we have $\sigma_\calC(y) = \ipt{y}{Ae} + \sigma_\calK(y) = \ipt{y}{Ae} + \iota_{\calK^\circ}(y)$ for $y\in \bbY$, where 
$$\calK^\circ := \{y\in \bbR^n: \ipt{y}{z}\le 0, \, \forall\, z\in \calK\} = \{y\in \bbY: A^\top y\le 0\}$$ 
denotes the polar cone of $\calK$. Since we clearly have $\dom \sigma_\calC = \calK^\circ$ and $\ri\dom f \cap\ri\calC\ne \emptyset$, by~\eqref{eq:ri_nonempty}, we have the following explicit description of $\dom F_L^*$, namely 
\begin{equation}
\dom F_L^* = \{y + y': \textstyle\sum_{i\in\calI}\, y_i < 1, \; \forall\, \calI\subseteq [n], \, \calI\neq \emptyset, \;\; A^\top y'\le 0,\;\; \normt{y+y'}\le L\}. 
\end{equation}
In contrast, note that given some $y\in\dom F_L^*$, it is difficult to compute $F_L^*(y)$  in general. In fact, according to~\eqref{eq:ri_nonempty}, this amounts to evaluating $( f^*\,\square\, \sigma_\calC )(y)$, which involves a non-trivial convex optimization problem that typically requires some  iterative algorithms to solve. 
\end{remark}


\section{Related Work} \label{sec:related_work}

We review two lines of work that analyze different subgradient methods  without leveraging the strong convexity of the prox-function.    
As we shall see, although these works also focus on convex nonsmooth optimization, the problem settings therein are quite different from ours. 

\subsection{Barrier subgradient method}

In an interesting work~\cite{Nest_11},  Nesterov proposed the ``barrier subgradient method'' (BSM) for solving the concave maximization problem 
\begin{equation}
{\max}_{x\in\calC}\, f(x), \label{eq:concave_max}
\end{equation}
where $\calC\subseteq\bbR^n$ is the intersection of two closed convex sets $\calP$ and $\calQ$ (namely, $\calC:=\calP\cap\calQ$), and is assumed to be {\em bounded} and ``simple''. 
In addition, $\calQ$ contains no line and  is endowed with a (non-degenerate) self-concordant  barrier (SCB), which is denoted by $F$ (cf.~\cite[Section~2.3]{Nest_94}). 
The objective function $f:\bbR^n\to \bbR\cup\{-\infty\}$ is 
concave on $\bbR^n$ and  subdifferentiable on $\calC^o:= \calP\cap\inter\calQ$, and satisfies that 
\begin{equation}
\normt{g}_{x}^*\le M, \qquad \forall\, g\in \partial f(x), \;\; \forall\, x\in \calC^o, \label{eq:bounded_subgrad_x}
\end{equation}
where $\normt{g}_{x}^*:= \ipt{\nabla^2 F(x)^{-1} g}{g}^{1/2}$  and  $M\ge 0$.\footnote{Note that here we abuse slightly the notion of subdifferentiability, and call a concave function $f$ subdifferentiable on $\calC^o$ if for any $x\in \calC^o$, $\partial f(x):= \{g\in\bbR^n: f(y)\le f(x)+\ipt{g}{y-x},\; \forall\, y\in\bbR^n\}\ne \emptyset$.} 
Typical examples of $f$ have the form $f:=\ln\circ\, \psi$, where  $\psi$ is a concave, positive and subdifferentiable function on $\inter\calQ$ (cf.~\cite[Lemma~5]{Nest_11}). The BSM developed for solving this problem (cf.~\cite[Eqn.~(14)]{Nest_11}) 
has the same structure as the original DA method in~\cite{Nest_09}, 
 but with the key difference that  $F$, which is a SCB on $\calQ$, plays the role of the prox-function. Indeed, the analysis of BSM 
 heavily leverages the  SCB  properties of $F$,  rather than the strong convexity of $F$ on $\calC$ (if any).   Under proper parameter choices, Nesterov showed that the BSM has a primal-dual convergence rate of $O(\ln(k)/\sqrt{k})$ for solving~\eqref{eq:concave_max}.   On the surface level, the BSM and Algorithm~\ref{algo:DA} share certain similarities, since they have similar structures and both of their analyses do not leverage the strong convexity of the prox-function. However, it is important to note that the problems these two methods intend to solve, namely~\eqref{eq:concave_max} and~\ref{eq:P}, are vastly different, in terms of both the objective function and the feasible region. 
  As a result, these two methods are highly different in several aspects, including parameters choices, convergence rate analyses  and rates of convergence. 
 
 

%


\subsection{Bregman subgradient method under the ``relative continuity'' condition}

In an insightful work~\cite[Section~3.2.2]{Gutman_23}, Gutman and Pe\~na considered 
the following convex nonsmooth 
problem: 
\begin{equation}
{\min}_{x\in\bbX}\; \psi(x) + \Psi(x), \label{eq:cvx_min}
\end{equation}
where both $\psi,\Psi:\bbX\to\barbbR$ are closed and convex functions such that $\emptyset\ne \dom \Psi\subseteq \dom \partial \psi$. Let $h:\bbX\to\barbbR$ be another closed and convex function that is differentiable on $\inter\dom h\ne \emptyset$. 
The authors proposed new analysis for the Bregman proximal subgradient method (BPSM), which   starts with any $x^0\in \dom \Psi \cap \inter\dom h$, and 
iterates:
\begin{equation}
x^{k+1} \in {\argmin}_{x\in\bbX}\;\; \ipt{g_k}{x} + \Psi(x) + t_k^{-1} D_h(x,x^k), \quad\forall\,k\ge 1,  
\label{eq:BPSM}
\end{equation}
where $g^k\in \partial \psi(x^k)$, $t_k>0$ and $D_h(x,x^k):=  h(x) - h(x^k) - \ipt{\nabla h(x^k)}{x - x^k}$. Their new analysis is based on the following condition: there exists $M\in(0,+\infty)$ 
such that for any $x\in \dom h\cap\dom \Psi$, $x'\in\inter\dom h\cap\dom\Psi$, $g\in\partial \psi(x')$ and $t>0$,  
\begin{equation}
M^2t^2/2 + t(\Psi(x) - \Psi(x') +\ipt{g}{x-x'}) + D_h(x,x')\ge 0.  \label{eq:relative_cont}
\end{equation}
Such a condition can be viewed as a generalization of the ``relative continuity'' 
condition initially proposed in~\cite{Lu_19b} and~\cite[Section~4.2]{Teboulle_18} 
when $\Psi = \iota_\calC$ for some closed convex set $\calC\ne \emptyset$. 
As shown in~\cite{Gutman_23}, under this condition, the BPSM in~\eqref{eq:BPSM} has a primal converge rate of $O(1/\sqrt{k})$ 
with proper choices of $\{t_k\}_{k\ge 0}$.  Note that  the condition in~\eqref{eq:relative_cont} do not explicitly require the strong convexity of the prox-function $h$ (as well as the Lipschitz continuity of $\psi$ and $\Psi$). That said, a primary situation where~\eqref{eq:relative_cont} holds  is indeed when both $\psi$ and $\Psi$ are Lipschitz on $\dom \Psi$ and $h$ is 1-strongly convex on $\dom \Psi$ (cf.~\cite{Gutman_23}). 
On the other hand, it is important to note that for the class of prox-functions $h$ considered in this work (see Example~\ref{eg:sep_h}), 
the condition in~\eqref{eq:relative_cont} {\em may not hold} on~\ref{eq:P} (for $\psi= f\circ \rvA$ and $\Psi = h$), even for the simple problem instance $\min_{x\in\bbR}\, x - \ln x$. In addition, note that the BPSM is a primal subgradient scheme, and in contrast, Algorithms~\ref{algo:DA} and~\ref{algo:MDA} 
are both dual subgradient schemes. 
As a result, the BPSM  has rather different structure and convergence rate analysis compared to those of Algorithms~\ref{algo:DA} and~\ref{algo:MDA}.

\section{Concluding Remarks: a Perspective From Frank-Wolfe}

In the seminal work~\cite[Section~3.3]{Grigas_16}, Grigas showed that 
when $f=\sigma_\calQ$ and $h$ is strongly convex (on its domain), 
the DA method in Algorithm~\ref{algo:DA}, when viewed from the dual, can be regarded as the Frank-Wolfe (FW) method~\cite{Frank_56} for solving~\ref{eq:D} with $h^*$ being a globally convex and smooth function and $f^* = \iota_\calQ$. 
Specifically, in the context of FW, the main sequence of iterates is $\{\bars_k\}_{k\ge 0}$ (cf.~\eqref{eq:def_bars}), and the step-sizes are given by $\{\alpha_k/\beta_{k+1}\}_{k\ge 0}$, which are commonly referred to as the ``open-loop'' step-sizes. 
Although the focus of this work is primarily on DA-type methods for solving the primal problem~\ref{eq:P}, such a dual viewpoint in terms of FW offers  
two insights on Algorithms~\ref{algo:DA} and~\ref{algo:MDA} in the setting of this work. 

\begin{itemize}[leftmargin = 0ex,label={},topsep=0pt,itemsep=5pt]
\item First, under Assumptions~\ref{assum:h} and~\ref{assum:Q}, 
 by Lemma~\ref{lem:h^*}, we know that $h^*$ is indeed $\mu_{\barcalS}^{-1}$-smooth on $\calU$ (which is  nonempty, convex and compact; cf.~Lemma~\ref{lem:compact}). Note that $\calU$ can be interpreted as the de facto feasible region of~\ref{eq:D} --- in particular, if we let $f^* = \iota_\calQ$, then~\ref{eq:D} can be written as $\min_{u\in \calU} \, h^*(u)$. The smoothness of $h^*$ on $\calU$ implies that
~\ref{eq:D} is a composite convex smooth optimization problem, and 
Algorithm~\ref{algo:DA} can be viewed as a (generalized) FW method  for solving~\ref{eq:D}. As a result, we can apply the analyses of the FW method with ``open-loop'' step-sizes (see e.g.,~\cite{Jaggi_13,Freund_16,Bach_15,Wirth_25}) to analyze Algorithm~\ref{algo:DA}, and obtain similar primal-dual convergence rate guarantees to those in Theorem~\ref{thm:pdgap}. Note that compared to this ``dual'' approach, the  approach in the proof of Theorem~\ref{thm:pdgap} proceeds on the primal side by directly making use of the strong convexity of $h$ on $\barcalS$ (cf.~Lemma~\ref{lem:compact}), and  avoids establishing the smoothness  of $h^*$  on $\calU$. 

\item Second, note that when Assumptions~\ref{assum:h} holds but Assumption~\ref{assum:Q} fails to hold, the function $h^*$ is no longer smooth on the ``feasible region'' $\calU$. Rather, it is smooth on any nonempty, convex and compact set inside its domain (cf.\ Lemma~\ref{lem:h^*}), which is assumed to be open (cf.\ Assumption~\ref{assum:open_dom}). 
Note that this setting is quite ``non-standard'' in the literature of the FW method, which typically assumes that $h^*$ is smooth on $\calU$. As such, our newly developed DA-type method in Algorithm~\ref{algo:MDA} 
can be viewed as a new (generalized) FW-type method for solving~\ref{eq:D} under this ``non-standard'' setting. 
  It should be mentioned that a recent line of works (see e.g.,~\cite{Dvu_23,Zhao_23}) have considered the setting where $h^*$ has 
  the {\em (generalized) non-degenerate self-concordance (NSC) property} (and hence may not be smooth on $\calU$), and developed new FW-type methods that have primal-dual converge rates of order $O(1/k)$. Note that the development and/or analyses of these methods crucially leverage many important properties of $h^*$ implied by the (generalized) NSC 
  property, e.g., certain curvature bound of $h^*$ 
(cf.~\cite{Nest_94,Sun_18}).
We emphasize that 
our 
model of $h^*$ above 
{\em strictly subsumes} the class of (generalized) NSC 
functions (which indeed 
have open domains and are %
smooth on any nonempty, convex and compact set inside their domains), and 
moreover, our assumptions on $h^*$ are {\em much easier to verify} than the 
(generalized) NSC property. 
Therefore, compared to the existing FW-type methods for optimizing the (generalized) NSC functions, Algorithm~\ref{algo:MDA} is developed for a more general and easier-to-verify setting, 
yet still has a primal-dual converge rate of order $O(1/k)$. In addition, 
compared to those FW-type methods, 
the analysis and computational guarantees of Algorithm~\ref{algo:MDA} (cf.\ Theorem~\ref{thm:analysis_algo2} and Remark~\ref{rmk:complexity}) are mostly geometric in nature, and in particular, they do not depend on any ``global'' curvature bound of $h^*$ that holds over its entire domain. 

\vspace{1ex}
To conclude this section, let us make a remark about Section~\ref{sec:aff_inv}. 
From the discussions above, we know that Algorithm~\ref{algo:DA}
can be viewed as the FW method for solving~\ref{eq:D}, which is a composite convex smooth optimization problem.  As such, it is tempting to think that the affine invariance of Algorithm~\ref{algo:DA} and its analysis directly follow from those of the FW method (see e.g.,~\cite{Jaggi_13,Wirth_25}). However, note that this is not the case, since the affine invariance analysis of the FW method focuses on the affine re-parameterization of the dual problem~\ref{eq:D}, whereas our affine invariance analysis of Algorithm~\ref{algo:DA} (cf.\ Section~\ref{sec:aff_inv}) focuses on the affine re-parameterization of the primal problem~\ref{eq:P}. Specifically, 
under the the affine re-parameterization of~\ref{eq:P}, 
we need to verify the validity of Assumptions~\ref{assum:h} and~\ref{assum:Q}, and also 
show  the invariance of the convergence rate  in Theorem~\ref{thm:pdgap} (under certain appropriate choice of  $\normt{\cdot}_{\bbX}$). 



\end{itemize}


\vspace{1em}
{\bf Acknowledgment.} The author sincerely thanks Louis Chen for helpful discussions which lead to Lemma~\ref{lem:assump_h_dual_attain}.

\appendix

\section{Proof of Lemma~\ref{lem:suff_h} } \label{app:proof_suff_h}

If $\calS$ is a singleton, then Lemma~\ref{lem:suff_h} holds trivially. Thus we focus on the case where $\calS$ is  not a singleton. Take any $x,y\in \calS$ such that $x\ne y$, and  
fix any sequence $\{\lambda_k\}_{k\ge 0}\subseteq (0,1)$ such that $\lambda_k\to 0$. Define $x^k := (1-\lambda_k) x + \lambda_k z$ and $y_k := (1-\lambda_k) y + \lambda_k z$, so that $x^k,y^k\in \calS_z^o$ for $k\ge 0$, and $x^k \to x$ and $y^k \to y$. We claim that $h(x^k)\to h(x)$. Indeed, 
we have
\begin{equation}
\limsup_{k\to+\infty}\, h(x^k)\lea \limsup_{k\to+\infty}\, (1-\lambda_k) h(x) + \lambda_k h(z) = h(x) \leb \liminf_{k\to+\infty}\, h(x^k), \label{eq:limsup_inf}
\end{equation}
where (a) and (b) follow from the convexity and closedness of $h$, respectively. By~\eqref{eq:limsup_inf}, we have $\lim_{k\to+\infty}\, h(x^k) = h(x)$. Similarly, we have $h(y^k)\to h(y)$. Now, fix any   $t\in(0,1)$. 
Since $x^k,y^k\in \calS_z^o$, by~\eqref{eq:kappa_S}, we know that 
\begin{equation}
h((1-t) x^k + ty^k)\le (1-t) h( x^k) + th(y^k) - ({\kappa_{\calS_z} (1-t)t}/{2})\normt{x^k-y^k}^2, 
\end{equation}
and hence by the closedness of $h$, we have 
\begin{align*}
h((1-t) x + ty)&\le \liminf_{k\to+\infty} \, h((1-t) x^k + ty^k)\\
&\le \liminf_{k\to+\infty} \, (1-t) h( x^k) + th(y^k) - ({\kappa_{\calS_z} (1-t)t}/{2})\normt{x^k-y^k}^2\\
&=  (1-t) h( x) + th(y) - ({\kappa_{\calS_z} (1-t)t}/{2})\normt{x-y}^2. 
\end{align*}
This completes the proof.  \qed

\section{Proof of Lemma~\ref{lem:sep} } \label{app:proof_sep}

Write $h(x) = \sum_{i=1}^n h_i(x_i)$, and note that 
\begin{enumerate}[label=\roman*),leftmargin=15pt,itemsep=5pt,parsep=0pt,topsep=1pt]
\item $h$ is very strictly convex and Legendre on $\bbR^n$ if and only if for each $i\in[n]$, $h_i$ is very strictly convex and Legendre on $\bbR$, and 
\item $\normt{\nabla^2 h(x^k)}\to +\infty$ for any $\{x^k\}_{k\ge 0}\subseteq \inter\dom h$ such that $x^k\to x\in \bdry \dom h$ if and only if for each $i\in[n]$, $h_i''(t_k)\to +\infty$ for any $\{t_k\}_{k\ge 0}\subseteq \inter\dom h_i$ such that $t_k\to t\in \bdry\dom h_i$. 
\end{enumerate}
With loss of generality, let $\dom h_i = [a_i,+\infty)$ for $i\in[n]$, and hence $\dom h:= \prod_{i=1}^n [a_i,+\infty)$. Define  $a:= (a_1,\ldots,a_n)$ and $\calD:= \dom h\setminus\{a\}$. 
Define $g:\bbR^n \to \barbbR$ such that 
\begin{align}
g(x) = \Bigg\{
\begin{aligned}
&\min_{i\in\calI(x)} h''(x_i), \quad &&  x\in \calD, \quad \mbox{for }\;\; \calI(x):= \{i\in[n]: x_i > a_i\}\ne \emptyset \\ 
&+\infty, \quad && x\not\in \calD  
\end{aligned}
\end{align}
Note that $\dom g = \calD$. 
Since $h_i''(t)>0$ for $t\in(a_i,+\infty)$, we have $g(x)>0$ for all $x\in \calD$.  
Next, we analyze the behavior of $g$ near $\bdry\dom h$.  Since for $i\in[n]$, $h_i''$ is continuous on $(a_i,+\infty)$ and $h_i''(t_k)\to +\infty$ for any $t_k\downarrow a_i$, for any $\{x^k\}_{k\ge 0}\subseteq \calD$ such that $x^k\to x\in \calD$, 
we have 
\begin{equation}
\lim_{k\to+\infty} g(x^k) = \lim_{k\to+\infty} \min_{i\in\calI(x^k)} h_i''(x^k_i) = \lim_{k\to+\infty} \min_{i\in\calI(x)} h_i''(x^k_i) = \min_{i\in\calI(x)} h_i''(x_i) = g(x).  \label{eq:conv_g}
\end{equation}
In addition, for any $\{x^k\}_{k\ge 0}\subseteq \calD$ such that $x^k\to a$, $g(x^k)\to +\infty$. This allows us to conclude that $g$ is a closed function. Indeed, let $\{(x^k,\tau_k)\}_{k\ge 0}\subseteq\epi g$ such that $(x^k,\tau_k)\to (x,\tau)$, where $\epi g$ denotes the epigraph of $g$.  Clearly $x\in\calD$, and by~\eqref{eq:conv_g},  we know that $g(x^k)\to g(x)$. Since $\tau_k\to \tau$ and $g(x^k)\le \tau_k$ for all $k\ge 0$, we have $g(x)\le \tau$. This shows that $(x,\tau)\in \epi g$.  

Next, note that for any $x\in \inter\calD$, we have $\nabla^2 h(x) = \Diag(h_1''(x_1),\ldots, h_n''(x_n))$, and hence   
\[
g(x) = {\min}_{i=1}^n\, h_i''(x_i) = {\min}_{\normt{z}_2=1}\, \ipt{\nabla^2 h(x) z}{z} = \alpha\, {\min}_{\normt{z}=1}\, \ipt{\nabla^2 h(x) z}{z} = \alpha\,\lambda_{\min}(\nabla^2 h(x)),
\]
where $\alpha>0$ is a constant independent of $x$. 
Now, fix any nonempty convex compact set $\calS\subseteq \dom h$ and any $z\in\inter\dom h$. Let $\calL_z:= \{x\in\bbX:g(x)\le g(z) \}\subseteq\calD$, which is nonempty and closed. Using the notations in Lemma~\ref{lem:suff_h}, we know that 
$\calS_z\cap\calL_z$ is nonempty and compact, and
\begin{align*}
\alpha\inf_{x\in \calS_z^o}\; \lambda_{\min} (\nabla^2 h(x)) = \inf_{x\in \calS_z^o}\; g(x)\gea \inf_{x\in \calS_z\cap\calD}\; g(x) \eqb \inf_{x\in \calS_z\cap\calD\cap\calL_z}\; g(x)
 \eqc \min_{x\in \calS_z\cap\calL_z}\; g(x) 
 \gsd 0,
\end{align*}
where (a) follows from $\inter\dom h\subseteq\calD$, (b) follows from $z\in \calS_z\cap \calD$, 
(c) follows from $\calL_z\subseteq\calD$ and that $\calS_z\cap\calL_z\ne \emptyset$ is  compact,  
and (d) follows from $\calS_z\cap\calL_z\subseteq\calD$ and $g(x)>0$ for all $x\in \calD$. Now, invoking Lemma~\ref{lem:suff_h}, we complete the proof. \qed

\section{Proof of Lemma~\ref{lem:seq} } \label{app:proof_seq}


Since for $k\ge k_0$, $a_k\le b_k$, we have 
\begin{equation}
a_{k+1}\le (1 - \tau_k) a_k + (A/2) \tau_k^2 \quad \Longrightarrow\quad  \beta_{k+1} a_{k+1}\le \beta_{k+1}(1 - \tau_k) a_k + (A/2) \beta_{k+1}\tau_k^2. \label{eq:seq0}
\end{equation}
By the choices of $\{\alpha_k\}_{k\ge 0}$ and $\{\beta_k\}_{k\ge 0}$ in~\ref{eq:step}, and that $\tau_k:= {\alpha_k }/\beta_{k+1}$,  we know that 
\begin{equation}
\beta_{k+1}(1 - \tau_{k}) = \beta_{k+1} - \alpha_k = \beta_k, \label{eq:seq1}
\end{equation}
and hence by~\eqref{eq:seq0}, we have
\begin{equation}
\beta_{k+1} a_{k+1}\le \beta_k a_k + (A/2) \beta_{k+1}\tau_k^2. \label{eq:seq_1.5}
\end{equation} 
For $k\ge k_0+1$, telescope~\eqref{eq:seq_1.5} over $i=k_0,\ldots,k-1$, and we have 
\begin{align}
a_k&\le \frac{\beta_{k_0}a_{k_0} + (A/2)\sum_{i=k_0}^{k-1}\beta_{i+1} \tau_i^2}{\beta_k } = \frac{\beta_{k_0}a_{k_0} + (A/2)\sum_{i=k_0}^{k-1} \alpha_i^2/\beta_{i+1}  }{\beta_{k} }. \label{eq:seq_1.8}
\end{align}
Substitute the choices of $\{\alpha_k\}_{k\ge 0}$ and $\{\beta_k\}_{k\ge 0}$ in~\ref{eq:step} into~\eqref{eq:seq_1.8}, and we arrive at~\eqref{eq:a_k}.  
Now, telescope the second inequality in~\eqref{eq:recur} over $i=\bark,\ldots,k-1$ for some $k_0\le \bark \le k-1$, and we have
\begin{align}
\textstyle 0\le a_k \le a_{\bark} - \sum_{i=\bark}^{k-1}\, \tau_i b_i + (A/2) \sum_{i=\bark}^{k-1}\,\tau_i^2. \label{eq:seq3}
\end{align}
Since $\tau_k = 2/(k+2)$ for $k\ge k_0$, we have 
\begin{align}
\textstyle &\sum_{i=\bark}^{k-1}\,\tau_i  \ge (k - \bark) \tau_{k-1} = \dfrac{2(k - \bark)}{k+1}, \quad \andd\\
\textstyle & \sum_{i=\bark}^{k-1}\,\tau_i^2 \le  4\sum_{i=\bark}^{k-1}\,\frac{1}{(i+1)(i+2)} = 4\left(\frac{1}{\bark+1} - \frac{1}{k+1}\right) = \frac{4(k-\bark)}{(\bark+1)(k+1)}.
\end{align}
As a result, from~\eqref{eq:seq3}, we have 
\begin{align}
\min_{i=\bark, \ldots, k-1}\; b_i \le \frac{ a_{\bark}+ (A/2) \sum_{i=\bark}^{k-1}\,\tau_i^2}{\sum_{i=\bark}^{k-1}\,\tau_i}\le \frac{k+1}{2(k - \bark)}a_{\bark} + \frac{A}{\bark+1}.  \label{eq:seq_min_b} 
\end{align}
Let $\bark = \lfloor (k+k_0)/2 \rfloor\ge k_0$, so that 
\begin{equation}
\frac{k+k_0-1}{2} \le \bark \le \frac{k+k_0}{2} \le \bark + 1. \label{eq:seq_floor}
\end{equation} 
If $k\ge k_0 + 2 $, then $\bark\ge k_0 + 1$, and from~\eqref{eq:a_k} and~\eqref{eq:seq_floor}, we have 
\begin{align}
\frac{k+1}{2(k - \bark)}a_{\bark}  &\le \frac{k+1}{2(k - \bark)}\cdot\frac{{k_0}(k_0+1)a_{k_0} + 2A(\bark-k_0)}{\bark(\bark+1)}\\
&\le \frac{k+1}{k - k_0}\cdot\frac{{k_0}(k_0+1)a_{k_0} + 2A(k-k_0)}{(k+k_0-1)(k+k_0)/4} \label{eq:seq_3.5} \\
&\le \frac{12({k_0}(k_0+1)a_{k_0} + 2A(k-k_0))}{(k - k_0)(k+k_0)},  \label{eq:seq4}
\end{align}
where~\eqref{eq:seq_3.5} follows from~\eqref{eq:seq_floor}  and~\eqref{eq:seq4} follows from 
\begin{equation}
\frac{k+1}{k+k_0-1}\le \frac{k+1}{k-1} \le 3, \quad \forall\,k\ge 2. 
\end{equation}
In addition, we have ${A}/({\bark+1})\le 2A/(k+k_0)$, and so from~\eqref{eq:seq_min_b}, we have 
\begin{align}
\min_{i=\lfloor (k+k_0)/2 \rfloor, \ldots, k-1}\; b_i \le
 \frac{12(k_0+1)^2 }{(k - k_0)(k+k_0)} a_{k_0}+ 
\frac{26A}{k+k_0}, \quad \forall\, k\ge k_0 + 2. \label{eq:seq_final}  
\end{align}
Lastly, note that if $k= k_0 + 1 $, then $\bark= k_0 $, and from~\eqref{eq:seq_min_b}, we have
\begin{align}
b_{k_0}\le \left(\frac{k_0}{2}+1\right)a_{k_0} + \frac{A}{k_0+1}\le \frac{12(k_0+1)^2}{2k_0+1}a_{k_0} + \frac{26A}{2k_0+1},  \label{eq:seq_final1}
\end{align}
and the second inequality precisely corresponds to the right-hand side of~\eqref{eq:seq_final} when $k= k_0 + 1 $.  
Combining~\eqref{eq:seq_final} and~\eqref{eq:seq_final1}, we arrive at~\eqref{eq:b_k}. \qed

\section{Proof of Lemma~\ref{lem:Delta} } \label{app:proof_Delta}

Let us first prove a more general result. 

\begin{lemma} \label{lem:dist_AB}
Let $\bbU:=(\bbR^d, \normt{\cdot})$ be a normed space,  $\emptyset\ne \calA\subseteq \bbU$ be  compact, and $\emptyset\ne \calB\subseteq \bbU$ be closed. Then there exist $a\in\calA$ and $b\in\calB$ such that $\dist_{\normt{\cdot}}(\calA,\calB) = \normt{a-b}$. 
\end{lemma}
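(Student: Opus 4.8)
\textbf{Proof proposal for Lemma~\ref{lem:dist_AB}.} The plan is a standard compactness argument. Let $\delta := \dist_{\normt{\cdot}}(\calA,\calB) = \inf\{\normt{u-u'} : u\in\calA,\, u'\in\calB\}$, which is finite and nonnegative since $\calA,\calB$ are nonempty. By definition of the infimum, there exist sequences $\{a_k\}_{k\ge 0}\subseteq \calA$ and $\{b_k\}_{k\ge 0}\subseteq\calB$ such that $\normt{a_k - b_k}\to \delta$. Since $\calA$ is compact, by passing to a subsequence (which I will not relabel) we may assume $a_k \to a$ for some $a\in\calA$.

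Next I would argue that $\{b_k\}$ is bounded: indeed $\normt{b_k}\le \normt{b_k - a_k} + \normt{a_k}$, and the first term converges (hence is bounded) while $\{a_k\}$ is convergent (hence bounded). Therefore, by the Bolzano--Weierstrass theorem in the finite-dimensional space $\bbU$, after passing to a further subsequence we may assume $b_k \to b$ for some $b\in\bbU$; since $\calB$ is closed, $b\in\calB$. Finally, by continuity of the norm,
\begin{equation}
\normt{a-b} = \lim_{k\to+\infty}\, \normt{a_k - b_k} = \delta = \dist_{\normt{\cdot}}(\calA,\calB),
\end{equation}
which proves the claim.

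\textbf{Deducing Lemma~\ref{lem:Delta}.} Now suppose Assumption~\ref{assum:open_dom} holds and $\dom h^*\subsetneqq \bbX^*$, so that $\bdry\dom h^*\ne\emptyset$. Recall $\bar\calU = -\rvA^*(\calL)$ is nonempty, convex and compact (established just before the statement), and $\bdry\dom h^*$ is nonempty and closed (boundaries of sets are always closed). Apply Lemma~\ref{lem:dist_AB} with $\bbU = \bbX^*$ equipped with $\normt{\cdot}_*$, $\calA = \bar\calU$ and $\calB = \bdry\dom h^*$: there exist $u\in\bar\calU$ and $u'\in\bdry\dom h^*$ with $\Delta = \dist_{\normt{\cdot}_*}(\bar\calU,\bdry\dom h^*) = \normt{u-u'}_*$. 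It remains to show $\Delta > 0$. Since $\bar\calU\subseteq\dom h^*$ and $\dom h^*$ is open, every point of $\bar\calU$ is an interior point of $\dom h^*$, hence $\bar\calU\cap\bdry\dom h^* = \emptyset$, so $u\ne u'$ and $\Delta = \normt{u-u'}_* > 0$.

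\textbf{Main obstacle.} There is no serious obstacle here; the only point requiring a little care is establishing that $\{b_k\}$ is bounded so that Bolzano--Weierstrass applies (the set $\calB$ is only assumed closed, not compact) — this is handled via the triangle inequality as above. The deduction of Lemma~\ref{lem:Delta} is then immediate once one notes that openness of $\dom h^*$ forces $\bar\calU$ and $\bdry\dom h^*$ to be disjoint.
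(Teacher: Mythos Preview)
Your proof is correct. The paper takes a slightly different route: it defines the function $\Phi(u,u') := \normt{u-u'} + \iota_{\calA}(u) + \iota_{\calB}(u')$ on $\bbU\times\bbU$, observes that it is proper and closed, and shows it is coercive by noting that for any $r\ge 0$ the sublevel set $\{(u,u')\in\calA\times\calB:\normt{u-u'}\le r\}$ is bounded (since $\calA$ is bounded). A proper, closed, coercive function on a finite-dimensional space attains its minimum, which gives the desired $(a,b)$. Your direct sequential-compactness argument and the paper's coercivity framing are really two packagings of the same idea---the boundedness of $\calA$ is what forces $\{b_k\}$ (equivalently, the sublevel sets of $\Phi$) to be bounded---so neither buys anything the other doesn't; yours is perhaps more elementary, the paper's slightly more streamlined once one is comfortable invoking the existence theorem for coercive functions. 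Your deduction of Lemma~\ref{lem:Delta} matches the paper's exactly.
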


\begin{proof}[Proof of Lemma~\ref{lem:Delta}.]
Based on Lemma~\ref{lem:dist_AB}, 
we simply note that $\emptyset\ne \bar\calU\subseteq \dom h^*$ is compact and $\bdry\dom h^*\ne\emptyset$ is closed, 
and $\bar\calU\cap \bdry\dom h^*
=\emptyset$ under Assumption~\ref{assum:open_dom}. 
\end{proof}

\begin{proof}[Proof of Lemma~\ref{lem:dist_AB}]
Consider the proper and closed function
\begin{equation}
\Phi(u,u'):= \normt{u-u'} + \iota_{\calA}(u) + \iota_{\calB}(u'), \quad \forall\, u,u'\in \bbU,
\end{equation}
such that $\inf_{u,u'\in\calU}\; \Phi(u,u') = \dist_{\normt{\cdot}}(\calA,\calB)$. 
Note that $\Phi$ is coercive: indeed, take any $r\ge 0$, then the $r$-sub-level set of $\Phi$, namely
\begin{equation}
\calL_r:=\{(u,u')\in \bbU\times \bbU:\Phi(u,u')\le r \} = \{u\in\calA,u'\in\calB:\normt{u-u'}\le r \},
\end{equation}
is clearly bounded. Since $\Phi$ is proper, closed and coercive, it has a minimizer $(a,b)\in \calA\times \calB$ and hence $\dist_{\normt{\cdot}}(\calA,\calB) = \inf_{u,u'\in\calU}\; \Phi(u,u') = \normt{a-b} $. 
\end{proof}

\section{Proof of Lemma~\ref{lem:Ur} } \label{app:proof_Ur}

The proof of Lemma~\ref{lem:Ur} relies on the following three lemmas.

\begin{lemma} \label{lem:dist_func}
Let $\bbU:=(\bbR^d, \normt{\cdot})$ be a normed space and  $\emptyset\ne \calA\subseteq \bbU$. Then the distance  function $\dist_{\normt{\cdot}}(\cdot,\calA):\bbU\to \bbR$ is 1-Lipschitz on $\bbU$. In addition, it is convex if $\calA$ is convex. 
\end{lemma}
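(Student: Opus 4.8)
\textbf{Proof plan for Lemma~\ref{lem:dist_func}.}

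The plan is to prove the two claims directly from the definition $\dist_{\normt{\cdot}}(u,\calA):= \inf_{a\in\calA}\normt{u-a}$, using only the triangle inequality and elementary properties of infima. First I would establish the $1$-Lipschitz property. Fix $u,v\in\bbU$ and any $a\in\calA$. By the triangle inequality, $\normt{u-a}\le \normt{u-v}+\normt{v-a}$, so taking the infimum over $a\in\calA$ on the left gives $\dist_{\normt{\cdot}}(u,\calA)\le \normt{u-v}+\normt{v-a}$ for every $a\in\calA$; now taking the infimum over $a$ on the right yields $\dist_{\normt{\cdot}}(u,\calA)\le \normt{u-v}+\dist_{\normt{\cdot}}(v,\calA)$, i.e. $\dist_{\normt{\cdot}}(u,\calA)-\dist_{\normt{\cdot}}(v,\calA)\le \normt{u-v}$. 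Swapping the roles of $u$ and $v$ gives the reverse inequality, hence $\abst{\dist_{\normt{\cdot}}(u,\calA)-\dist_{\normt{\cdot}}(v,\calA)}\le \normt{u-v}$, which is $1$-Lipschitzness. (Here one should note $\dist_{\normt{\cdot}}(\cdot,\calA)$ is real-valued and finite since $\calA\ne\emptyset$.)

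Next I would prove convexity assuming $\calA$ is convex. Fix $u,v\in\bbU$ and $\lambda\in(0,1)$. Given $a,a'\in\calA$, convexity of $\calA$ gives $\lambda a+(1-\lambda)a'\in\calA$, so
\begin{align}
\dist_{\normt{\cdot}}(\lambda u+(1-\lambda)v,\calA)&\le \normt{\lambda u+(1-\lambda)v-(\lambda a+(1-\lambda)a')}\nn\\
&\le \lambda\normt{u-a}+(1-\lambda)\normt{v-a'},\nn
\end{align}
using the triangle inequality and homogeneity of the norm. Taking the infimum over $a\in\calA$ and then over $a'\in\calA$ on the right-hand side yields $\dist_{\normt{\cdot}}(\lambda u+(1-\lambda)v,\calA)\le \lambda\dist_{\normt{\cdot}}(u,\calA)+(1-\lambda)\dist_{\normt{\cdot}}(v,\calA)$, which is the desired convexity.

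There is essentially no hard part here: the entire argument is a two-step ``infimize separately'' manipulation built on the triangle inequality, and the only mild subtlety is being careful about the order in which infima are taken (and noting finiteness from $\calA\ne\emptyset$). I would keep the write-up short, presenting the Lipschitz estimate and the convexity estimate as two brief displays as above.
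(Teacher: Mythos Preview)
Your proposal is correct and follows essentially the same approach as the paper: both parts use the triangle inequality together with an infimum argument, and the convexity step uses the convex combination $\lambda a+(1-\lambda)a'\in\calA$ just as you do. The only cosmetic difference is that the paper handles the infima in the convexity part via an $\epsilon$-approximation (choosing $a,a'$ with $\normt{u-a}\le \dist(u,\calA)+\epsilon$, etc.) and then lets $\epsilon\to 0$, whereas you infimize directly; the two are equivalent.
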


\begin{lemma} \label{lem:pass_through_boundary}
Let $\bbU:=(\bbR^d, \normt{\cdot})$ be a normed space and $\emptyset\ne \calA\subsetneqq \bbU$. For any $u\in\calA$ and $u\in\calA^c$, define $[u,u']:= \conv(\{u,u'\})$. Then $[u,u']\cap\bdry\calA\ne\emptyset$. 
\end{lemma}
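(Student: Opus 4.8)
Proof proposal for Lemma~\ref{lem:pass_through_boundary} (the "passing through the boundary" lemma).

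The plan is to use a one-dimensional intermediate-value argument applied to the distance function. First I would set up the parametrized segment $\varphi(t) := (1-t)u + t u'$ for $t\in[0,1]$, so $\varphi(0) = u \in\calA$ and $\varphi(1) = u'\in\calA^c$. The goal is to produce a point of $[u,u']$ that lies in $\bdry\calA$. The key idea is to look at the real-valued function $t\mapsto \dist_{\normt{\cdot}}(\varphi(t),\calA)$; by Lemma~\ref{lem:dist_func} this is $1$-Lipschitz in its first argument, and since $t\mapsto\varphi(t)$ is (Lipschitz) continuous, the composition $g(t) := \dist_{\normt{\cdot}}(\varphi(t),\calA)$ is continuous on $[0,1]$.

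Next I would identify the candidate point. Let $t^* := \sup\{t\in[0,1]: \varphi(s)\in\cl\calA \text{ for all } s\in[0,t]\}$; equivalently, since $\cl\calA = \{v: \dist_{\normt{\cdot}}(v,\calA)=0\}$, one can take $t^* := \sup\{t\in[0,1]: g(s) = 0 \ \forall\, s\in[0,t]\}$. This set is nonempty (it contains $t=0$ because $u\in\calA\subseteq\cl\calA$, so $g(0)=0$) and bounded above by $1$, so $t^*$ is well-defined. I claim $w := \varphi(t^*)\in\bdry\calA$. On the one hand, by continuity of $g$ and the definition of $t^*$ as a supremum of points where $g$ vanishes, $g(t^*) = 0$, so $w\in\cl\calA$. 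On the other hand, I must show $w\notin\inter\calA$: this I would prove by contradiction --- if $w\in\inter\calA$, then either $t^* = 1$, which is impossible since $u' = \varphi(1)\in\calA^c$ and $\calA^c$ is disjoint from $\inter\calA$; or $t^* < 1$, and then an open ball around $w$ lies in $\calA$, so $\varphi(s)\in\calA$ (hence $g(s)=0$) for all $s$ in a neighborhood of $t^*$ in $[0,1]$, including some $s > t^*$, contradicting the maximality of $t^*$. Therefore $w\in\cl\calA\setminus\inter\calA = \bdry\calA$, and since $w\in[u,u']$ we conclude $[u,u']\cap\bdry\calA\ne\emptyset$.

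The main obstacle --- really the only delicate point --- is handling the boundary case $t^* = 1$ cleanly and making sure the "sup of a closed-looking set" argument for $g(t^*)=0$ is airtight: continuity of $g$ gives $g(t^*) = \lim_{t\uparrow t^*} g(t)$ only if $t^* > 0$ is a limit from the left of points where $g=0$, so I would phrase it as: for every $\epsilon>0$ there is $s\le t^*$ with $g(s)=0$ and $|t^*-s|$ small, whence $g(t^*)\le g(t^*) - g(s) \le |\varphi(t^*)-\varphi(s)| \le |t^*-s|\cdot\normt{u-u'}$ by the $1$-Lipschitz property of $\dist_{\normt{\cdot}}(\cdot,\calA)$ and linearity of $\varphi$; letting $\epsilon\downarrow 0$ gives $g(t^*)=0$. (If $t^* = 0$ then $g(t^*)=g(0)=0$ directly.) Everything else is routine topology. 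Note that the hypothesis $\emptyset\ne\calA\subsetneqq\bbU$ guarantees $\calA^c\ne\emptyset$ so that $u'$ exists, and is used implicitly; the conclusion does not require $\calA$ convex, so Lemma~\ref{lem:dist_func}'s convexity clause is not needed here, only its Lipschitz clause.
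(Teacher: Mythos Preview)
Your proof is correct but takes a genuinely different route from the paper's. The paper argues by connectedness: after disposing of the trivial cases $\inter\calA=\emptyset$ or $\inter\calA^c=\emptyset$ (in which $u$ or $u'$ already lies in $\bdry\calA$), it observes that if $[u,u']\cap\bdry\calA=\emptyset$ then $[u,u']$ would be covered by the two disjoint nonempty open sets $\inter\calA$ and $\inter\calA^c$, contradicting the connectedness of the segment. Your approach is instead a constructive ``first exit time'' argument: you explicitly produce the boundary point as $\varphi(t^*)$ for a supremum $t^*$, and verify directly that $\varphi(t^*)\in\cl\calA\setminus\inter\calA$. Your method is more hands-on and avoids invoking connectedness as a black box (indeed it essentially reproves that $[0,1]$ is connected), at the cost of a slightly longer case analysis; you also route continuity of $g$ through Lemma~\ref{lem:dist_func}, which is a bit more machinery than strictly needed. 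The paper's argument is shorter and more conceptual, but both are standard and valid.
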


\begin{lemma} \label{lem:u_notin_domh*}
If $u\not\in \dom h^*$, then $\dist_{\normt{\cdot}_*}(u,\bar\calU)\ge \Delta$.
\end{lemma}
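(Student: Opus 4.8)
## Proof proposal for Lemma~\ref{lem:u_notin_domh*}

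The plan is to argue by contradiction, exploiting convexity of $\dom h^*$ (which holds since $h^*$ is a proper closed convex function) and the fact that $\bar\calU$ is a compact subset lying strictly inside $\dom h^*$. Suppose, for contradiction, that $u\notin\dom h^*$ yet $\dist_{\normt{\cdot}_*}(u,\bar\calU)<\Delta$. If $\dom h^*=\bbX^*$ then $\Delta=+\infty$ but also $u\in\dom h^*$, a contradiction, so we may assume $\dom h^*\subsetneqq\bbX^*$ and hence $\bdry\dom h^*\neq\emptyset$ and $\Delta=\dist_{\normt{\cdot}_*}(\bar\calU,\bdry\dom h^*)$ is a finite positive number (positivity by Lemma~\ref{lem:Delta}).

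First I would pick $\baru\in\bar\calU$ achieving $\normt{u-\baru}_*=\dist_{\normt{\cdot}_*}(u,\bar\calU)<\Delta$; such a $\baru$ exists because $\bar\calU$ is compact and $v\mapsto\normt{v-u}_*$ is continuous. Since $\baru\in\bar\calU\subseteq\dom h^*$ and $u\notin\dom h^*$, the segment $[\baru,u]$ has one endpoint in $\dom h^*$ and one endpoint in $(\dom h^*)^c$; by Lemma~\ref{lem:pass_through_boundary} (applied with $\calA=\dom h^*$) there exists a point $u''\in[\baru,u]\cap\bdry\dom h^*$. Writing $u''=(1-t)\baru+tu$ for some $t\in[0,1]$, we get
\begin{equation}
\normt{u''-\baru}_* = t\,\normt{u-\baru}_* \le \normt{u-\baru}_* < \Delta.
\end{equation}
But then, since $\baru\in\bar\calU$ and $u''\in\bdry\dom h^*$,
\begin{equation}
\Delta = \dist_{\normt{\cdot}_*}(\bar\calU,\bdry\dom h^*) \le \normt{\baru-u''}_* < \Delta,
\end{equation}
which is the desired contradiction. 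Hence $\dist_{\normt{\cdot}_*}(u,\bar\calU)\ge\Delta$.

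The only slightly delicate point is the degenerate case $t=0$, i.e.\ $u''=\baru$: this would force $\baru\in\bdry\dom h^*$, contradicting $\baru\in\bar\calU\subseteq\dom h^*=\inter\dom h^*$ (using Assumption~\ref{assum:open_dom}, under which $\dom h^*$ is open so $\bdry\dom h^*\cap\dom h^*=\emptyset$); so in fact $t>0$ and the estimate above is strict as written, though even the non-strict bound $t\le 1$ suffices. I do not anticipate any real obstacle here — the argument is a short geometric comparison — but the step requiring the most care is making sure Lemma~\ref{lem:pass_through_boundary} is applicable, which only needs $\emptyset\neq\dom h^*\subsetneqq\bbX^*$, already guaranteed in the case under consideration.
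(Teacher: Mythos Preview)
Your proof is correct and follows essentially the same approach as the paper: both pick a point of $\bar\calU$, invoke Lemma~\ref{lem:pass_through_boundary} to find a point of $\bdry\dom h^*$ on the connecting segment, and compare distances. The only cosmetic differences are that you argue by contradiction using a specific nearest point $\baru$, whereas the paper works directly with an arbitrary $u'\in\bar\calU$ and obtains $\Delta\le\normt{u-u'}_*$ for all such $u'$ before taking the infimum.
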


\begin{proof}[Proof of Lemma~\ref{lem:Ur}]
First note that $\bar\calU(r)$ is nonempty and bounded, since $\bar\calU$ is nonempty and bounded. By Lemma~\ref{lem:dist_func}, we know that $\bar\calU(r)$ is closed and convex. Therefore, $\bar\calU(r)$ is nonempty, convex and compact. Suppose that  $\bar\calU(r)\not\subseteq\dom h^*$ for some $0\le r<\Delta$, then there exists $u\in \bar\calU(r)$ such that $u\not\in \dom h^*$. By Lemma~\ref{lem:u_notin_domh*}, we have $\dist_{\normt{\cdot}_*}(u,\bar\calU)\ge \Delta$. 
However, since $u\in \bar\calU(r)$  and $r<\Delta$, we know that $\dist_{\normt{\cdot}_*}(u,\bar\calU)\le r< \Delta$. This leads to a contradiction.
\end{proof}


\begin{proof}[Proof of Lemma~\ref{lem:dist_func}]
For convenience, we omit the subscript $\normt{\cdot}$ in the distance function. 
Fix any $u,u'\in\bbU$. 
For any $a\in\calA$, we have 
\begin{equation}
\dist(u,\calA)\le \normt{u-a}\le \normt{u-u'} + \normt{u'-a},
\end{equation}
and hence $\dist(u,\calA)\le \normt{u-u'} + \dist(u',\calA)$, or equivalently,  $\dist(u,\calA) - \dist(u',\calA)\le \normt{u-u'} $. By swapping the role of $u$ and $u'$, we easily see that 
$\abst{\dist(u,\calA) - \dist(u',\calA)}\le \normt{u-u'} $. Now, suppose that $\calA$ is convex. For any $\epsilon>0$, there exist $a,a'\in\calA$ such that $\normt{u-a}\le \dist(u,\calA)+\epsilon$ and $\normt{u'-a'}\le \dist(u',\calA)+\epsilon$. Let us fix any $\lambda\in[0,1]$.  Since $\lambda a + (1-\lambda)a'\in\calA$, we have
\begin{align}
\dist(\lambda u + (1-\lambda)u',\calA) &\le \normt{(\lambda u + (1-\lambda)u') - (\lambda a + (1-\lambda)a')}\\
&\le \lambda \normt{u - a} + (1-\lambda)\normt{u' - a'}\\
& \le \lambda\dist(u,\calA) + (1-\lambda)\dist(u',\calA) + \epsilon. 
\end{align}
By letting $\epsilon\to 0$, we finish the proof. 
\end{proof}

\begin{proof}[Proof of  Lemma~\ref{lem:pass_through_boundary} ]
If $\inter\calA=\emptyset$ or $\inter\calA^c=\emptyset$, since $\bdry\calA = \bdry\calA^c$, we know that either $u$ or $u'$ lies in $\bdry\calA$, and the lemma trivially holds. Therefore, we focus on the case where both $\inter\calA$ and $\inter\calA^c$ are nonempty. If $[u,u']\cap\bdry\calA=\emptyset$, then $[u,u']$ is separated by  $\inter\calA$ and $\inter\calA^c$, namely, $\inter\calA\cap\inter\calA^c = \emptyset$, $\inter\calA\cap[u,u'] \ne \emptyset$, $\inter\calA^c\cap[u,u'] \ne \emptyset$ and $[u,u']\subseteq \inter\calA\cup\inter\calA^c$, and hence is disconnected. However,  $[u,u']$ is clearly path-connected, and hence connected. This leads to a contradiction. 
\end{proof}

\begin{proof}[Proof of Lemma~\ref{lem:u_notin_domh*}]
For any $u'\in \bar\calU\subseteq\dom h^*$, by Lemma~\ref{lem:pass_through_boundary}, there exists $\tilu\in[u,u']$ such that $\tilu \in \bdry \dom h^*$. Write $\tilu = \lambda u +(1-\lambda) u'$ for some $\lambda\in[0,1]$, and we have $\normt{\tilu-u'}_* = \lambda\normt{u-u'}_* \le \normt{u-u'}_*$, and hence $\Delta = \dist_{\normt{\cdot}_*}(\bdry \dom h^*,\bar\calU)\le \dist_{\normt{\cdot}_*}(\tilu,\bar\calU) \le \dist_{\normt{\cdot}_*}(u,\bar\calU). $
%
\end{proof}

\section{Proof of Lemma~\ref{lem:diamU} } \label{app:proof_diamU}

The proof of Lemma~\ref{lem:diamU} hinges upon the following lemma.

\begin{lemma} \label{lem:sup_gauge}
Let $\calC$ be nonempty and bounded such that $\calC\ne \{0\}$. Then $\sup_{x\in\calC}\, \gamma_\calC(x) = 1$, where $\gamma_\calC$ denotes the gauge function of $\calC$. 
\end{lemma}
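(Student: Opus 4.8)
The plan is to prove the two claims $\sup_{x\in\calC}\gamma_\calC(x)=1$ (Lemma~\ref{lem:sup_gauge}) and then $\diam_{\normt{\cdot}_\calU}(\calU)=1$ (Lemma~\ref{lem:diamU}) in that order, since the latter is an immediate application of the former to the set $\calC=\calU-\calU$.

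For Lemma~\ref{lem:sup_gauge}, recall $\gamma_\calC(x):=\inf\{\lambda>0:x/\lambda\in\calC\}$. First I would establish the upper bound $\sup_{x\in\calC}\gamma_\calC(x)\le 1$: if $x\in\calC$ then $x/1=x\in\calC$, so $1$ is an admissible $\lambda$ in the infimum, giving $\gamma_\calC(x)\le 1$. For the matching lower bound, I would argue that the supremum is actually attained arbitrarily close to $1$. Since $\calC\ne\{0\}$, pick $x_0\in\calC$ with $x_0\ne 0$. Boundedness of $\calC$ gives some $R>0$ with $\normt{x}\le R$ for all $x\in\calC$; hence for any $x\in\calC$ and any $\lambda$ with $x/\lambda\in\calC$ we have $\normt{x}/\lambda\le R$, i.e. $\lambda\ge\normt{x}/R$, so $\gamma_\calC(x)\ge\normt{x}/R>0$ whenever $x\ne 0$. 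Now consider the scalar multiples $t x_0$ for $t\in(0,\infty)$. Using positive homogeneity of the gauge, $\gamma_\calC(tx_0)=t\,\gamma_\calC(x_0)$. Let $t^*:=\sup\{t>0:tx_0\in\calC\}$, which is finite (by boundedness) and positive (since $x_0\in\calC$ and, if $\calC$ is a neighborhood of $0$ along that ray, even $>1$; in general $t^*\ge 1$ is not automatic, so I must be careful here). The cleanest route: I would instead show $\gamma_\calC(x)\to 1$ along a suitable sequence. Specifically, for $x\in\calC$, $\gamma_\calC(x)<1$ would force $x/\lambda\in\calC$ for some $\lambda<1$, i.e. $x=\lambda y$ with $y\in\calC$; conversely $\gamma_\calC(x)=1$ holds for $x$ on the ``boundary'' of $\calC$ in the radial sense. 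Pick $x_0\ne 0$ in $\calC$ and let $\bar t:=\sup\{t>0:tx_0\in\calC\}\in(0,\infty)$. For any $0<t<\bar t$, $tx_0\in\conv$-type reasoning is not needed — $tx_0\in\calC$ need not hold without convexity. So the honest statement requires $\calC$ star-shaped or convex; but note $\calC=\calU-\calU$ \emph{is} convex and symmetric, so for the application this is fine. I would therefore state Lemma~\ref{lem:sup_gauge} for convex $\calC$ (or add that hypothesis), and then: for $x_0\in\calC\setminus\{0\}$, by convexity and $0\in\calC$ (which holds since $\calC=\calU-\calU$), $tx_0\in\calC$ for all $t\in[0,\bar t]$ where $\bar t\ge 1$; then $\gamma_\calC(t x_0)=t\gamma_\calC(x_0)\to \bar t\,\gamma_\calC(x_0)$, and since $\bar t x_0$ lies in $\cl\calC=\calC$ (compact) with $\gamma_\calC(\bar t x_0)\le 1$ while no larger multiple is in $\calC$, one gets $\gamma_\calC(\bar t x_0)=1$. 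Hence $\sup_{x\in\calC}\gamma_\calC(x)\ge 1$, completing the equality.

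The main obstacle I anticipate is exactly this subtlety: \emph{without} convexity (or at least star-shapedness about $0$), the identity $\sup_{x\in\calC}\gamma_\calC(x)=1$ can fail, so I expect the actual paper either restricts to convex $\calC$ or exploits that the only use case is $\calC=\calU-\calU$, which is convex, compact, symmetric and contains $0$ in its interior. I would resolve it by invoking compactness of $\calC$ to guarantee the radial supremum $\bar t$ is attained (so $\bar t x_0\in\calC$) and convexity to guarantee $[0,\bar t]x_0\subseteq\calC$, pinning $\gamma_\calC(\bar t x_0)=1$ cleanly.

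For Lemma~\ref{lem:diamU}, apply Lemma~\ref{lem:sup_gauge} with $\calC:=\calU-\calU$, which is nonempty, convex, compact, symmetric about $0$, and $\ne\{0\}$ since $\inter\calU\ne\emptyset$. By definition $\normt{\cdot}_\calU=\gamma_{\calU-\calU}$, so
\begin{align}
\diam_{\normt{\cdot}_\calU}(\calU)={\max}_{u,u'\in\calU}\,\normt{u-u'}_\calU={\max}_{u,u'\in\calU}\,\gamma_{\calU-\calU}(u-u')={\sup}_{w\in\calU-\calU}\,\gamma_{\calU-\calU}(w)=1,\nonumber
\end{align}
where the third equality uses that $\{u-u':u,u'\in\calU\}=\calU-\calU$ and the last uses Lemma~\ref{lem:sup_gauge}. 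Compactness of $\calU-\calU$ plus continuity of $\gamma_{\calU-\calU}$ (finite convex function on a finite-dimensional space) justifies writing $\max$ for $\sup$, so the diameter is exactly $1$. $\qed$
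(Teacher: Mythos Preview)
Your proof of the application (Lemma~\ref{lem:diamU}) is fine, and your instinct to attack via radial scaling is right, but you have overcomplicated the lower bound in Lemma~\ref{lem:sup_gauge} and convinced yourself that convexity or closedness is needed when it is not. The lemma holds exactly as stated --- for any nonempty bounded $\calC\ne\{0\}$ --- and the paper proves it in two lines. The trick you missed: for $u\in\calC$ with $u\ne 0$, boundedness gives $\gamma_\calC(u)>0$, and by the very \emph{definition} of $\gamma_\calC(u)$ as an infimum there exists a sequence $\lambda_k\downarrow\gamma_\calC(u)$ with $u/\lambda_k\in\calC$. Positive homogeneity then gives $\gamma_\calC(u/\lambda_k)=\gamma_\calC(u)/\lambda_k\to 1$, so $\sup_{x\in\calC}\gamma_\calC(x)\ge 1$.

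Your detour through $\bar t=\sup\{t>0:tx_0\in\calC\}$ forces you to worry about whether $\bar t x_0\in\calC$ (needs closedness) and whether $[0,\bar t]x_0\subseteq\calC$ (needs star-shapedness about $0$). The paper's sequence $u/\lambda_k$ lies in $\calC$ \emph{by construction} --- it is extracted directly from the feasible set of the infimum defining the gauge --- so none of that apparatus is needed. In effect your $\bar t$ equals $1/\gamma_\calC(x_0)$, and the points $u/\lambda_k$ play exactly the role of $t_k x_0$ with $t_k\uparrow\bar t$, but without ever asking whether the limit point belongs to $\calC$.
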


\begin{proof}
For convenience, define $\zeta:= \sup_{x\in\calC}\, \gamma_\calC(x)$. 
Since  $\calC\ne \emptyset$ and  $\calC\ne \{0\}$, there exists $u\ne 0$ such that $u\in\calC$. Since $\calC$ is bounded, $\gamma_\calC(u)>0$, and there exists a positive sequence $\{\lambda_k\}_{k\ge 0}$ such that $\lambda_k\downarrow \gamma_\calC(u)$ and $u/\lambda_k\in \calC$ for all $k\ge 0$. Since $\gamma_\calC$ is positively homogeneous  (by definition),  we have $\zeta \ge \gamma_\calC(u/\lambda_k) = \gamma_\calC(u)/\lambda_k$ for $k\ge 0$. By taking limit, we have $\zeta\ge 1$. On the other hand, by definition, we have $\gamma_\calC(x)\le 1$ for all $x\in\calC$, and hence $\zeta\le 1$. This completes the proof. 
\end{proof}

Now, since $\calU$ is solid and compact, so is $\calU - \calU$. 
By Lemma~\ref{lem:sup_gauge},  we have 
\begin{align*}
\diam_{\normt{\cdot}_{\calU}}(\calU) &= {\max}_{u,u'\in\calU}\, \normt{u-u'}_\calU\\
 &= {\max}_{u,u'\in\calU}\; \gamma_{\calU - \calU}(u-u') = {\max}_{v\in\calU- \calU}\; \gamma_{\calU - \calU}(v) = 1. 
\end{align*}

\section{Proof of Proposition~\ref{lem:Lipschitz_extension} }\label{app:proof_Lipschitz_extension}

We first show that $F_L = f$ on $\calC$. Indeed, for any $z\in\calC$, by taking  $z'=z$ in~\eqref{eq:PH_env}, we have $F_L(z)\le f(z)$. On the other hand, for any $z,z'\in \calC$, since $f$ is $L$-Lipschitz on $\calC$,  we have 
\begin{equation}
f(z') + L\normt{z-z'}_*\ge f(z)\quad \Longrightarrow\quad F_L(z) = {\inf}_{z'\in \calC}\;  f(z') + L\normt{z-z'}_*\ge f(z).
\end{equation} 
Next, note that for any $z,v\in\bbY^*$, we have
\begin{align}
F_L(z)\le \textstyle \inf_{z'\in \bbY^*}\;  f_\calC(z') + L\normt{v-z'}_* + L\normt{z-v}_*  = F_L(v) + L\normt{z-v}_*. 
\end{align}
Therefore, $F_L$ is real-valued on $\bbY^*$ and $F_L(z)-F_L(v) \le  L\normt{z-v}_*$ for all $z,v\in\bbY^*$.  
This implies that $F_L$ is $L$-Lipschitz on $\bbY^*$. Finally, the convexity of $F_L$ follows from 
the joint convexity of the function $(z,z')\mapsto f_\calC(z') + L\normt{z-z'}_*$ on $\bbY^*\times\bbY^*$ (see e.g.,~\cite[Prop.~8.26]{Bauschke_11}). \qed

\section{Proof of Proposition~\ref{prop: partial_FL}} \label{app:proof_partial_FL}

 Since $f_\calC$ is proper  and convex and $(L\normt{\cdot}_{*})^* = \iota_{\calB_{\normt{\cdot}}(0,L)}$,  by~\cite[Theorem~16.4]{Rock_70}, we have 
\begin{equation}
F_L^*= (f_\calC\,\square\,L\normt{\cdot}_{*})^*=  f_\calC^* + \iota_{\calB_{\normt{\cdot}}(0,L)}. \quad \label{eq:FL_conj}
\end{equation}
 In addition, since $F_L$ is proper, closed and convex, we have for all $z\in\bbY^*, $
\begin{align}
F_L(z) = {\sup}_{y\in\bbY}\;  \ipt{u}{y} - F_L^*(y) = {\sup}_{\normt{y}\le L}\;  \big\{\psi_z(y):=\ipt{z}{y} - f_\calC^*(y)\big\}, 
\label{eq:dual_FL}
\end{align}
and $\partial F_L(z)= {\argmax}_{\normt{y}\le L}\, \psi_z(y)$. As a result, we have $\partial F_L(z)\subseteq \calB_{\normt{\cdot}}(0,L)$.
Note that since $F_L$ is globally convex and Lipschitz, $\partial F_L(z)\ne \emptyset$ for all $z\in\bbY^*$. 
Now, fix any $z\in\calC$. 
For all $g\in\partial F_L(z)$, 
\begin{align}
f_\calC(z') = F_L(z')\ge F_L(z) + \ipt{g}{z'-z} = f_\calC(z) + \ipt{g}{z'-z}, \quad\forall\,z'\in\calC,
\end{align}
and therefore $g\in \partial f_\calC(z)$. 
Thus 
we have $\partial F_L(z) \subseteq \partial f_\calC(z)$, 
 and hence 
$\partial F_L(z)\subseteq \partial f_\calC(z)\cap\calB_{\normt{\cdot}}(0,L)$. 
Next, take any $g\in \partial f_\calC(z)$ 
such that $\normt{g}\le L$, and we have 
\begin{equation}
F_L(z) \ge \psi_z(g) = \ipt{z}{g} - f_\calC^*(g) \eqa f_\calC(z) = F_L(z),  
\end{equation}
where (a) follows from~\cite[Theorem~23.5]{Rock_70} and that  $f_\calC$ is proper  and convex. 
As a result,  $F_L(z) = \psi_z(g)$ and hence $g\in \argmax_{\normt{y}\le L}\, \psi_z(y) $, 
which then implies that $g\in \partial F_L(z)$. Lastly, note that from~\cite[Theorem~23.8]{Rock_70}, we know that for all $z\in \bbY^*$, $\partial f(z) + \calN_\calC(z) \subseteq   \partial(f+\iota_\calC)(z) = \partial f_\calC(z)$, with equality holds if $\ri\dom f \cap\ri\calC\ne \emptyset$. 

\bibliographystyle{IEEEtr}
\bibliography{math_opt,mach_learn}

\end{document}